\title[Polynomial Wiener--Wintner]{Quantitative Polynomial Wiener--Wintner Theorems}
\author[Becker]{Lars Becker}
\address{Department of Mathematics, Princeton University, Fine Hall, Washington Road,
Princeton NJ, 08544-1000, USA}
\email{lbecker@math.princeton.edu}
\author[Jamneshan]{Asgar Jamneshan}
\address{Mathematical Institute, 
	University of Bonn,
	Endenicher Allee 60, 53115, Bonn,
	Germany}
\email{ajamnesh@math.uni-bonn.de}
\author[Thiele]{Christoph Thiele}
 \address{Mathematical Institute, 
	University of Bonn,
	Endenicher Allee 60, 53115, Bonn,
	Germany}
\email{thiele@math.uni-bonn.de}
\date{\today}
\theoremstyle{plain}
\newtheorem{theorem}{Theorem}
\newtheorem{lemma}[theorem]{Lemma}
\newtheorem{prop}[theorem]{Proposition}
\newtheorem{cor}[theorem]{Corollary}
\theoremstyle{definition}
\newtheorem{remark}[theorem]{Remark}
\numberwithin{theorem}{section}
\numberwithin{equation}{section}
\newcommand{\R}{\mathbb{R}}
\newcommand{\N}{\mathbb{N}}
\newcommand{\AD}{D}
\DeclareMathOperator{\supp}{\operatorname{supp}}
\begin{document}

\begin{abstract}
We prove quantitative polynomial Wiener--Wintner theorems in a very general setup, including measure-preserving actions of nilpotent Lie groups. Our results apply both to ergodic averages and to averages with singular integral weights. The proof relies on the generalized polynomial Carleson theorem developed in the companion paper by van Doorn, Srivastava, and the authors. 
\end{abstract}

\maketitle

\section{Introduction}
\subsection{Wiener--Wintner theorems}
Wiener and Wintner \cite{wiener-wintner} established the following refinement of Birkhoff's pointwise ergodic theorem. 

\begin{theorem}
\label{t:wiener-wintner}
    Let $(X,\nu,T)$ be a measure-preserving system, and let $f \in L^1(X,\nu)$. Then there exists a full measure set $X_f \subset X$ such that for every $\theta \in [0,2\pi]$ and $x \in X_f$, the averages
    \[
        \frac{1}{2N+1}\sum_{n=-N}^N e(\theta n)\, f(T^n x),
    \]
    converge as $N \to \infty$, where $e(t) \coloneqq e^{it}$. 
\end{theorem}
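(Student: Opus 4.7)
The plan is to reduce to the ergodic case and then split $f$ along the Kronecker factor, handling the two resulting pieces by essentially disjoint methods. By the ergodic decomposition it suffices to assume $T$ is ergodic on $(X,\nu)$. By truncation and density, it further suffices to prove the statement for $f \in L^2(X,\nu)$. Write $L^2 = K \oplus K^\perp$, where $K$ is the Kronecker factor, i.e.\ the closed subspace generated by the eigenfunctions of the Koopman operator associated to $T$, and decompose $f = f_K + f_K^\perp$.

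For the Kronecker part, I would first treat a single eigenfunction: if $T\phi = e(\alpha)\phi$ then $\phi(T^n x) = e(\alpha n)\phi(x)$, so the averages equal $\phi(x)$ times the character sum $(2N+1)^{-1}\sum_{|n|\leq N} e((\alpha+\theta)n)$, which converges for every $\theta \in [0,2\pi]$ (to $\phi(x)$ if $\theta = -\alpha$ mod $2\pi$ and to $0$ otherwise). This extends to finite linear combinations of eigenfunctions, and then to all of $K$ by passing to the $L^2$ limit, using the maximal function
\[
M_\ast g(x) \coloneqq \sup_{N\geq 1}\,\sup_{\theta}\, \left|\frac{1}{2N+1}\sum_{n=-N}^{N} e(\theta n)\, g(T^n x)\right|,
\]
which is bounded pointwise by the Birkhoff maximal function of $|g|$ and hence obeys a weak-type $(1,1)$ bound. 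A standard density argument then produces, for a.e.\ $x$, convergence simultaneously in every $\theta$.

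For $f_K^\perp$, the goal is the uniform Wiener--Wintner statement: for a.e.\ $x$,
\[
\sup_\theta \left|\frac{1}{2N+1}\sum_{n=-N}^N e(\theta n)\, f_K^\perp(T^n x)\right| \longrightarrow 0 \qquad (N\to\infty).
\]
Here I would apply van der Corput's inequality, which dominates the square of the average, uniformly in $\theta$, by a Cesaro average over shifts $h$ of the $\theta$-independent correlation sums $(2N+1)^{-1}\sum_n f_K^\perp(T^{n+h}x)\,\overline{f_K^\perp(T^n x)}$. Applying Birkhoff's theorem to each correlation and using that the spectral measure of $f_K^\perp$ is continuous (so that the correlation $\langle f_K^\perp, f_K^\perp\circ T^{-h}\rangle$ vanishes in Cesaro mean as $h\to\infty$) yields the desired uniform decay.

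The main obstacle is precisely this uniform-in-$\theta$ convergence for the weakly mixing component: Birkhoff's theorem applied for each fixed $\theta$ produces a null set $N_\theta$, and since $\theta$ ranges over an uncountable set one cannot naively take a union. The van der Corput trick resolves this in the classical linear setting by reducing matters to a single $\theta$-free Birkhoff estimate, but it is exactly this step that fails in the polynomial and singular-weight regimes studied in the present paper, and must be replaced by the generalized polynomial Carleson maximal inequality of the companion work.
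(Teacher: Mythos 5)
Your proof is correct, and it takes a genuinely different route from the paper. You give the classical argument: ergodic decomposition, reduction to $L^2$ via the double maximal inequality $M_\ast g \le M^{\mathrm{Birk}}|g|$, Kronecker factor splitting, a direct computation on eigenfunctions together with a density argument for $K$, and van der Corput plus Wiener's lemma for the continuous-spectrum part $f_K^\perp$. This is a complete and self-contained proof of the stated $L^1$ result. The paper, by contrast, does not prove Theorem~\ref{t:wiener-wintner} directly: it cites it to Wiener--Wintner and then subsumes it under Theorem~\ref{t:avg}, which is obtained from the generalized polynomial Carleson theorem of \cite{beckeretal} on homogeneous Lie groups via Calder\'on transference, together with variational truncation estimates and sparse bounds. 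Theorem~\ref{t:avg} only covers $p>1$; the paper notes that the qualitative $L^1$ statement follows by the maximal ergodic theorem and density, which is essentially the same reduction you already use. The tradeoff is clear: your argument is elementary and gives the sharp $L^1$ hypothesis, but it is soft — it gives no rate, no variational bound, and (as you correctly observe at the end) the van der Corput reduction to a $\theta$-free correlation sum has no analogue once the phases are genuinely polynomial or the weight is a singular kernel, which is exactly why the paper must replace it with a Carleson-type maximal inequality. Two small points worth tightening if you write this up: the density step for the Kronecker part should be phrased as an a.e.\ Cauchy estimate, $\sup_\theta\limsup_{N,M}|A_N(\theta,g)(x)-A_M(\theta,g)(x)|\le 2M_\ast(g-g_k)(x)$ for approximants $g_k$ with the WW property, rather than a bare limit-passing; and in the van der Corput step you should invoke Wiener's lemma together with Cauchy--Schwarz to pass from the Ces\`aro mean of $|\widehat{\sigma_f}(h)|^2$ to that of $|\widehat{\sigma_f}(h)|$, after intersecting the countably many Birkhoff full-measure sets over $h\in\mathbb{Z}$.
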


The essential feature of this result is that the exceptional set $X_f$ can be chosen \emph{uniformly} for all frequencies $\theta$, whereas Birkhoff's theorem only guarantees the existence of such sets for individual $\theta$. 

Define the $r$-variation of a function $a$ on the interval $(0,\infty)$ by
\[
    \|a\|_{V^r}
    := \sup_{N,\, 0 <t_0 < t_1 < \dots < t_N}
    \Big( \sum_{n=1}^N |a(t_n) - a(t_{n-1})|^r \Big)^{1/r}.
\]
In this paper we obtain the following variant of Theorem \ref{t:wiener-wintner}.

\begin{theorem}\label{t:main}
    Let $p\in(1,\infty]$, $r > 2$, and $d \ge 1$. Let $G$ be a homogeneous Lie group with Haar measure $\mu$, let $(X,\nu)$ be a probability space, and let $T \colon G \times X \to X$
    be a measure-preserving action of $G$ on $(X,\nu)$. For each $f \in L^p(X, \nu)$ there exists a set $X_f \subset X$ of full measure such that for all $x \in X_f$, 
    \begin{equation*}
        \sup_{Q} \Big\|\frac{1}{\mu(B(0,R))} \int_{B(0,R)} 
        f(T^g x)\, e(Q(g))\, \mathrm{d}\mu(g)\Big\|_{V^r_R} < \infty,
    \end{equation*}
     where the supremum is taken over all measurable Leibman polynomials $Q$ of degree at most $d$. 
\end{theorem}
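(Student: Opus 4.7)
The plan is to deduce the pointwise statement from an $L^p$ maximal--variational Carleson estimate on the group $G$ itself, supplied by the companion paper, via a Calder\'on-type transference, and then to pass from the $L^p$ bound to almost-everywhere finiteness in the standard way.

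More precisely, the companion paper provides, for $f \in L^p(G,\mu)$, an inequality of the form
\[
\Big\| \sup_P \Big\| \frac{1}{\mu(B(0,R))} \int_{B(0,R)} f(gh)\, e(P(h))\, \mathrm{d}\mu(h) \Big\|_{V^r_R} \Big\|_{L^p(G,\mu)}
\lesssim \|f\|_{L^p(G,\mu)},
\]
with the supremum over Leibman polynomials of degree at most $d$ and the implied constant depending only on $p$, $r$, $d$, $G$. This is the ``generalized polynomial Carleson theorem'' referenced in the abstract.

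For the transference, fix a large radius $S$ and, for each $x \in X$, set $F_x(g) := f(T^g x)\,\mathbf{1}_{B(0,S)}(g)$. Using the action property of $T$, the action-side average at $T^{g_0} x$ coincides, for $g_0 \in B(0, S/2)$ and scales $R \le S/2$, with a translated group-side average of $F_x$ at $g_0$. Because the class of Leibman polynomials of degree at most $d$ is invariant under translations on $G$, the supremum over $P$ is preserved under this identification. Applying the group inequality to $F_x$, integrating over $x \in X$ against $\nu$, invoking measure-preservation of $T$ to relate both sides to $L^p(X,\nu)$ norms---with the ratio of ball volumes absorbed into the implied constant via the uniform homogeneous doubling of $\mu$---and letting $S \to \infty$ (which monotonically recovers the full $V^r$ norm), one obtains
\[
\Big\| \sup_P \Big\| \frac{1}{\mu(B(0,R))} \int_{B(0,R)} f(T^g x)\, e(P(g))\, \mathrm{d}\mu(g) \Big\|_{V^r_R} \Big\|_{L^p(X,\nu)}
\lesssim \|f\|_{L^p(X,\nu)}.
\]
The endpoint $p = \infty$ is handled by a standard truncation-and-exhaustion argument. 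Taking $X_f$ to be the full-measure set where the bracketed expression is finite completes the proof.

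The main obstacle is the group-side inequality, which is the substance of the companion paper; the transference itself is essentially routine, its only substantive input being the translation-invariance of the Leibman polynomial class on the nilpotent group $G$, which is a standard algebraic fact.
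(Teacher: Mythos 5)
The macro-structure of your proposal matches the paper's: deduce an $L^p$ variational bound on the group, transfer to the measure-preserving system via Calder\'on's principle (using translation-invariance of the Leibman polynomial class), and conclude a.e.\ finiteness. The transference step and the final deduction are correct.

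The significant gap is in the first step. You treat the group-side $V^r$-variational estimate as something ``supplied by the companion paper,'' identifying it with the ``generalized polynomial Carleson theorem.'' That theorem (stated as Theorem~\ref{t:gen Carleson} in this paper) is a \emph{maximal} estimate: it bounds $\sup_{Q}\sup_{R_1<R_2}|\int_{R_1<\rho(x,y)<R_2}K(x,y)f(y)e(Q(y))\,\mathrm{d}\mu|$ in $L^q$, $1<q<2$. It gives neither the $V^r$-norm in the truncation parameter nor the full range $p\in(1,\infty)$. Upgrading this maximal bound to the uniform variational bound \eqref{e:hom lie1} is the principal content of Sections~\ref{s:SmoTrun}--\ref{s:sparse}: one first constructs, for each linearizing choice of truncation points and $\ell^{r'}$-weights, a new kernel $\mathbf{K}$ (or $\mathbf{A}$) that encodes a finite $r_-$-variation difference with a smooth cutoff, verifies that $\mathbf K$ is a one-sided $\beta$-kernel after paying a $\delta^{-\alpha}$ factor (Lemma~\ref{l:gamma}), and feeds it back into Theorem~\ref{t:gen Carleson}; the nontangential hypothesis \eqref{e:nontanbound} is supplied by Zorin-Kranich's variational non\-tangential estimate (Proposition~\ref{p:NontVar}); the smooth cutoffs are then traded for sharp ones by the pigeonholing Lemma that yields \eqref{e:epsilonvar} together with comparison Lemmas~\ref{l:sing_comp}, \ref{l:avg_comp}; and the range of exponents is extended from $(1,2)$ to $(1,\infty)$ via Lorist's sparse domination (Theorem~\ref{t:lorist} and Proposition~\ref{p:sparse}). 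None of this is routine, and none of it is in the companion paper, so the proposal as written is missing the bulk of the argument.

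Two smaller remarks. The paper treats $p=\infty$ simply by the inclusion $L^\infty(X,\nu)\subset L^p(X,\nu)$ on a probability space and applying the finite-$p$ result, rather than a ``truncation-and-exhaustion'' argument. And the transference from \eqref{e:hom lie1} to \eqref{eq ww avg hom} is indeed as you sketch, using $\mu(B(0,S))/\mu(B(0,S/2))=2^{\AD}$ to control the volume ratio uniformly in $S$ before letting $S\to\infty$.
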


We refer to Section \ref{ss:lie} for the definitions. Our methods also apply to averages with singular integral kernels as weights, see Theorem \ref{t:sing} below. 

Theorem \ref{t:main} generalizes the classical Wiener--Wintner theorem in three directions. Firstly, it allows for measure-preserving actions of homogeneous Lie groups (such groups are always nilpotent). Secondly, it allows all polynomial, rather than just linear, phases. Thirdly, it strengthens Theorem \ref{t:wiener-wintner} by giving finiteness of the $r$-variation in the parameter $R$, thus quantifying the rate of convergence uniformly in the polynomial phase. 

Theorem \ref{t:main} follows by the Calderón transference principle from a polynomial Carleson theorem on homogeneous Lie groups with an additional variation norm in radial truncations of the kernel, see Corollary \ref{c:hom Lie} below. The group properties play only a secondary role in the proof.  What is relevant is the structure of the group as a doubling metric measure space. For this reason, we now state a more general main theorem in that setup.

\subsection{Main result on metric measure spaces}\label{ss:setup results} 

In what follows we work in a slightly less general setup than in the companion publication \cite{beckeretal}, whose main result we will apply to prove our main theorem. We detail the differences in Section \ref{s:SmoTrun}.

\subsubsection{Metric measure space}
A $\AD$-dimensional metric measure space $(X,\rho,\mu)$ is a complete metric space $(X,\rho)$ equipped with a Borel measure $\mu$ satisfying for all  $x \in X$ and $R > 0$, the condition
\begin{equation}\label{e:ADregular}
    \mu(B(x,R)) = C R^{\AD}.
\end{equation}

\subsubsection{Modulation functions}
A collection $\mathcal{Q}$ of real-valued continuous functions on $(X,\rho,\mu)$ is called compatible if the following conditions are satisfied. For any ball $B$ in $X$ and $f, g \in \mathcal{Q}$, denote 
\begin{equation}\label{e:osccontrol}
    d_B(f,g) := \sup_{x,y\in B} |f(x)-f(y)-g(x)+g(y)|. 
\end{equation}
\begin{itemize}
    \item[(1)] There exists $x_0 \in X$ with $Q(x_0) = 0$ for all $Q \in \mathcal{Q}$.
    \item[(2)] For any ball $B$ the function $d_B$ is a metric on $\mathcal{Q}$. 
    \item[(3)] For any balls $B_1 = B(x_1,R)$ and $B_2 = B(x_2,2R)$ with $x_1 \in B_2$ 
    \begin{equation}\label{e:firstdb}
        d_{B_2} \le C d_{B_1}\,.
    \end{equation}
    \item[(4)] For any balls $B_1 = B(x_1,R)$, $B_2 = B(x_2, C R)$ with $B_1 \subset B_2$ 
    \begin{equation}\label{e:seconddb}
        2\, d_{B_1} \le d_{B_2}\,.
    \end{equation}
    \item[(5)] For any ball $B$ and every $d_B$-ball $\widetilde{B}$ of radius $2R$ in $\mathcal{Q}$, there exists a collection of at most $C$ many $d_B$-balls of radius $R$ covering $\widetilde{B}$.
\end{itemize}

\subsubsection{Cancellation} A compatible collection $\mathcal{Q}$ is called $\varepsilon$-cancellative if for every ball $B$ in $X$ of radius $R$, every Lipschitz function $\varphi \colon X \to \mathbb{C}$ supported in $B$, and all $f,g \in \mathcal{Q}$,
\begin{equation}\label{e:vdc cond}
    \left|\int_B e((f-g)(x))\varphi(x)\, \mathrm{d}\mu\right|
    \le C R^{\AD}\, \|\varphi\|_{C^{0,1}(B)}\, (1 + d_B(f,g))^{-\varepsilon},
\end{equation}
where $\|\cdot\|_{C^{0,1}(B)}$ is the inhomogeneous Lipschitz norm normalized as follows 
\[
    \|\varphi\|_{C^{0,1}(B)} =  \sup_{x \in B} |\varphi(x)| +   R \sup_{\substack{x,y \in B \\ x \neq y}} \frac{|\varphi(x) - \varphi(y)|}{\rho(x,y)}\,.
\]
\subsubsection{Singular integral kernels}
For $0<\alpha\leq 1$, an $\alpha$-kernel $K$ on $X$ is a measurable function
\begin{equation*}
    K \colon X \times X \to \mathbb{C}
\end{equation*}
such that for all $x, y', y \in X$ with $x \neq y$
\begin{equation}\label{e:kernel size}
    |K(x,y)| \le \rho(x,y)^{-\AD}\,,
\end{equation}
and if $2\rho(y,y') \le \rho(x,y)$, then
\begin{equation}\label{e:kernel y smooth}
    |K(x,y) - K(x,y')| + |K(y,x) - K(y',x)|
    \le
    \left(\frac{\rho(y,y')}{\rho(x,y)}\right)^{\alpha}
    \rho(x,y)^{-\AD}\,.
\end{equation}
The kernel $K$ satisfies the cancellation condition if for all $x \in X$ and $0 < R_1 < R_2$,
\begin{equation}\label{e:CZcancellation}
    \int_{B(x,R_2) \setminus B(x,R_1)} K(x,y) \, \mathrm{d}\mu(y)=
    \int_{B(x,R_2) \setminus B(x,R_1)} K(y,x) \, \mathrm{d}\mu(y) = 0.
\end{equation}

\subsubsection{Statement of the main result}

We fix a $\AD$-dimensional metric measure space $(X, \rho, \mu)$ and a compatible $\varepsilon$-cancellative collection $\mathcal{Q}$ on $X$. The constants in the below statements may depend on this data.

We consider truncated, modulated singular integrals 
\begin{equation}\label{e:trsdef}
    S_u(K, Q, f)(x)
    := \int_{\rho(x,y) > u} K(x,y)\, f(y)\, e(Q(y))\, \mathrm{d}\mu(y),
\end{equation}
and truncated, modulated averages 
\begin{equation}\label{e:tradef}
    A_R(Q, f)(x)
    := \frac{1}{\mu(B(x,R))} \int_{B(x,R)} f(y)\, e(Q(y))\, \mathrm{d}\mu(y).
\end{equation}
Our most general result is a variational estimate, uniform in the modulation, for both averages and singular integrals. 

\begin{theorem}\label{t:metric}
Let $p \in (1,\infty)$, $r>2$, and $0<\alpha\leq 1$. Then there exists a constant $C > 0$ such that for all $f\in L^p(X,\mu)$, we have 
\begin{equation}\label{e:thm ShTru2}
    \left\| \sup_{Q \in \mathcal{Q}}
    \bigl\| A_R(Q, f) \bigr\|_{V^r_R} \right\|_p
    \le C \|f\|_p.
\end{equation}
Moreover, for every $\alpha$-kernel $K$ satisfying the cancellation condition \eqref{e:CZcancellation} it holds for all $f\in L^p(X,\mu)$ that 
\begin{equation}\label{e:thm ShTru1}
    \left\| \sup_{Q \in \mathcal{Q}}
    \bigl\| S_u(K, Q, f) \bigr\|_{V^r_u} \right\|_p
    \le C \|f\|_p.
\end{equation}
\end{theorem}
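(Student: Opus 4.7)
My plan is to deduce both variational estimates from their smoothly truncated analogs, which are the main result of the companion paper \cite{beckeretal} (as signaled by the reference to Section \ref{s:SmoTrun}). Fix a Lipschitz bump $\phi\colon[0,\infty)\to[0,1]$ that equals $1$ on $[0,1/2]$ and is supported in $[0,1]$, and let $A_R^\phi$ and $S_u^\phi$ denote the operators obtained from \eqref{e:tradef} and \eqref{e:trsdef} by replacing $\mathbf{1}_{B(x,R)}$ and $\mathbf{1}_{\rho(x,y)>u}$ with $\phi(\rho(x,y)/R)$ and $1-\phi(\rho(x,y)/u)$, respectively. The companion paper supplies
\[
    \Bigl\|\sup_{Q\in\mathcal{Q}} \|A_R^\phi(Q,f)\|_{V^r_R}\Bigr\|_p \lesssim \|f\|_p,
    \qquad
    \Bigl\|\sup_{Q\in\mathcal{Q}} \|S_u^\phi(K,Q,f)\|_{V^r_u}\Bigr\|_p \lesssim \|f\|_p,
\]
so the entire task reduces to controlling the sharp-minus-smooth difference in the $L^p(\sup_Q V^r)$ norm.

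To treat $A_R - A_R^\phi$, I decompose $\mathbf{1}_{B(x,R)} - \phi(\rho(x,\cdot)/R)$ as a telescoping sum of Lipschitz bumps $\psi_k$ supported in dyadic annuli of thickness $\sim 2^{-k}R$ just inside $\partial B(x,R)$, with normalized norm $\|\psi_k\|_{C^{0,1}(B(x,R))}\lesssim 2^k$. Each corresponding operator
\[
    E_R^{k}(Q,f)(x) := \frac{1}{\mu(B(x,R))}\int \psi_k(\rho(x,y)/R)\, f(y)\, e(Q(y))\, \mathrm{d}\mu(y)
\]
admits two competing estimates: a trivial pointwise bound $|E_R^k(Q,f)|\lesssim M|f|$ where $M$ is the Hardy--Littlewood maximal operator on $(X,\rho,\mu)$, and an oscillatory bound from the $\varepsilon$-cancellative property \eqref{e:vdc cond} of shape $\lesssim 2^k(1+d_{B(x,R)}(Q,0))^{-\varepsilon}$, which is obtained after localizing $f$ to sub-balls inside the annulus and treating the local averages as effectively constant Lipschitz amplitudes. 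Partitioning $\mathcal{Q}$ into $d_{B(x,R)}$-balls of a carefully chosen $k$-dependent radius via the covering property (5), and interpolating between the two bounds on each piece, produces a geometric gain in $k$ for the supremum in $Q$.

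For the variation in $R$, I observe that each $E_R^k$ is pointwise comparable to a non-modulated smoothly truncated operator at scale $R$, whose $V^r$-norm in $R$ (for $r>2$) is controlled by the Lépingle and jump-inequality machinery available on doubling metric measure spaces. Summing in $k$ and combining with the companion paper's bound for $A_R^\phi$ yields \eqref{e:thm ShTru2}. An entirely parallel argument---this time analyzing the single annulus near the truncation radius $u$ and exploiting the kernel cancellation \eqref{e:CZcancellation} together with the regularity \eqref{e:kernel y smooth} to pass the oscillation bound through the Calderón--Zygmund structure---transfers the companion paper's estimate for $S_u^\phi$ to the sharp version $S_u$, giving \eqref{e:thm ShTru1}.

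The hard part will be the interpolation step where the Lipschitz growth $2^k$ of $\psi_k$ must be absorbed by the $\varepsilon$-cancellative gain from \eqref{e:vdc cond}. Executing this uniformly in $x$ while preserving the $V^r$-structure in $R$ requires coordinating three different scales---the annular dyadic index $k$, the radius of the $d_B$-ball partition of $\mathcal{Q}$, and the truncation parameter itself---and relies crucially on property (5). Any loss in the interpolation exponent there propagates to a failure of geometric convergence in $k$, so this balance is the central technical point; verifying it along the lines sketched above is what I expect to occupy the bulk of the argument.
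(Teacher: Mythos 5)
Your argument has three genuine gaps, and the central one is the starting point. You assume the companion paper \cite{beckeretal} already supplies the variational estimate with smooth cutoffs,
\[
\bigl\|\sup_{Q\in\mathcal{Q}}\|A_R^\phi(Q,f)\|_{V^r_R}\bigr\|_p\lesssim\|f\|_p.
\]
It does not. Theorem \ref{t:gen Carleson} (the companion paper's main result) is a maximally truncated, maximally modulated bound with \emph{no} variation norm. Producing the variational smooth-cutoff estimate is itself the substance of Proposition \ref{p:SmoTrun}: one linearizes the $V^r$ norm via a measurable selection of stopping scales and weights $w\in\ell^{r'}$, absorbs those weights into a new kernel $\mathbf{K}$, and then verifies (Lemma \ref{l:gamma}) that $\mathbf{K}$ is still a one-sided $\beta$-kernel with $\beta=\min\{1/2,\alpha\}$ at the cost of a factor $\delta^{-\alpha}$, so Theorem \ref{t:gen Carleson} applies to $\mathbf{K}$. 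That $\delta^{-\alpha}$ loss is structural, not a convenience; it is the lever that the rest of the argument pushes against.

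Second, even granting a smooth variational bound, your dyadic-annular telescoping $\mathbf{1}_{B(x,R)}-\phi(\rho(x,\cdot)/R)=\sum_k\psi_k$ does not appear to close, and you acknowledge this yourself. The $\varepsilon$-cancellative gain $(1+d_B(Q,0))^{-\varepsilon}$ is useless uniformly in $Q$ (the supremum includes $Q$ with small $d_B$-oscillation), and the trivial bound for $E_R^k$ carries no decay in $k$ once you take the $V^r$ norm in $R$: the $V^r$ bound for an average over an annulus of thickness $2^{-k}R$ degrades as the annulus thins, just as the paper's bound degrades like $\delta^{-\alpha}$. The paper sidesteps this entirely. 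It does not sum over annular scales; it uses a single $\delta$-dependent cutoff and couples it to a dyadic pigeonholing of the variation norm (reducing to $J_0$ jumps, with a gain $J_0^{-\varepsilon}$), paying only a one-shot $\delta^{1-1/s}(M|f|^s)^{1/s}$ error per jump for the sharp-versus-smooth comparison (Lemma \ref{l:sing_comp}). Choosing $\delta=J_0^{-\nu}$ with $\nu,\beta$ tuned to $r$ and $p$ then balances the two losses against the two gains in \eqref{e:optimize}. Your scheme has two small parameters ($k$ and the $d_B$-ball radius) but no third quantity with which to balance the $2^k$ Lipschitz loss, so the geometric convergence you need is not available.

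Third, every input you invoke, and the paper's own Proposition \ref{p:SmoTrun}, is restricted to $p\in(1,2)$, since Theorem \ref{t:gen Carleson} only gives $L^q$ bounds for $q\in(1,2]$. The theorem claims $p\in(1,\infty)$. The paper bridges this via the sparse domination result of Lorist (Theorem \ref{t:lorist}), which requires the separate maximal-function control of Proposition \ref{p:sparse}. This step is entirely absent from your outline.
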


\subsection{Homogeneous Lie groups}
\label{ss:lie}

We now give the relevant definitions for homogeneous Lie groups\footnote{Background on homogeneous Lie groups can be found in e.g.~\cite{bonfiglioli,ledonne-book}.} which are our primary objects of interest. 

A homogeneous Lie group is a connected, simply connected Lie group whose Lie algebra is endowed with a family of automorphic dilations. Every homogeneous Lie group is isomorphic, as a Lie group, to a group of the form $(\mathbb{R}^n, \circ, \{\delta_\lambda\}_{\lambda>0})$, where the group law is given by
\[
x \circ y = x + y + P(x,y),
\]
with $P$ a polynomial map. The dilations form a one-parameter family of automorphisms of the form
\[
\delta_\lambda(x_1,\ldots,x_n)
    \coloneqq (\lambda^{\sigma_1} x_1, \ldots, \lambda^{\sigma_n} x_n),
\]
for some real numbers $1 \leq \sigma_1 \leq \cdots \leq \sigma_n$. The quantity
\[
\AD = \sum_{i=1}^n \sigma_i
\]
is called the homogeneous dimension of the group.

Every homogeneous Lie group is nilpotent and its Haar measure coincides with the Lebesgue measure $\mu$ on $\mathbb{R}^n$. By \cite[Theorem~2]{hebisch1990smooth}, there exists $\epsilon > 0$, depending on the group, such that the function
\begin{equation}\label{e:def homgp rho}
    \rho(x,y) = \inf\{\lambda > 0 : \delta_{\lambda^{-1}}(x \circ y^{-1}) \in U_\epsilon\},
\end{equation}
with
\[
U_\epsilon := \{x \in \mathbb{R}^n : \sum_{j=1}^n x_j^2 < \epsilon^2\},
\]
defines a right-invariant metric on the group. Moreover, the unit ball $B(0,1)$ with respect to $\rho$ coincides with the Euclidean ball $U_\epsilon$. 

On homogeneous Lie groups, we consider measurable Leibman polynomials \cite{leibman} as the modulation functions. For the convenience of the reader, we recall their definition. Let $G,H$ be groups and let $f\colon G\to H$ be a function. For $g\in G$, define the discrete derivative
\[
\Delta_g f(g')\coloneqq f(g'g^{-1}) \cdot f(g')^{-1}. 
\]
Then $f$ is said to be a polynomial map of degree $0$ if $\Delta_g f\equiv 1$ for all $g\in G$, and for $d\geq 1$, a polynomial map of degree $d$ if $\Delta_g f$ is polynomial of degree $d-1$ for all $g\in G$.

If $G,H$ are locally compact Polish groups, then it follows from \cite[Theorem 1.5]{auto-cont} that a measurable Leibman polynomial map between $G,H$ is automatically continuous. 
Moreover, real-valued, continuous Leibman polynomial maps on a homogeneous Lie group $(\mathbb{R}^n, \circ, \{\delta_\lambda\}_{\lambda>0})$ are precisely the homogeneous polynomials/polynomials in exponential coordinates (see \cite[Corollary 1.4, Remark 4.2]{ledonne-poly}, \cite[Remark 6.2]{poly-cohomology}). In particular, they are classical polynomial functions of potentially higher classical degree.

In Section \ref{s:lie}, we verify that all homogeneous Lie groups and their measurable Leibman polynomials satisfy the assumptions of Section \ref{ss:setup results}. Hence, we have the following consequence of Theorem \ref{t:metric}. 
 \begin{cor}\label{c:hom Lie}
Let $p \in (1,\infty)$, $r>2$, $0 < \alpha \le 1$, and $d\geq 1$. Let $G$ be a homogeneous Lie group with Haar measure $\mu$, and let $\mathcal{Q}$ be the collection of real-valued  measurable Leibman polynomial maps on $G$ of degree at most $d$. 
There exists a constant $C  > 0$ such that the following holds. For all $f \in L^p(G, \mu)$,
\begin{equation}\label{e:hom lie1}
    \left\| \sup_{Q \in \mathcal{Q}}
    \bigl\| A_R(Q, f) \bigr\|_{V^r_R} \right\|_p
    \le C \|f\|_p.
\end{equation}
Moreover, if $K$ is an $\alpha$-kernel on $G$ satisfying the cancellation condition~\eqref{e:CZcancellation}, then for all $f \in L^p(G,\mu)$,
\begin{equation}\label{e:hom lie2}
    \left\| \sup_{Q \in \mathcal{Q}}
    \bigl\| S_u(K, Q, f) \bigr\|_{V^r_u} \right\|_p
    \le C \|f\|_p.
\end{equation}
\end{cor}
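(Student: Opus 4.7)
The plan is to deduce Corollary \ref{c:hom Lie} from Theorem \ref{t:metric} by verifying that $(G,\rho,\mu)$ is a $\AD$-dimensional metric measure space and that the collection $\mathcal{Q}$, after normalization at the identity, is compatible and $\varepsilon$-cancellative.

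First I would replace $\mathcal{Q}$ by the subcollection $\mathcal{Q}_0$ of Leibman polynomials $Q$ of degree at most $d$ with $Q(0)=0$. Adding a real constant $c$ to $Q$ multiplies the integrand $e(Q(y))$ by the unimodular scalar $e(c)$, so it leaves both $\|A_R(Q,f)\|_{V^r_R}$ and $\|S_u(K,Q,f)\|_{V^r_u}$ unchanged; hence the suprema over $\mathcal{Q}$ and $\mathcal{Q}_0$ coincide. The $\AD$-regularity \eqref{e:ADregular} then follows from unimodularity of $G$ (every nilpotent Lie group is unimodular) together with the scaling $\mu(\delta_\lambda E) = \lambda^{\AD}\mu(E)$ and the identity $B(x,R) = \delta_R(U_\epsilon) \circ x$, giving $\mu(B(x,R)) = \mu(U_\epsilon)\,R^{\AD}$.

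Next I would verify the compatibility axioms (1)--(5) for $\mathcal{Q}_0$. Axiom (1) is built in; and for $f,g \in \mathcal{Q}_0$, the quantity $d_B(f,g)$ equals the oscillation $\sup_{x,y\in B}|h(x)-h(y)|$ of $h := f-g$, which is itself a real-valued Leibman polynomial of degree $\le d$ and hence, by \cite{ledonne-poly,poly-cohomology}, a classical polynomial in the Euclidean coordinates of bounded classical degree $D=D(d,G)$. In particular, $\mathcal{Q}_0$ is finite-dimensional. Right-translation by $x_0^{-1}$ composed with the dilation $\delta_{R^{-1}}$ maps $B(x_0,R)$ to $U_\epsilon$ and pulls any Leibman polynomial back to another of the same Leibman-degree; this reduces axioms (2)--(4) to the equivalence of the norms $h\mapsto \sup_{u,v\in U_\epsilon}|h(u)-h(v)|$ and $h\mapsto \sup_{u,v\in B(0,C)}|h(u)-h(v)|$ on the finite-dimensional space of classical polynomials of degree $\le D$, valid with constants depending only on $d$ and $G$. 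For axiom (4) one takes the constant $C$ large enough that the larger-ball oscillation beats the smaller by a factor of at least $2$; this is possible because shrinking a ball by a factor of $C$ scales the oscillation of a fixed polynomial of degree $\le D$ by at most a constant multiple of $C^{-1}$. Axiom (5) is the standard doubling of metric balls in a finite-dimensional normed space.

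The main step, which I expect to be the principal obstacle, is the $\varepsilon$-cancellation \eqref{e:vdc cond}. Using the same right-translation and dilation, the required inequality for $h=f-g$ and a Lipschitz $\varphi$ supported in $B=B(x_0,R)$ reduces to
\[
    \left|\int_{U_\epsilon} e(h(u))\,\varphi(u)\,\mathrm{d}\mu(u)\right|
    \lesssim \|\varphi\|_{C^{0,1}(U_\epsilon)}\bigl(1+\sup_{u,v\in U_\epsilon}|h(u)-h(v)|\bigr)^{-\varepsilon}
\]
for real-valued classical polynomials $h$ of degree $\le D$ on $\R^n$. This is a standard polynomial van der Corput estimate (cf.\ Stein, \emph{Harmonic Analysis}), valid with some $\varepsilon = \varepsilon(D,n)>0$; the oscillation functional on the right-hand side is a norm on the finite-dimensional space of such polynomials, which lets one bootstrap the classical decay in any fixed coefficient norm to decay in the oscillation. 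Unwinding the rescaling recovers \eqref{e:vdc cond} on the original ball $B$ with the correct factor $R^{\AD}$. Once all hypotheses are in place, Theorem \ref{t:metric} applied to $\mathcal{Q}_0$ yields \eqref{e:hom lie1} and \eqref{e:hom lie2}.
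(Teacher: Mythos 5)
Your proposal follows the same overall route as the paper: normalize to unital polynomials, verify $\AD$-regularity, establish compatibility (1)--(5) via finite-dimensionality of the space of Leibman polynomials of bounded degree together with translation/dilation invariance, and then prove cancellation by a polynomial van der Corput estimate. The first three steps are essentially the paper's argument. However, there is a genuine gap in the cancellation step.

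The cancellation hypothesis \eqref{e:vdc cond} must hold for all test functions $\varphi$ that are Lipschitz with respect to the \emph{homogeneous} metric $\rho$, and the right-hand side involves the homogeneous Lipschitz norm $\|\varphi\|_{C^{0,1}(B)}$. The inequality you write at unit scale,
\begin{equation*}
\Bigl|\int_{U_\epsilon} e(h(u))\,\varphi(u)\,\mathrm{d}\mu(u)\Bigr| \lesssim \|\varphi\|_{C^{0,1}(U_\epsilon)}\bigl(1+\textstyle\sup_{u,v}|h(u)-h(v)|\bigr)^{-\varepsilon},
\end{equation*}
is \emph{not} a standard Euclidean van der Corput estimate when $\|\cdot\|_{C^{0,1}}$ is the $\rho$-Lipschitz norm: the classical estimate (cf.\ \cite[Lemma A.1]{zk-polynomial} or Stein) controls the oscillatory integral by Euclidean moduli of continuity, i.e.\ by $\sup_{|y|\le\eta}\|\varphi-\varphi(\cdot-y)\|_1$. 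A $\rho$-Lipschitz $\varphi$ need not be Euclidean Lipschitz (it is only Euclidean H\"older with some exponent $<1$ when the dilation weights $\sigma_i>1$), so the standard estimate gives something weaker than what \eqref{e:vdc cond} demands. The paper bridges this with Lemma~\ref{l:abel_holder}, which shows that a small Euclidean translation $x\mapsto x-y$ moves points by at most $C\rho(e,y)^{1/\AD}$ in the homogeneous metric; this is precisely the input that converts the Euclidean translation-modulus coming out of van der Corput into a bound by the $\rho$-Lipschitz norm, and it is where the extra power $1/\AD$ in the final exponent $\varepsilon=1/(\AD d_\R)$ originates. Your write-up omits this step entirely, and without it the claimed decay in the $\rho$-Lipschitz norm does not follow from the ``standard'' estimate you invoke.

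Two minor points: for axiom (4) you assert a scaling heuristic for concentric balls, but the axiom involves $B_1\subset B_2$ with different centers; the paper handles this via Lagrange interpolation combined with axiom (3), whereas your sketch would need an additional compactness argument. Also, when you unwind the rescaling you should confirm that both sides of the van der Corput estimate transform with the same power of $R$ under the homogeneous dilations, which the paper ensures by using the dilation-invariance of $\rho$ throughout.
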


Estimate \eqref{e:hom lie1} implies by the Calderón transference principle \cite{MR227354} the following quantitative version of Theorem \ref{t:main}, see Appendix \ref{transference}. 

\begin{theorem}\label{t:avg}
    Let $G$ be a homogeneous Lie group with Haar measure $\mu$, and let $p \in (1,\infty)$, $r > 2$, and $d \ge 1$. There exists a constant $C > 0$ such that the following holds. Let $(X,\nu)$ be a probability space and let $T \colon G \times X \to X$ be a measure-preserving action of $G$ on $(X,\nu)$. For each $f \in L^p(X, \nu)$,
    \begin{equation*}
        \Big\| \sup_{Q} \Big\|\frac{1}{\mu(B(0,R))} \int_{B(0,R)} 
        f(T^g x)\, e(Q(g))\, \mathrm{d}\mu(g)\Big\|_{V^r_R} \Big\|_{p} 
    \end{equation*}
    \begin{equation}\label{eq ww avg hom}
        \le C \|f\|_{p},
    \end{equation}
    where the supremum is taken over all measurable real-valued Leibman polynomials $Q$ of degree at most $d$. 
\end{theorem}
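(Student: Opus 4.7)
The plan is to deduce \eqref{eq ww avg hom} from estimate \eqref{e:hom lie1} of Corollary \ref{c:hom Lie} via the Calderón transference principle. For $f \in L^p(X,\nu)$ and $x \in X$, introduce the orbit function $F_x\colon G \to \mathbb{C}$ defined by $F_x(g) := f(T^g x)$. The right-invariance of $\rho$ (which gives $B(g,R) = B(0,R)\cdot g$) together with the unimodularity of $G$ makes the change of variables $z \mapsto zg$ yield
\[
A_R(Q, F_x)(g) \;=\; A_R(Q_g, F_{T^g x})(0), \qquad Q_g(z):=Q(zg).
\]
A key algebraic point is that right-translation of the argument preserves Leibman polynomials of degree at most $d$, as follows by induction on $d$ from the identity $\Delta_h Q_g = (\Delta_{g^{-1}hg}Q)_g$; hence $Q \mapsto Q_g$ is a bijection of $\mathcal{Q}$ for every $g$. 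Taking the supremum over $Q \in \mathcal{Q}$ and the $V^r_R$-norm produces the pointwise identity
\[
\sup_{Q \in \mathcal{Q}} \bigl\|A_R(Q, F_x)(g)\bigr\|_{V^r_R} \;=\; \Psi(T^g x),
\]
where $\Psi(y) := \sup_{P \in \mathcal{Q}} \|A_R(P, F_y)(0)\|_{V^r_R}$ is precisely the pointwise quantity on the left-hand side of \eqref{eq ww avg hom}.

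Next I turn this pointwise identity into an $L^p(X,\nu)$ bound by spatially truncating. Fix $R_0 > 0$ and $N \gg R_0$, and let $F_x^{N,R_0} := F_x \cdot \mathbf{1}_{B(0,N+R_0)}$. Since $B(g,R) \subset B(0,N+R_0)$ whenever $g \in B(0,N)$ and $R < R_0$, the truncation does not affect $A_R(Q, F_x)(g)$ in this range. Write $\Psi_{R_0}$ for the variant of $\Psi$ with the $V^r_R$-norm restricted to $R < R_0$. Applying \eqref{e:hom lie1} to $F_x^{N,R_0}$, restricting the outer integral on the left to $g \in B(0,N)$ and the inner variation norm to $R < R_0$, integrating in $x$, and using Fubini together with the measure-preservation of $T$ yields
\[
\mu(B(0,N))\int_X \Psi_{R_0}(x)^p\,\mathrm d\nu(x) \;\le\; C^p\,\mu(B(0,N+R_0))\,\|f\|_{L^p(X,\nu)}^p.
\]
The Ahlfors regularity $\mu(B(0,R)) = cR^{\AD}$ allows me to send $N \to \infty$ with the ratio $\mu(B(0,N+R_0))/\mu(B(0,N)) = (1+R_0/N)^{\AD} \to 1$; monotone convergence in $R_0 \to \infty$ then gives \eqref{eq ww avg hom}.

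The main technical point is the algebraic closure of $\mathcal{Q}$ under $Q \mapsto Q_g$; everything else is a standard transference computation. A minor further concern is the measurability of the supremum over $\mathcal{Q}$, which is handled by restricting to a countable dense subset of the finite-dimensional coefficient space of Leibman polynomials of degree at most $d$ and appealing to the lower semi-continuity of the $V^r$ norm.
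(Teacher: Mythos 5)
Your proposal is correct and is exactly the Calder\'on transference argument that the paper cites for this theorem without supplying details; the one non-routine ingredient you correctly identify and verify is that the class of Leibman polynomials of degree at most $d$ is closed under right-translation of the argument, via the identity $\Delta_h Q_g = (\Delta_{g^{-1}hg}Q)_g$. The truncation to $B(0,N+R_0)$, the use of right-invariance of $\rho$ and unimodularity of $G$, and the limit $N\to\infty$ followed by $R_0\to\infty$ are the standard transference steps and are carried out correctly.
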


Note that Theorem \ref{t:avg} implies Theorem \ref{t:main}, as functions in $L^p$ are finite almost everywhere.

From Estimate \eqref{e:hom lie2} we obtain similarly the following result with a singular integral weight. We call a function $k$ an $\alpha$-convolution kernel on $G$ if the function $K(x,y) = k(x\circ y^{-1})$ is an $\alpha$-kernel satisfying the cancellation condition \eqref{e:CZcancellation}. 

\begin{theorem}
    \label{t:sing}
     In the situation of Theorem \ref{t:main}, for every $\alpha \in (0,1]$, there exists a constant $C > 0$ such that for every $\alpha$-convolution kernel on $G$,
    \begin{equation*}
        \Big\| \sup_{Q} \Big\|\int_{G\setminus B(0,R)} 
        f(T^g x)\, e(Q(g)) k(g) \, \mathrm{d}\mu(g)\Big\|_{V^r_R} \Big\|_{p} 
        \le C \|f\|_{p},
    \end{equation*}
    where the supremum is taken over all measurable Leibman polynomials $Q$ of degree at most $d$.
\end{theorem}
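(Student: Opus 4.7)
The strategy is to deduce Theorem~\ref{t:sing} from estimate~\eqref{e:hom lie2} of Corollary~\ref{c:hom Lie}, in direct parallel with the derivation of Theorem~\ref{t:avg} from~\eqref{e:hom lie1}. The only new ingredient, compared to Theorem~\ref{t:avg}, is a preparatory identity that converts the upper truncation appearing in the statement of Theorem~\ref{t:sing} into the lower truncation appearing in~\eqref{e:hom lie2}; the remainder is a Calderón transference argument identical to the one used for Theorem~\ref{t:avg}.

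Write $U_R(P,f)(x) := \int_{B(0,R)} f(T^g x)\, e(P(g))\, k(g)\, \mathrm{d}\mu(g)$ for the operator in the theorem, and set
\[
    B_u(P,f)(x) := \int_{\rho(g) > u} f(T^g x)\, e(P(g))\, k(g)\, \mathrm{d}\mu(g).
\]
Both expressions must be understood as principal values, but the $V^r$ semi-norm depends only on consecutive differences, and a direct calculation using the cancellation of $k$ shows that for $0 < R_1 < R_2$ and a.e.\ $x$, both $U_{R_2}-U_{R_1}$ and $B_{R_1}-B_{R_2}$ equal the absolutely convergent annular integral
\[
    \int_{R_1 < \rho(g) < R_2} f(T^g x)\, e(P(g))\, k(g)\, \mathrm{d}\mu(g).
\]
This yields the pointwise identity $\|U_R(P,f)(x)\|_{V^r_R} = \|B_u(P,f)(x)\|_{V^r_u}$, reducing Theorem~\ref{t:sing} to the corresponding bound on $\|\sup_P \|B_u(P,f)\|_{V^r_u}\|_{L^p(X)}$.

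For the reduced statement, I would apply the Calderón transference principle~\cite{MR227354} to~\eqref{e:hom lie2} with the convolution kernel $K(x,y) := k(y \circ x^{-1})$. This $K$ is an $\alpha$-kernel satisfying~\eqref{e:CZcancellation} thanks to the corresponding properties of $k$, combined with the symmetry $\rho(g) = \rho(g^{-1})$ of the right-invariant metric and the unimodularity of $G$. The transference then proceeds exactly as for Theorem~\ref{t:avg}: for each $x \in X$ lift $f$ to $F_x^N(g) := f(T^g x)\, \mathbf{1}_{B(0,N)}(g)$ on $G$, apply the group estimate pointwise in $x$, average over base points in a large ball in $G$ using the measure-preservation of $T$, and send $N \to \infty$. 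Compatibility with $\sup_{P \in \mathcal{Q}}$ uses that the family of Leibman polynomials of degree at most $d$ is invariant under right translation, a standard inductive consequence of the identity $\Delta_g(P \circ R_h) = R_h \Delta_{h^{-1} g h} P$ on the nilpotent group $G$.

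The principal technical point is the bookkeeping of inversions and left/right translations needed to align the group-side integrand $k(y \circ x^{-1})\, f(y)\, e(Q(y))$ with the action-side integrand $k(g)\, f(T^g x)\, e(P(g))$ under the change of variable $g = y \circ x^{-1}$. This is routine once conventions are fixed, and no new ideas beyond those in the proof of Theorem~\ref{t:avg} are required. Combining the transferred bound for $B_u$ with the variation identity above then concludes the proof of Theorem~\ref{t:sing}.
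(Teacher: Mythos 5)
Your proof is correct and follows the paper's intended approach: the paper derives Theorem~\ref{t:sing} from estimate~\eqref{e:hom lie2} by Calder\'on transference in parallel with Theorem~\ref{t:avg}, leaving this as a one-line remark, and the details you supply---the $V^r$-equality of upper and lower truncations (since the seminorm sees only annular differences), the passage from $k$ to an $\alpha$-kernel $K$, and the right-translation invariance of Leibman polynomials via $\Delta_g(R_h P)=R_h\Delta_{h^{-1}gh}P$---are exactly the ones needed and are handled correctly. One cosmetic remark: the paper's convention for an $\alpha$-convolution kernel is $K(x,y)=k(x\circ y^{-1})$, the transpose of your $k(y\circ x^{-1})$, but this is harmless because \eqref{e:kernel size}, \eqref{e:kernel y smooth}, and \eqref{e:CZcancellation} are all symmetric under swapping the two arguments, so your transposed choice is again an $\alpha$-kernel with cancellation and Corollary~\ref{c:hom Lie} applies to it directly.
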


We record a further corollary, motivated from the formulation of the Wiener--Wintner theorem for amenable groups in \cite{ornstein-weiss,zk-return}. 

\begin{cor}\label{c:unitary}
    Let $n \in \mathbb{N}$ and $C > 0$.
    In the situation of Theorem \ref{t:main}, for each $f \in L^p(X, \nu)$, there exists a full measure set $X_f \subset X$ such that for all $x \in X_f$,
    \[
        \sup_{Q, \phi} \Big\| \frac{1}{\mu(B(0,R))}\int_{B(0,R)} f(T^g x) \phi(Q(g)) \, \mathrm{d}\mu(g) \Big\|_{V^r_R} < \infty,
    \]
     where the supremum is over all unitary groups $U$ of dimension at most $n$, all measurable quadratic Leibman polynomials $Q: G \to U$, and all smooth functions $\phi: U \to \mathbb{C}$ of $C^{n}$-norm at most $C$.
    The same holds in the singular integral case. 
\end{cor}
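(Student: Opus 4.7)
My strategy is to reduce Corollary~\ref{c:unitary} to Theorem~\ref{t:avg} (and Theorem~\ref{t:sing} in the singular integral case) by expressing the weight $\phi(P(g))$ as an absolutely convergent superposition of scalar modulations $e(Q(g))$ with $Q\in\mathcal{Q}$. The key structural observation is the following: for a faithful embedding $U\hookrightarrow U(N)$ (which exists with $N=N(n)$ depending only on $n$), and any quadratic Leibman polynomial $P\colon G\to U(N)$, each matrix entry $P_{ij}\colon G\to\mathbb{C}$ can be written as a finite linear combination of exponentials $e(Q(g))$ with $Q\in\mathcal{Q}$. This reflects that quadratic Leibman polynomials into compact groups are parametrized by commuting infinitesimal generators together with real-valued Leibman polynomials of degree $\leq 2$ as coefficients.

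Granting this, I would first apply Peter--Weyl to expand $\phi\in C^n(U)$ as
\[
\phi(u)=\sum_\pi d_\pi\operatorname{tr}\bigl(\widehat\phi(\pi)\,\pi(u)\bigr),
\]
which converges absolutely in $L^\infty(U)$ with coefficient sum bounded by a constant $C_1(n,C)$, using $\dim U\leq n$ together with $\|\phi\|_{C^n}\leq C$ and Sobolev embedding on the compact Lie group $U$. By Tannaka--Krein duality, each matrix coefficient $\pi(u)_{ij}$ is a polynomial in the entries $u_{kl}$ and $\overline{u_{kl}}$. Substituting $u=P(g)$ and writing each $P_{kl}(g)$ as a finite sum of $e(Q(g))$'s as above, then collapsing products via $e(Q_1)e(Q_2)=e(Q_1+Q_2)$ together with the closure of $\mathcal{Q}$ under real-linear combinations and negation, one obtains
\[
\phi(P(g))=\sum_k c_k\,e(Q_k(g))
\]
with $\sum_k|c_k|\leq C_2(n,C)$ and each $Q_k\in\mathcal{Q}$, uniformly over the allowed $U,P,\phi$.

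Applying Minkowski's integral inequality in the $V^r_R$ norm and in $L^p(X,\nu)$, followed by Theorem~\ref{t:avg}, yields
\[
\Bigl\|\sup_{P,\phi}\Bigl\|\tfrac{1}{\mu(B(0,R))}\int_{B(0,R)}f(T^g x)\,\phi(P(g))\,\mathrm{d}\mu(g)\Bigr\|_{V^r_R}\Bigr\|_p\leq C_2(n,C)\cdot C\,\|f\|_p,
\]
so the inner $\sup$-$V^r_R$ norm is finite almost everywhere, defining the desired full-measure set $X_f$. The singular integral case of Corollary~\ref{c:unitary} is proved identically, with Theorem~\ref{t:sing} replacing Theorem~\ref{t:avg}.

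The main obstacle is the structural claim that entries of quadratic Leibman polynomials $P\colon G\to U(N)$ are finite sums of exponentials of real quadratic Leibman polynomials. For $G=\mathbb{R}$, this follows from the classification $P(t)=c_0\,c_1^t\,c_2^{t^2}$ with $c_1,c_2\in U(N)$ commuting, permitting simultaneous diagonalization of $\log c_1,\log c_2$, so that entries become sums of at most $N$ terms of the form $e(Q)$ with $Q$ real quadratic in $t$. For general homogeneous Lie $G$, an analogous nilpotent structure theorem reduces $P$ to commuting infinitesimal generators indexed by real Leibman polynomial coordinates on $G$; formulating and applying this structure theorem is where most of the work lies, along with verifying uniformity of the Peter--Weyl and Tannaka--Krein bounds across the class of all compact Lie groups $U$ of dimension at most $n$.
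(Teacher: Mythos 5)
There is a genuine gap, which you yourself flag at the end: the entire argument rests on the structural claim that for any quadratic Leibman polynomial $P\colon G\to U(N)$, the matrix entries $P_{ij}(g)$ are finite linear combinations of $e(Q(g))$ with $Q$ real-valued quadratic Leibman polynomials, and this is never established beyond the case $G=\R$. That claim is equivalent to the paper's Proposition~\ref{p:abelian} (the image $P(G)$ lies in a coset of a torus in $U(N)$, so after conjugation its entries are exponentials of real quadratic Leibman polynomials). The paper proves this by invoking the universal property of $\mathrm{Pol}_2(G)$ from \cite{quadratic} together with the short exact sequence description of $\mathrm{Pol}_2(G)$, then showing that $\mathrm{Pol}_2(G)$ is solvable and divisible, hence has no nontrivial finite quotients, so the closure of the image of the induced homomorphism $\mathrm{Pol}_2(G)\to U(N)$ must be a torus. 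Nothing in your sketch provides a substitute for this; the appeal to an unspecified ``analogous nilpotent structure theorem'' is precisely the missing ingredient. In particular, the commuting-generators classification $P(t)=c_0c_1^tc_2^{t^2}$ you use for $G=\R$ does not extend verbatim to a general nilpotent Lie group $G$ -- that is exactly the nontrivial content of the result from \cite{quadratic}.

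Secondary remark: once $P(G)$ lands in a torus $T\subset U$, the paper simply Fourier-expands $\phi|_T$ on $T$, whose dimension is at most $n$, so $\|\phi\|_{C^n}\le C$ gives an absolutely summable Fourier series with coefficient sum bounded uniformly in $(U,P,\phi)$, and each character composed with $P$ is $e(Q(g))$ for $Q\in\mathcal{Q}$. Your route through Peter--Weyl on all of $U$ and Tannaka--Krein duality is more roundabout and, as stated, leaves the uniformity of the resulting coefficient sum $\sum_k|c_k|$ across the infinitely many irreducible representations of $U$ unjustified; since you only need $\phi$ on $P(G)\subset T$, restricting to $T$ first is both necessary for the uniform bound and strictly simpler. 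The final reduction (Minkowski in $V^r_R$ and in $L^p$, then Theorem~\ref{t:avg} or Theorem~\ref{t:sing}) is correct and coincides with the paper's.
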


For nilpotent Lie groups, formulations of the Wiener--Wintner theorem using general unitary representations as in \cite{ornstein-weiss, zk-return} are equivalent to the formulation using just characters, as all finite-dimensional irreducible representations of connected nilpotent Lie groups are one dimensional. Corollary \ref{c:unitary} holds because similarly all quadratic polynomials $G \to U$ have image in a coset of a torus (and because one can lift torus-valued polynomials to real-valued ones, see Remark \ref{rem:lifting}). We prove this in Section \ref{s:quadratic} using results from \cite{quadratic}. It is natural to conjecture that the same is true for polynomials of higher degrees (cf.~Remark \ref{higher-degree}). This would imply Corollary \ref{c:unitary} for polynomials of general degree.

\subsection{Remarks} We comment on previous results, possible extensions of Theorem \ref{t:main}, and on the optimality of the assumptions.

\subsubsection{Variation estimates}
The method of using variation estimates to prove pointwise convergence results in ergodic theory via harmonic analysis originates in works of Bourgain \cite{Bourgain_arithmetic, bourgain}, and is by now a standard tool. It is well known that the range of variational exponents $r > 2$ is best possible, already for Birkhoff's theorem. This follows from comparison with Brownian motion, which has infinite $2$-variation almost surely, see for example the proof of Lemma 3.11 in \cite{Bourgain_arithmetic}.

In the context of the linear Wiener--Wintner theorem, this method was first pursued by Lacey and Terwilleger \cite{lacey-terwilleger}, who proved the singular integral version of Theorem \ref{t:main} for $G = \R$ and linear modulations, and by Oberlin et al. \cite[Appendix D]{oberlinetal}, who obtain a similar result as a corollary of the stronger variational Carleson theorem. A very recent application, including also some polynomial modulations and the discrete case, was given by Krause in \cite{krause2025}.

For the polynomial Wiener--Wintner theorem, the same reasoning leads to the variational truncation version of the polynomial Carleson theorem of Lie \cite{lie-quadratic, lie-polynomial}. Part of the motivation for the present paper was to demonstrate that such extensions follow, in a very general setup, from the generalized polynomial Carleson theorem in \cite{beckeretal}.
We emphasize that this theorem in \cite{beckeretal} is a formalized theorem, i.e., 
verified by computer \cite{becker2025blueprintformalizationcarlesonstheorem}.

\subsubsection{Range of exponents}

The use of Carleson's theorem, or some extension of it, restricts the range of exponents in Theorem \ref{t:metric} to $p > 1$, and also precludes a weak type estimate at the endpoint. If one is interested in a qualitative Wiener--Wintner theorem as in Theorem \ref{t:wiener-wintner}, that is mere convergence but no variation bound, then the result is also true for $p = 1$. This follows from the result for $p > 1$, using the maximal ergodic theorem and density of $L^p$, $p > 1$, in $L^1$. However, the variation norm bound cannot be extended to the case $p = 1$ by such arguments.

\subsubsection{Generality of the acting group}
There is a vast literature on qualitative generalizations of the Wiener--Wintner theorem, see, e.g., \cite{assani-survey,eisner-farkas} for an overview. We mention in particular \cite{ornstein-weiss, zk-return} who prove a (linear) Wiener--Wintner theorem for amenable groups, and \cite{lesigne2} who proves a polynomial Wiener--Wintner theorem on the integers. Our use of \cite{beckeretal} requires that $G$ be a doubling metric measure space, and this assumption is essential. Consequently, the largest class of Lie groups to which our result can apply is given by the nilpotent groups; see \cite[Section~4]{nevo} and the references therein. It may be worthwhile to investigate whether meaningful variants of Carleson's theorem persist in non-doubling settings. 

By contrast, the assumption that the space is $\AD$-dimensional is inessential and is imposed only to streamline certain arguments. It can be replaced by a small-boundary condition as in \cite{zk-var-trunc}.

Not every connected, simply connected nilpotent Lie group is homogeneous \cite{dyer}. Theorem \ref{t:main} therefore leaves open the (polynomial) Wiener--Wintner theorem for nilpotent Lie groups that are not homogeneous. In this setting, the existing literature suggests taking the variation norm in Theorem \ref{t:main} only over large balls, i.e.\ restricting to scales $t\in(1,\infty)$. We expect that our methods also yield this large-scale variant of Theorem \ref{t:main} on non-homogeneous nilpotent Lie groups.

Indeed, the assumptions of Section \ref{ss:setup results} are satisfied on every nilpotent Lie group for all sufficiently large balls. The only essential input in our proof is Lemma \ref{l: comp ball}, which holds for balls of radius at least $1$ in every nilpotent Lie group (and for any reasonable metric); see, for example, \cite{Karidi, MSW07}. One may then combine this with suitable large-scale variants of the results in \cite{beckeretal} and \cite{zk-var-trunc} to obtain the corresponding conclusions for large balls. These variants follow from a tedious but straightforward inspection of the arguments. We leave the details to the interested reader.

\subsubsection{Lattices}
All of our results are formulated for measure-preserving flows. It is natural to ask whether the methods extend to lattices in nilpotent Lie groups. Our approach does not directly extend except in the case of linear polynomials and the lattice $\mathbb{Z}^d$. In that case the Magyar--Stein--Wainger sampling principle \cite[Proposition 2.1]{Magyar+02} allows to transfer variation bounds with smooth cutoff functions from $\R^d$ to $\mathbb{Z}^d$. Using an approximation argument as in Section \ref{s:ShTruformSmoTru}, this implies $r$-variation bounds for averages over balls for sufficiently large $r$. On the other hand, already for non-linear polynomials and the lattice $\mathbb{Z}^d$, such transference arguments fail. An easier open problem is the quadratic Carleson theorem on the integers, we refer to \cite{KrauseRoos23, Krause24} for related work, and to \cite{krause2025} for an application towards discrete polynomial Wiener--Wintner theorems.

\subsection{Structure of the paper}
\label{s:overview}

Most of the paper is devoted to the proof of Theorem \ref{t:metric}. This is done in three steps, in Sections \ref{s:SmoTrun}, \ref{s:ShTruformSmoTru}, and \ref{s:sparse}. In the first step we apply the generalized polynomial Carleson theorem from \cite{beckeretal}, see Theorem \ref{t:gen Carleson}, to obtain a weaker variant of Theorem \ref{t:metric}, see Proposition \ref{p:SmoTrun}. It differs from Theorem \ref{t:metric} in two ways: It uses smooth cutoff functions to truncate the singular integrals or averages, and it only claims bounds in the smaller range of exponents $p \in (1,2)$. The application of Theorem \ref{t:gen Carleson} requires as input the estimate \eqref{e:nontanbound} for a  nontangential maximal operator, which was proved in the generality we need by Zorin-Kranich in \cite{zk-var-trunc}. In the second step, we replace the smooth cutoff functions by indicator functions,
using a standard approximation argument. Finally, we extend in the third step the range of exponents to $p \in (1, \infty)$ using a sparse domination result of Lorist \cite{Lorist2021}. To obtain Theorems \ref{t:avg} and \ref{t:sing}, it remains then to check that homogeneous Lie groups and their Leibman polynomials satisfy the assumptions of Theorem \ref{t:metric}. We verify this in Section \ref{s:lie}. 
Finally, we prove Corollary \ref{c:unitary} in Section \ref{s:quadratic}.    

\subsection*{Acknowledgements.}  A.J. and C.T. were funded by the Deutsche Forschungsgemeinschaft (DFG, German Research Foundation) under Germany's Excellence Strategy -- EXC-2047 -- 390685813. C.T. also  acknowledges funding by DFG CRC 1720 -- 539309657, and A.J. by DFG Heisenberg Grant -- 547294463. We are grateful to the anonymous referee for many helpful suggestions and corrections.

\section{Proof of the variational Wiener--Wintner theorem with smooth cutoffs}
\label{s:SmoTrun}

In this section, we prove Proposition \ref{p:SmoTrun} below, which is a variant of Theorem  \ref{t:metric} with a smaller range of exponents and smooth cutoff functions. 

\subsection{Smooth cutoff functions}
 For any function $\phi: [0,\infty) \to \mathbb{C}$ we denote
\[
    \phi_t(x) = t^{-1} \phi(t^{-1}x).
\]
For $\delta > 0$, we set 
\begin{equation}\label{e:zeta}
    \zeta^\delta(x) = \log(1 + \delta)^{-1} \mathbf{1}_{[1,1 + \delta]}(x). 
\end{equation}
This is chosen and normalized so that
\begin{equation}
    \supp \zeta^{\delta} \subset [1, 1+\delta], \label{e:zeta_supp}
\end{equation}
and
\begin{equation}
    \int_0^\infty (\zeta^{\delta})_t(x) \, \mathrm{d}t = 1, \qquad x \in (0,\infty).\label{e:zeta_int}
\end{equation}
Define then the $\delta$-smooth cutoff
\[
    Z_{a,b}^{\delta}(x) = \int_a^b (\zeta^\delta)_t(x) \, \mathrm{d}t.
\]
It has the property that
\begin{equation}\label{e:Z_prop}
    \mathbf{1}_{[(1 + \delta)a, b]} \le  Z_{a,b}^{\delta} \le \mathbf{1}_{[a, (1 + \delta)b]}.
\end{equation}

\subsection{Main proposition}
In what follows, we fix a $\AD$-dimensional metric measure space $(X, \rho, \mu)$ together with a $\varepsilon$-cancellative compatible collection $\mathcal{Q}$. All constants below are allowed to depend on that data.

Using $Z_{a,b}^\delta$ as our cutoff function rather than $\mathbf{1}_{[a,b]}$ yields the $\delta$-truncated modulated singular integrals
\begin{equation}
    \label{e:trsdefdelta}
    S_{a,b}^{\delta}(K, Q, f)(x) = \int Z^\delta_{a,b}(\rho(x,y)) K(x,y) f(y) e(Q(y)) \, \mathrm{d}\mu(y)
\end{equation}
and the $\delta$-truncated modulated averages
\begin{equation}
    \label{e:tradefdelta}
    A_R^{\delta}(Q, f)(x) = R^{-\AD} \int Z^{\delta}_{0, R}(\rho(x,y)) f(y) e(Q(y)) \, \mathrm{d}\mu(y).
\end{equation} 
We further denote $S_{a,\infty}^\delta = S_a^\delta$ and consistently with the notation introduced in \eqref{e:trsdef} and \eqref{e:tradef}
\[
    S_{a,b} := S_b - S_a, \qquad\qquad A_{a,b} := A_b - A_a\,,
\]
and similarly for $S^\delta$ and $A^\delta$. 

\begin{prop}
    \label{p:SmoTrun}
    Let $K$ be an $\alpha$-kernel on $X$ satisfying the cancellation condition \eqref{e:CZcancellation} and $p \in (1,2)$. For all $0 < \delta \le  1$ and $f \in L^p(X)$, we have
    \begin{equation}
        \label{e:SmoTrun1}
        \left\|\sup_{Q \in \mathcal{Q}} \left\| S^{\delta}_{u}(K, Q, f)(x)\right\|_{V^r_u}\right\|_p
        \leq C \delta^{-\alpha} \|f\|_p\,,
    \end{equation}
    and
    \begin{equation}
        \label{e:SmoTrun2}
        \left\|\sup_{Q \in \mathcal{Q}} \left\| A_u^\delta(Q, f)(x) \right\|_{V^r_u}\right\|_p
        \leq C \delta^{-\alpha} \|f\|_p\,.
    \end{equation}
\end{prop}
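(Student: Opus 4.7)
The plan is to deduce Proposition \ref{p:SmoTrun} directly from the generalized polynomial Carleson theorem of \cite{beckeretal}, namely Theorem \ref{t:gen Carleson}. Very roughly, that result asserts that, in a doubling metric measure space equipped with a compatible cancellative collection of phases, if an appropriate nontangential variational maximal function for the non-modulated smoothly truncated singular integral is bounded, then so is its Carleson analogue in which the supremum over modulations is taken inside the variational norm, for $p$ in a range containing $(1,2)$. This is precisely the shape of \eqref{e:SmoTrun1} and \eqref{e:SmoTrun2}.

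The first step is to check that the data fixed at the start of Section \ref{s:SmoTrun} fits the hypotheses of Theorem \ref{t:gen Carleson}. The $\AD$-regularity \eqref{e:ADregular}, the compatibility axioms (1)--(5), and the cancellativity \eqref{e:vdc cond} of $\mathcal{Q}$ are tailored to supply the geometric and phase-oscillation inputs required by \cite{beckeretal}. The $\alpha$-kernel conditions \eqref{e:kernel size}--\eqref{e:kernel y smooth} together with the cancellation \eqref{e:CZcancellation} supply the kernel inputs. Finally, the smooth truncations $Z^\delta_{a,b}$ built out of $\zeta^\delta$ have transition regions of width $\delta a$ and $\delta b$, so they fit the class of smooth cutoffs handled in \cite{beckeretal}, with the smoothness scale being $\delta$; the minor differences between the present setup and that of the companion paper are announced to be reconciled in this same section.

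The second step is to install the nontangential maximal input \eqref{e:nontanbound}, which bounds a variational maximal version of the non-modulated, smoothly truncated singular integral. This is exactly the output of the work of Zorin-Kranich \cite{zk-var-trunc} on doubling metric measure spaces, and is cited at precisely the level of generality required. Feeding it into Theorem \ref{t:gen Carleson} yields \eqref{e:SmoTrun1} for the singular integral case; the corresponding assertion \eqref{e:SmoTrun2} for averages follows either by specialising the argument to a trivial kernel or by handling averages separately with the analogous, strictly simpler, non-tangential input for averages that also appears in \cite{zk-var-trunc}.

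The $\delta^{-\alpha}$ factor should emerge naturally: it quantifies how the Hölder regularity \eqref{e:kernel y smooth} of $K$ interacts with the fact that $Z^\delta_{a,b}$ varies on scales of size $\delta a$ and $\delta b$ rather than on the natural scale of the truncation. The main obstacle is therefore not conceptual but rather the careful bookkeeping that matches the hypotheses of Theorem \ref{t:gen Carleson} to the slightly more restrictive setup of Section \ref{ss:setup results}, and that propagates the $\delta$-dependence through the Carleson machinery of \cite{beckeretal} to produce the stated power loss.
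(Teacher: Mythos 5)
Your proposal correctly identifies the two external inputs the paper relies on (Theorem~\ref{t:gen Carleson} from \cite{beckeretal} and the nontangential bound of \cite{zk-var-trunc}), but it misreads what Theorem~\ref{t:gen Carleson} actually says and, as a result, omits the one step that does all the work.

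Theorem~\ref{t:gen Carleson} is a bound for the \emph{maximally truncated}, maximally modulated operator $T$ in \eqref{e:def main op}: its conclusion contains $\sup_Q \sup_{R_1<R_2}|\cdot|$, not a variation norm. You describe it as a result in which ``the supremum over modulations is taken inside the variational norm,'' which it is not. Since the left-hand sides of \eqref{e:SmoTrun1} and \eqref{e:SmoTrun2} involve $V^r_u$ rather than a sup over truncations, no direct application of Theorem~\ref{t:gen Carleson} produces them. The paper's proof bridges this gap by \emph{linearizing} the variation norm: writing
\[
\|a\|_{V^r_u}=\sup_{J}\sup_{u_0<\dots<u_J}\sup_{\|w\|_{\ell^{r'}}\le 1}\Big|\sum_{j=1}^J w_j\big(a(u_j)-a(u_{j-1})\big)\Big|,
\]
choosing measurable maximizers $J(x),\,u_j(x),\,w(x)$, and then observing that the resulting expression is a single maximally modulated, maximally truncated singular integral \eqref{e:lin_op} with a new, $x$-dependent kernel
\[
\mathbf{K}(x,y)=\delta^\alpha K(x,y)\sum_{j=1}^{J(x)}w_j(x)\,Z^\delta_{u_{j-1}(x),u_j(x)}(\rho(x,y)).
\]
Only then can Theorem~\ref{t:gen Carleson} be invoked, and only after verifying that $\mathbf{K}$ (after dividing by a constant) is a one-sided $\beta$-kernel with $\beta=\min\{1/2,\alpha\}$; this is the content of Lemma~\ref{l:gamma}, and it is precisely where the $\delta^\alpha$ normalization and the bound $\|w\|_{\ell^{1/\gamma}}\le\|w\|_{\ell^{r'}}\le 1$ (hence the restriction $\gamma\le 1/2$, hence $\beta\le 1/2$) enter. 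Your remark that ``the $\delta^{-\alpha}$ factor should emerge naturally'' is a heuristic, not an argument; the telescoping estimate in \eqref{e:tel}, the split into $\mathcal I_1,\mathcal I_2$, and the Jensen step are what actually produce it. Similarly, the nontangential bound \eqref{e:nontanbound} that feeds into Theorem~\ref{t:gen Carleson} must be established for the \emph{linearized} kernel $\mathbf{K}$, not for $K$; the paper undoes the linearization in \eqref{e:details}--\eqref{e:details2} to reduce to Proposition~\ref{p:NontVar} plus the Hardy--Littlewood maximal function, and your proposal silently conflates the two. Finally, the averages case \eqref{e:SmoTrun2} is not ``a trivial kernel'': the analogue of $\mathbf{K}$ is $\mathbf{A}(x,y)=\Lambda_x(\rho(x,y))$ which is not of the form (kernel)$\times$(radial cutoff), so it requires its own one-sided-kernel verification.

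Without the linearization and Lemma~\ref{l:gamma}, the proposal does not constitute a proof: it identifies the right tools but has no mechanism for converting a variational statement into a maximal one, which is the entire content of this step.
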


\subsection{The generalized polynomial Carleson theorem}

    We will deduce Proposition \ref{p:SmoTrun} from the generalized Carleson theorem proved in \cite{beckeretal}, which we now state. 
    
    For $0<\alpha\leq 1$, a one-sided $\alpha$-kernel $K$ on $X$ is a measurable function
    \begin{equation*}
        K \colon X \times X \to \mathbb{C}
    \end{equation*}
    such that for all $x, y', y \in X$ with $x \neq y$
    \begin{equation}\label{e:1skernel size}
        |K(x,y)| \le \rho(x,y)^{-\AD}\,,
    \end{equation}
    and if $2\rho(y,y') \le \rho(x,y)$, then
    \begin{equation}\label{e:1skernel y smooth}
        |K(x,y) - K(x,y')|
        \le
        \left(\frac{\rho(y,y')}{\rho(x,y)}\right)^{\alpha}
        \rho(x,y)^{-\AD}\,.
    \end{equation}
    Note the difference to \eqref{e:kernel y smooth}: For one-sided kernels smoothness is only assumed in the second variable.
    Associated to $K$ are the nontangential maximal operator
    \begin{equation}\label{e:def tang unm op}
        T_{*}f(x)
        := \sup_{R_1 < R_2} \sup_{\rho(x,x') < R_1}
        \Big| \int_{R_1 < \rho(x',y) < R_2}
        K(x',y) f(y) \, \mathrm{d}\mu(y) \Big|
    \end{equation}
    and the maximally modulated operator
    \begin{equation}\label{e:def main op}
        Tf(x)
        := \sup_{Q \in \mathcal{Q}} \sup_{0 < R_1 < R_2}
        \Big| \int_{R_1 < \rho(x,y) < R_2}
        K(x,y) f(y) e(Q(y)) \, \mathrm{d}\mu(y) \Big|\,.
    \end{equation}
    The main result of \cite{beckeretal} are restricted weak type $L^q$ bounds for $q \in (1,2]$ for the operator $T$ (under mildly more general assumptions). Specializing it to $\AD$-dimensional spaces, interpolation yields the following theorem. 

    \begin{theorem}[\cite{beckeretal}, Theorem 1.1]
    \label{t:gen Carleson}
        For all $1<q < 2$ and $0<\alpha\leq 1$, there exists a constant $C$ such that the following holds.
        Let $K$ be a one-sided $\alpha$-kernel on $X$. 
        Assume that for all $g \in L^2(X, \mu)$, \begin{equation}\label{e:nontanbound}
            \|T_{*}g\|_{2} \leq C \|g\|_2\,.
        \end{equation}
        Then for all $f \in L^q(X, \mu)$,
        \begin{equation}
            \label{e:strong}
            \|Tf\|_q \le C \|f\|_q.
        \end{equation}
    \end{theorem}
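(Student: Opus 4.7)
The statement is flagged as Theorem 1.1 of the companion paper \cite{beckeretal}, specialized to $\AD$-dimensional spaces, with the strong-type bound obtained via interpolation. Accordingly, my proposal is not to reprove the Carleson-type estimate from scratch but to assemble it from two ingredients: (i) the restricted weak type bounds furnished by \cite{beckeretal}, and (ii) a Marcinkiewicz-style interpolation that upgrades restricted weak type to strong type $L^q$ on the open interval $q\in(1,2)$.

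In the first step I would invoke the main theorem of \cite{beckeretal}, which under the nontangential $L^2$-hypothesis \eqref{e:nontanbound} delivers restricted weak type $(q,q)$ bounds for $T$ for every $q\in(1,2]$, in a setting slightly more general than ours (e.g.\ for doubling metric measure spaces). Since our $\AD$-dimensional space $(X,\rho,\mu)$ satisfies the volume growth condition \eqref{e:ADregular}, it is in particular doubling with a doubling constant controlled by $\AD$; all other hypotheses of the companion result (compatibility, $\varepsilon$-cancellation, one-sided kernel bounds \eqref{e:1skernel size}--\eqref{e:1skernel y smooth}, and the nontangential bound) are assumed verbatim. This gives, for every $q\in(1,2]$ and every measurable set $E\subset X$ of finite measure, the restricted weak type estimate
\[
\bigl\| T \mathbf{1}_E \bigr\|_{L^{q,\infty}(X,\mu)} \le C_q\, \mu(E)^{1/q}.
\]

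In the second step I would interpolate. The operator $T$ defined in \eqref{e:def main op} is sublinear (indeed, it is a supremum of absolute values of linear operators indexed by $(Q,R_1,R_2)$), so Stein--Weiss/Marcinkiewicz real interpolation between restricted weak type endpoints applies. Fix $q\in(1,2)$ and pick $q_0,q_1$ with $1<q_0<q<q_1<2$. Using the restricted weak type $(q_i,q_i)$ bounds from Step 1 at $q_0$ and $q_1$, real interpolation yields the strong type inequality $\|Tf\|_q\le C\|f\|_q$ for all $f\in L^q(X,\mu)$, with constant depending only on $q_0,q_1,q$ and the constants from \cite{beckeretal}. Choosing $q_0,q_1$ as functions of $q$ keeps the constant $C$ finite throughout $(1,2)$, which is the stated range.

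The only point requiring care is to verify that one may legitimately specialize the hypotheses of the companion theorem to the $\AD$-dimensional setting, and that the interpolation step is valid for the sublinear supremum in \eqref{e:def main op}. The first is a matter of checking that the doubling and geometric assumptions in \cite{beckeretal} are implied by \eqref{e:ADregular}; the second is standard for suprema of absolute values of linear operators, since one may restrict the supremum to any countable dense subfamily and apply Marcinkiewicz termwise. There is no substantial obstacle here — the content of the theorem lives in \cite{beckeretal}; this section merely packages it into the form used in the rest of the paper.
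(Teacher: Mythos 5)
Your proposal is correct and matches the paper's own (one-sentence) justification exactly: the paper likewise imports the restricted weak type $(q,q)$ bounds for $q\in(1,2]$ from the companion work, specializes them to the $\AD$-dimensional setting, and obtains the strong type $L^q$ bound on $(1,2)$ by Stein--Weiss interpolation between two restricted weak type endpoints. No further comment is needed.
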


    \subsection{Nontangential maximal operators}
    In our application of Theorem \ref{t:gen Carleson}, the assumption \eqref{e:nontanbound} for the nontangential maximal operator $T_*$ follows from the next proposition which was proved by Zorin-Kranich in the generality we need here, see \cite[Theorem 1.3]{zk-var-trunc} for averages and see \cite[Theorem 1.8]{zk-var-trunc} for singular integrals.

\begin{prop}
    \label{p:NontVar}
    Let $K$ be an $\alpha$-kernel on $X$ satisfying the cancellation condition \eqref{e:CZcancellation} and let $r > 2$. For all $f \in L^2(X)$, we have
    $$
    \left\|\sup_{R > 0} \sup_{\rho(x,x') \leq R} \left\| S_{R, u}(K, 1, f)(x')\right\|_{V^r(u \in [R, \infty))}\right\|_2
    \leq C \|f\|_2\,
    $$
    and
    $$
    \left\|\sup_{R>0} \sup_{\rho(x,x') \leq R} \left\| A_u(1, f)(x') \right\|_{V^r(u \in (R, \infty))}\right\|_2 
    \leq C \|f\|_2\,.
    $$
    By convexity, the same holds for $S^\delta$ and $A^\delta$ for all $\delta > 0$.
\end{prop}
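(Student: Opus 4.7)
\emph{Proof plan.} The two main inequalities are quoted from Zorin-Kranich, so the essential step is to verify that our setup matches the hypotheses of \cite{zk-var-trunc} and then invoke the cited theorems; the extension to the smooth-cutoff operators $S^\delta$, $A^\delta$ is then a short Minkowski argument.

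First, I would invoke \cite[Theorem 1.3]{zk-var-trunc} for the averaging case: our $\AD$-regularity \eqref{e:ADregular} implies the doubling and small-boundary hypotheses used there, and the double supremum over $R > 0$ and $x'$ with $\rho(x, x') \leq R$, together with the $V^r$ norm in $u \geq R$, is exactly the nontangential variational quantity bounded in that result. The singular integral estimate is handled identically via \cite[Theorem 1.8]{zk-var-trunc}, verifying that our $\alpha$-kernel \eqref{e:kernel size}--\eqref{e:kernel y smooth} together with the cancellation condition \eqref{e:CZcancellation} fits the Calderón--Zygmund framework there.

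For the convexity extension to $S^\delta$, I would start from the identity
\begin{equation*}
    S^\delta_u(K, 1, f)(x) = \frac{1}{\log(1+\delta)} \int_1^{1+\delta} \frac{1}{s}\, S_{su}(K, 1, f)(x)\, ds,
\end{equation*}
obtained by unwinding \eqref{e:zeta} and \eqref{e:trsdefdelta}; since $s^{-1}/\log(1+\delta)$ is a probability density on $[1, 1+\delta]$, this exhibits $S^\delta_u f$ as a genuine convex combination of sharp-cutoff singular integrals. An analogous calculation produces $A^\delta_R f$ as a positive weighted integral of $A_{sR} f$ with $s \in [1, 1+\delta]$ and $\delta$-dependent but bounded total mass. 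Combined with Minkowski's inequality for $V^r$ and the invariance of the $r$-variation under the reparametrization $u \mapsto su$ (which only shrinks the range of variation, since $s \geq 1$), these identities reduce the smooth-cutoff estimates to the sharp-cutoff ones. Taking $\sup$ over $x'$ with $\rho(x, x') \leq R$, then over $R$, and finally the $L^2$ norm yields the claim.

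The main obstacle is the bookkeeping in the first two steps: \cite{zk-var-trunc} is written at a slightly different level of generality (small-boundary condition in place of $\AD$-regularity and somewhat different kernel conventions), so one must translate carefully between the two frameworks. The convexity argument is essentially immediate once the displayed identity is recognized.
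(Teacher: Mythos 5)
Your proposal is correct and matches the paper's approach: the paper likewise attributes both main inequalities directly to Zorin-Kranich \cite[Theorems 1.3 and 1.8]{zk-var-trunc} and dispatches the smooth-cutoff extension with the one-phrase remark ``by convexity.'' Your explicit averaging identity, Minkowski's inequality for the $V^r$ norm, and the rescaling $u \mapsto su$ (with the observation that $sR \ge R$ so the nontangential constraint $\rho(x,x')\le R$ is preserved) are exactly the content that remark elides.
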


    \subsection{Proof of Proposition \ref{p:SmoTrun}: Singular integrals} 
    We start with estimate \eqref{e:SmoTrun1}.
    Our task is to estimate in $L^p$ by $C$ the quantity
    \[
         \delta^{\alpha} \cdot \sup_{J \in \mathbb{N}} \sup_{u_0 < \dotsb < u_J} \sup_{w \in \R^J, \|w\|_{\ell^{r'}} \le 1} \sup_{Q \in \mathcal{Q}} \Big| \sum_{j=1}^J w_j \cdot S^\delta_{u_{j-1}, u_j}(K, Q, f) \Big|\,.
    \]
    By monotone convergence, we can assume that each of the parameters in the suprema ranges over a finite set. Then we choose measurably a maximizer for the outer three suprema. Thus, it suffices to estimate, for all measurable functions $J: X \to \mathbb{N}$, $w: X \to \ell^{r'}$, and $u_0, \dotsc, u_{J(x)}: X \to (0,\infty)$, each with finite range, the quantity
    \begin{align}
        &\delta^{\alpha} \cdot \sup_{Q \in \mathcal{Q}} \Big| \sum_{j=1}^{J(x)} w_j(x) \cdot S^\delta_{u_{j-1}(x), u_j(x)}(K, Q, f)(x)\Big|\nonumber\\
        &= \sup_{Q \in \mathcal{Q}} \Big| \int_{u_0(x) \le \rho(x,y) \le (1+\delta)u_{J(x)}(x)} f(y) e(Q(y)) \mathbf{K}(x,y) \, \mathrm{d}\mu(y) \Big|\,, \label{e:lin_op}
    \end{align}
    where
    \begin{align*}
        \mathbf{K}(x,y) = \delta^{\alpha} \cdot K(x,y) \sum_{j=1}^{J(x)} w_j(x) \cdot Z_{u_{j-1}(x), u_j(x)}^{\delta}(\rho(x,y)) \, .
    \end{align*}
    This is a special case of Theorem \ref{t:gen Carleson}. It remains to check that $\mathbf{K}$ satisfies its assumptions:
    \begin{itemize}
        \item[a)] The nontangential maximal operator \eqref{e:def tang unm op} associated to $\mathbf{K}$ is bounded on $L^2$.
        \item[b)] Up to dividing by a constant, $\mathbf{K}$ is a one-sided $\beta$-kernel, where $\beta := \min\{1/2, \alpha\}$.
    \end{itemize}  
    Assumption a) is the content of Proposition \ref{p:NontVar}. Indeed, reversing the above steps, it holds
    \begin{align}
        T_* g(x) &= \sup_{R_1 < R_2} \sup_{\rho(x,x') < R_1} \Big| \int_{R_1 < \rho(x', y) < R_2} \mathbf{K}(x',y) g(y) \, \mathrm{d}\mu(y) \Big|\label{e:details}\\
        &= \delta^\alpha \sup_{R_1 < R_2} \sup_{\rho(x,x') < R_1} \Big| \sum w_j(x') S_{u_{j-1}(x') \vee R_1, u_j(x') \wedge R_2}^\delta (K, 1, g)(x') \Big|\nonumber\\
        &\quad + C\delta^\alpha M |g|(x)\,,\label{e:details2}
    \end{align}
    where the sum in the second line is over all $j$ such that $(u_{j-1}(x'), u_j(x'))$ intersects the interval $[R_1, R_2]$.
    Here we used that on $R_1(1 + \delta) < \rho(x', y) < R_2$, the integrand in \eqref{e:details} is exactly equal to what is obtained by expanding the next line, while the contribution of the small annulus $R_1 < \rho(x', y) < R_1(1 + \delta)$ is bounded by the Hardy--Littlewood maximal function of $g$ by \eqref{e:kernel size}.
    
    By Hölder's inequality and the definition of $w$, \eqref{e:details2} is bounded by
    \begin{align*}
        &\le \delta^\alpha \sup_{R > 0} \sup_{\rho(x,x') < R} \|S_{R, u}(K, 1, g)(x') \|_{V^r(u \in [R, \infty))} + C \delta^\alpha M|g|(x)\,,
    \end{align*}
    which is estimated in Proposition \ref{p:NontVar}.     
    
    The remainder of this section is devoted to assumption b). We have to show that for some constant $A$
    \begin{equation}
        \label{e:bk_upper}
        |\mathbf{K}(x,y)| \le \frac{A}{\rho(x,y)^\AD}
    \end{equation}
    and that for $2\rho(y,y') \le \rho(x,y)$
    \begin{equation}
        \label{e:bk_hol}
        |\mathbf{K}(x,y) - \mathbf{K}(x,y')| \le \Big( \frac{\rho(y,y')}{\rho(x,y)}\Big)^\beta \frac{A}{\rho(x,y)^\AD}\,.
    \end{equation}
    We write
    \begin{equation}
        \label{e:K_fact}
        \mathbf{K}(x,y) = K(x,y)\cdot \Gamma_x(\rho(x,y))
    \end{equation}
    where
    \begin{align*}
        \Gamma_x(s) = \delta^\alpha \cdot \int (\zeta^\delta)_t(s) \sum_{j=1}^{J(x)} w_j(x) \cdot  \mathbf{1}_{[u_{j-1}(x), u_j(x)]}(t) \, \mathrm{d}t\,.
    \end{align*}
    Since $K$ satisfies the two estimates \eqref{e:kernel size} and \eqref{e:kernel y smooth}, the estimates \eqref{e:bk_upper} and \eqref{e:bk_hol} for $\mathbf{K}$ with Hölder exponent $\beta = \min\{1/2,\alpha\}$ follow at once from the next lemma and \eqref{e:K_fact}.

    \begin{lemma}
        \label{l:gamma}
        It holds that
        \begin{equation}
            \label{e:g_upper}
            |\Gamma_x(s)| \le 1
        \end{equation}
        and, for all $s/2 \le s' \le s$ and $\gamma \le \min\{1/2, \alpha\}$, it holds that
        \begin{equation}
            \label{e:g_hol}
            |\Gamma_x(s) - \Gamma_x(s')| \le 8 \Big(\frac{s - s'}{s'}\Big)^\gamma.
        \end{equation}
    \end{lemma}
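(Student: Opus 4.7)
My plan is to substitute the explicit form of $\zeta^\delta$ from \eqref{e:zeta} into the definition of $\Gamma_x$ and reduce the claim to an elementary one-dimensional estimate on a logarithmically weighted integral. Writing $F(t) = \sum_{j=1}^{J(x)} w_j(x) \mathbf{1}_{[u_{j-1}(x), u_j(x)]}(t)$ and using $(\zeta^\delta)_t(s) = t^{-1} \log(1+\delta)^{-1} \mathbf{1}_{[s/(1+\delta),\, s]}(t)$, one obtains
\[
\Gamma_x(s) = \frac{\delta^\alpha}{\log(1+\delta)} \int_{s/(1+\delta)}^{s} \frac{F(t)}{t} \, \mathrm{d}t.
\]
The crucial observation is that the intervals $[u_{j-1}(x), u_j(x)]$ are pairwise disjoint, so at most one indicator is nonzero at any $t$; hence $|F(t)| \le \max_j |w_j(x)| \le \|w(x)\|_{\ell^{r'}} \le 1$. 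Throughout, I would assume $\delta \le 1$, which is the only regime relevant for Proposition \ref{p:SmoTrun}; the upper bound \eqref{e:g_upper} is then immediate:
\[
|\Gamma_x(s)| \le \frac{\delta^\alpha}{\log(1+\delta)} \int_{s/(1+\delta)}^{s} \frac{\mathrm{d}t}{t} = \delta^\alpha \le 1.
\]

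For the Hölder estimate \eqref{e:g_hol}, I would first derive a Lipschitz-type bound by writing $\Gamma_x(s) - \Gamma_x(s')$ as a weighted integral over the symmetric difference of the intervals $[s/(1+\delta), s]$ and $[s'/(1+\delta), s']$. Bounding by $|F| \le 1$ and using $\log(1+h) \le h$ together with $\log(1+\delta) \ge \delta/2$ for $\delta \le 1$, this yields
\[
|\Gamma_x(s) - \Gamma_x(s')| \le \frac{2 \delta^\alpha}{\log(1+\delta)} \log(s/s') \le 4 \delta^{\alpha-1} h, \qquad h := (s-s')/s' \in [0,1].
\]

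Finally, I would interpolate between this Lipschitz bound and the trivial bound $|\Gamma_x(s) - \Gamma_x(s')| \le 2 \delta^\alpha$ coming from \eqref{e:g_upper}, via a dyadic case split at $h = \delta$. In the regime $h \le \delta$, the Lipschitz bound combined with $h^{1-\gamma} \le \delta^{1-\gamma}$ yields $4 \delta^{\alpha-\gamma} h^\gamma \le 4 h^\gamma$ using $\alpha \ge \gamma$ and $\delta \le 1$. In the regime $h > \delta$, the trivial bound combined with $\delta^\alpha \le h^\gamma$ (again using $\delta \le 1$ and $\alpha \ge \gamma$) yields $2\delta^\alpha \le 2 h^\gamma$. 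Both fall below the claimed $8 h^\gamma$ for any $\gamma \in [0,\alpha]$; the restriction $\gamma \le 1/2$ in the statement is not actually needed here. The main obstacle is the bookkeeping of powers of $\delta$ in the interpolation, but no deep input beyond the disjointness of the $u_j$-intervals and the estimate $\log(1+\delta) \sim \delta$ is required.
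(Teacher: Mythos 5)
Your proposal is correct, and it takes a genuinely different route from the paper. The paper's proof splits the index set into two classes $\mathcal{I}_1, \mathcal{I}_2$ according to whether $u_{j-1} \le s' u_j$, performs a change of variables $t\mapsto t/s'$ on the $\mathcal{I}_2$ part, applies Jensen's inequality with exponent $1/\gamma$, and then invokes the embedding $\|w\|_{\ell^{1/\gamma}}\le\|w\|_{\ell^{r'}}\le 1$ — which is where the hypothesis $\gamma\le 1/2$ (so that $1/\gamma\ge 2 > r'$) enters. You instead observe that the intervals $[u_{j-1},u_j]$ are consecutive and hence essentially disjoint, so the pointwise bound $|F(t)|\le\max_j|w_j|\le\|w\|_{\ell^{r'}}\le 1$ holds a.e.; from there the explicit form of $\zeta^\delta$ reduces $\Gamma_x(s)-\Gamma_x(s')$ to a weighted integral over the symmetric difference of $[s/(1+\delta),s]$ and $[s'/(1+\delta),s']$, giving a Lipschitz-type bound $\lesssim\delta^{\alpha-1}(s-s')/s'$, which you interpolate against the trivial sup bound $2\delta^\alpha$ via a case split at $h=\delta$. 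This is more elementary (no Jensen, no rearrangement of the sum), the constants are slightly better than $8$, and — as you correctly note — the restriction $\gamma\le 1/2$ is not needed: only $\gamma\le\alpha$ enters. Your reliance on $\delta\le 1$ is consistent with the paper, which implicitly makes the same assumption (the paper's own bound for \eqref{e:g_upper} yields $\delta^\alpha$, not $1$, when $\delta>1$), and it is harmless since the optimization in Section \ref{s:ShTruformSmoTru} chooses $\delta=\min\{1,J_0^{-\nu}\}\le 1$.
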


    \begin{proof}
    We consider $x, \delta$ fixed and drop them from the notation. Both estimates \eqref{e:g_upper} and \eqref{e:g_hol} scale correctly, so that we may assume $s = 1$ and $s' \in [1/2, 1)$. Since $\|w\|_{\ell^{r'}} \le 1$,
    it holds in particular for all $j$ that
    \[  
        |w_j| \le 1.
    \]
    This along with \eqref{e:zeta}, \eqref{e:zeta_int} implies \eqref{e:g_upper}. To show \eqref{e:g_hol}, we split the indices $j$ into
    \[
        \mathcal{I}_1 = \{j \in \{1, \dotsc, J\} \ : \ u_{j-1} > s' u_{j}\}
    \]
    and 
    \[
        \mathcal{I}_2 = \{j \in \{1, \dotsc, J\} \ : \ u_{j-1}  \le s' u_{j}\}.
    \]
    Using the support assumption \eqref{e:zeta_supp} and $s' \in [1/2,1)$, we have (recall that $\zeta_t = \zeta_t^\delta$)
    \begin{align*}
        &\delta^{-\alpha}|\Gamma(1) - \Gamma(s')|\\
        &= \Big| \int_{1/(2+2\delta)}^{1} \zeta_t(1) \sum_{j \in \mathcal{I}_1 \cup \mathcal{I}_2} w_j \mathbf{1}_{[u_{j-1}, u_j]}(t) - \zeta_t(s') \sum_{j \in \mathcal{I}_1 \cup \mathcal{I}_2} w_j \mathbf{1}_{[u_{j-1}, u_j]}(t)\, \mathrm{d}t \Big|.
    \end{align*}
    We apply the triangle inequality, and a change of variables $t \mapsto \frac{1}{s'}t $ in the $\Gamma(s')$ integral for the $j \in \mathcal{I}_2$ summands to bound this by
    \begin{align}
        &\quad \int_{1/(1+\delta)}^{1} |\zeta_t(1)| \Big|\sum_{j \in \mathcal{I}_1} w_j \mathbf{1}_{[u_{j-1}, u_j]}(t)\Big| \, \mathrm{d}t\label{e:I11}\\
        &+\int_{1/(1+\delta)}^{1} |\zeta_t(1)| \Big|\sum_{j \in \mathcal{I}_1} w_j \mathbf{1}_{[u_{j-1}, u_j]}(\frac{1}{s'}t)\Big| \, \mathrm{d}t\label{e:I12}\\
        &+ \int_{1/(1+\delta)}^{1} |\zeta_t(1)| \Big|\sum_{j \in \mathcal{I}_2} w_j \mathbf{1}_{[u_{j-1}, u_j]}(t) -  \sum_{j \in \mathcal{I}_2} w_j \mathbf{1}_{[u_{j-1}, u_j]}(\frac{1}{s'}t )\Big|\, \mathrm{d}t \label{e:I2}.
    \end{align}
    Using \eqref{e:zeta_int} and Jensen's inequality we bound \eqref{e:I11} by 
    \[
        \Big(\int_{1/(1+\delta)}^1 |\zeta_t(1)| \Big|\sum_{j \in \mathcal{I}_1} w_j \mathbf{1}_{[u_{j-1}, u_j]}(t)\Big|^{\frac{1}{\gamma}} \, \mathrm{d}t \Big)^\gamma.
    \]
    Note that $\zeta_t(1) \le 2 \delta^{-1} t^{-1}$ by \eqref{e:zeta}. This bounds the integral in the previous line by 
    \begin{align*}
        2\delta^{-1} \int \sum_{j \in \mathcal{I}_1} \mathbf{1}_{[u_{j-1}, u_j]}(t) |w_j|^{\frac{1}{\gamma}} \, \frac{\mathrm{d}t}{t} &= \sum_{j \in \mathcal{I}_1} |w_j|^{\frac{1}{\gamma}} \log\Big( \frac{u_j}{u_{j-1}} \Big)\\
        &\le \|w_j\|_{\frac{1}{\gamma}}^{\frac{1}{\gamma}} \log\Big(\frac{1}{s'} \Big)\,.
    \end{align*}
    By assumption, $\gamma \le 1/2$ and so 
    \[
        \|w_j\|_{\frac{1}{\gamma}} \le \|w_j\|_{r'} \le 1.
    \]
    Combining the above with the estimate $\log(x) \le 1 + x$ for $x \ge 1$ bounds \eqref{e:I11} by 
    \begin{equation}
        \label{e:zeta_ub}
        2 \delta^{-\gamma} \Big(\frac{1 - s'}{s'}\Big)^\gamma \le 2 \delta^{-\alpha} \Big(\frac{1 - s'}{s'}\Big)^\gamma.
    \end{equation}
    The same argument applies to \eqref{e:I12}.
    It remains to deal with the $\mathcal{I}_2$ term \eqref{e:I2}. We  notice that for $j \in \mathcal{I}_2$ 
    \begin{equation}
        \label{e:tel}
        \big|\mathbf{1}_{[u_{j-1}, u_j]}(t) - \mathbf{1}_{[u_{j-1}, u_j]}(\frac{1}{s'}t)\big| = \mathbf{1}_{[s'u_{j-1}, u_{j-1}]}(t) + \mathbf{1}_{[s'u_j, u_j]}(t).
    \end{equation}
    Using this in \eqref{e:I2}, and subsequently doing the same Jensen's inequality and computation as for term \eqref{e:I11} also estimates \eqref{e:I2} by \eqref{e:zeta_ub}. This completes the proof. 
    \end{proof}

    \subsection{Proof of Proposition \ref{p:SmoTrun}: Averages} 
    We turn to the second inequality \eqref{e:SmoTrun2}. Following the same linearization procedure as before, our task is to estimate in $L^p$ by a constant the function
    \begin{align*}
        &\delta^{\alpha} \cdot \sup_{Q \in \mathcal{Q}} \Big| \sum_{j=1}^{J(x)} w_j(x) \cdot (A^\delta_{u_j(x)}(Q, f)(x) - A^\delta_{u_{j-1}(x)}(Q, f)(x)) \Big|\\
        &= \sup_{Q \in \mathcal{Q}} \Big|\int_{\rho(x,y) \le (1+\delta)u_{J(x)}(x)} f(y) e(Q(y)) \mathbf{A}(x,y) \, \mathrm{d}\mu(y)\Big|\,,
    \end{align*}
    where
    \begin{equation}
        \label{e:AL}
        \mathbf{A}(x,y) = \Lambda_x(\rho(x,y)),
    \end{equation}
    \[
        \Lambda_x(s) = \delta^{\alpha} \cdot \sum_{j=1}^{J(x)} w_j(x)\cdot [ u_{j}(x)^{-\AD} Z_{0,u_j(x)}^{\delta}(s) -  u_{j-1}(x)^{-\AD} Z_{0,u_{j-1}(x)}^{\delta}(s)]\,.
    \]
    As before, this is a special case of Theorem \ref{t:gen Carleson} once we verify: 
    \begin{itemize}
        \item[a)] The boundedness of the nontangential maximal operator \eqref{e:nontanbound} associated to $\mathbf{A}$ on $L^2$.
        \item[b)] Up to dividing by a constant, $\mathbf{A}$ is a one-sided $\beta$-kernel, where $\beta = \min\{1/2, \alpha\}$.
    \end{itemize}  
    Part a) is again the content of Proposition \ref{p:NontVar}, after undoing the linearizations similarly as in the singular integral case. We skip the details here. Part b) follows from \eqref{e:AL} and the next lemma. 
    \begin{lemma}
        For all $x$ and $s$
        \begin{equation}
            \label{e:ba_upper}
            |\Lambda_x(s)| \le 2^{\AD + 1} s^{-\AD}
        \end{equation}
        and for all $s/2 \le s' \le s$ and $\gamma \le 1/2$
        \begin{equation}
            \label{e:ba_hol}
            |\Lambda_x(s) - \Lambda_x(s')| \le 2^{2\AD + 2} \Big( \frac{s - s'}{s'} \Big)^\gamma  s^{-\AD}.
        \end{equation}
    \end{lemma}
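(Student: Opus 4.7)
The proof closely parallels that of the preceding lemma. Using $Z_{0,R}^\delta(s) = \int_0^\infty (\zeta^\delta)_t(s)\,\mathbf{1}_{[0,R]}(t)\,\mathrm{d}t$, first rewrite
\[
    \Lambda_x(s) = \delta^\alpha \int_0^\infty (\zeta^\delta)_t(s)\,G(t)\,\mathrm{d}t, \qquad G(t) := \sum_{j=1}^{J} w_j\bigl[u_j^{-\AD}\mathbf{1}_{[0,u_j]}(t) - u_{j-1}^{-\AD}\mathbf{1}_{[0,u_{j-1}]}(t)\bigr].
\]
This is the averages-analog of the integrand $\sum_j w_j\,\mathbf{1}_{[u_{j-1},u_j]}(t)$ appearing in the previous lemma's proof, but differs in that $G(t)$ has overlapping indicator supports and $j$-dependent weights $u_j^{-\AD}$.

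The replacement for the trivial bound $|\sum_j w_j\,\mathbf{1}_{[u_{j-1},u_j]}(t)| \le 1$ will be $|G(t)| \le 2\,t^{-\AD}$ for all $t > 0$. To see this, fix $t$ and let $j_0$ satisfy $u_{j_0 - 1} < t \le u_{j_0}$. The sum collapses to
\[
    G(t) = w_{j_0} u_{j_0}^{-\AD} + \sum_{j > j_0} w_j\bigl(u_j^{-\AD} - u_{j-1}^{-\AD}\bigr),
\]
and $|w_j| \le 1$ combined with the telescoping $\sum_{j > j_0}(u_{j-1}^{-\AD} - u_j^{-\AD}) \le u_{j_0}^{-\AD}$ yields $|G(t)| \le 2\,u_{j_0}^{-\AD} \le 2\,t^{-\AD}$ (the boundary cases $t \le u_0$ and $t > u_J$ give the same bound). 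Integrating against $(\zeta^\delta)_t(s)$, which is supported in $[s/(1+\delta), s]$ and has total mass one, and using $t^{-\AD} \le (1+\delta)^\AD s^{-\AD} \le 2^\AD s^{-\AD}$ in the relevant regime $\delta \le 1$, establishes \eqref{e:ba_upper}.

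For the Hölder estimate \eqref{e:ba_hol}, mirror the two-case split of the previous lemma. Reduce by scaling to $s = 1$ and $s' \in [1/2, 1)$, and apply the change of variable $t \mapsto s't$ in $\Lambda_x(s')$ using $(\zeta^\delta)_{s't}(s') = (s')^{-1}(\zeta^\delta)_t(1)$ to obtain
\[
    \Lambda_x(1) - \Lambda_x(s') = \delta^\alpha \int_0^\infty (\zeta^\delta)_t(1)\,[G(t) - G(s't)]\,\mathrm{d}t.
\]
Expanding $G(t) - G(s't)$ via $\mathbf{1}_{[0,u_j]}(t) - \mathbf{1}_{[0,u_j]}(s't) = -\mathbf{1}_{(u_j,\,u_j/s']}(t)$ and then splitting $j$ into $\mathcal{I}_1 = \{j : u_{j-1} \le s'u_j\}$ and $\mathcal{I}_2 = \{j : u_{j-1} > s'u_j\}$ reduces the estimate to the structure of the previous lemma's proof. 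In $\mathcal{I}_1$ the intervals $(u_j, u_j/s']$ and $(u_{j-1}, u_{j-1}/s']$ are disjoint, and Jensen's inequality applied to $\sum |w_j|^{1/\gamma}$ (valid since $\gamma \le 1/2 \le 1/r'$) gives the bound. In $\mathcal{I}_2$ the same Jensen estimate follows from an indicator identity analogous to \eqref{e:tel}. The additional factors $u_j^{-\AD}, u_{j-1}^{-\AD}$ are bounded by $(1+\delta)^\AD \le 2^\AD$ on the effective support of $t$, producing the additional $2^{2\AD+2}$ constant.

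The main obstacle is simply the extra bookkeeping for the $j$-dependent weights $u_j^{-\AD}$; once it is observed that on the effective support of $(\zeta^\delta)_t(\cdot)$ these are essentially constant (comparable to $s^{-\AD}$ up to a factor $2^\AD$), the telescoping and Jensen arguments from the proof of the previous lemma carry over with the stated constants.
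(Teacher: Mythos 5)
Your proof of the upper bound \eqref{e:ba_upper} is correct, and in fact arguably cleaner than the paper's: writing $\Lambda_x(s)=\delta^\alpha\int(\zeta^\delta)_t(s)G(t)\,\mathrm{d}t$ and establishing the pointwise bound $|G(t)|\le 2t^{-\AD}$ by partial telescoping sidesteps the paper's fundamental-theorem-of-calculus manoeuvre entirely, and the remaining step (integrate against a probability density supported in $[s/(1+\delta),s]$ and use $t^{-\AD}\le(1+\delta)^\AD s^{-\AD}$) is sound.

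The Hölder estimate \eqref{e:ba_hol}, however, has a genuine gap. After expanding $G(t)-G(s't)$ via $\mathbf{1}_{[0,u_j]}(t)-\mathbf{1}_{[0,u_j]}(s't)=-\mathbf{1}_{(u_j,u_j/s']}(t)$, you are left with a sum over intervals of the form $(u_j,u_j/s']$ and $(u_{j-1},u_{j-1}/s']$. Your claim that ``in $\mathcal{I}_1$ the intervals $(u_j,u_j/s']$ and $(u_{j-1},u_{j-1}/s']$ are disjoint'' is only true \emph{within a fixed $j$}; across different $j$'s these intervals do overlap (for example $(u_j,u_j/s']$ arises both from the $j$-th first term and the $(j+1)$-th second term, and a $j\in\mathcal{I}_1$ interval can intersect a $(k-1)$-th interval for a later $k\in\mathcal{I}_1$). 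The Jensen step in Lemma~\ref{l:gamma} relies crucially on the fact that $\{[u_{j-1},u_j]\}_j$ \emph{tile}, so that $\bigl|\sum_j w_j\mathbf{1}_{[u_{j-1},u_j]}(t)\bigr|^{1/\gamma}=\sum_j|w_j|^{1/\gamma}\mathbf{1}_{[u_{j-1},u_j]}(t)$. Your decomposition destroys the tiling, and the Jensen identity fails as stated. The $\mathcal{I}_2$ part has an additional problem: the analogue of \eqref{e:tel} does not telescope cleanly because of the $j$-dependent weights $u_j^{-\AD}$ and $u_{j-1}^{-\AD}$; writing $u_{j-1}^{-\AD}\mathbf{1}_{(u_{j-1},u_{j-1}/s']}-u_j^{-\AD}\mathbf{1}_{(u_j,u_j/s']}$ as a symmetric difference requires splitting off an extra error term $(u_{j-1}^{-\AD}-u_j^{-\AD})\mathbf{1}_{(u_j,u_j/s']}$ whose intervals again overlap with unbounded multiplicity.

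The paper avoids both issues with a different key step: it applies the fundamental theorem of calculus to $u\mapsto u^{-\AD}Z^\delta_{0,u}(s)$ to rewrite
\[
    \Lambda_x(s)=\delta^\alpha\int_0^\infty t^{-\AD}\varphi_t(s)\sum_{j}w_j\mathbf{1}_{[u_{j-1},u_j]}(t)\,\mathrm{d}t,\qquad \varphi=-\AD Z^\delta_{0,1}+\zeta^\delta,
\]
thereby carrying over the \emph{tiling} indicators $\mathbf{1}_{[u_{j-1},u_j]}$ and reducing the Hölder estimate verbatim to the argument of Lemma~\ref{l:gamma} (with the bounded weight $t^{-\AD}|\varphi_t(1)|$ replacing $|\zeta_t(1)|$). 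Your approach may well be salvageable --- one can show the $\mathcal{I}_1$ overlap multiplicity is at most $2$, and the $\mathcal{I}_2$ error term is small because the coefficients $u_{j-1}^{-\AD}-u_j^{-\AD}$ telescope --- but this requires a substantially more delicate analysis than your sketch indicates, and is not a routine adaptation of the proof of Lemma~\ref{l:gamma}.
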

    
    \begin{proof}
        We fix again $x, \delta$ and drop them from the notation.
        Define the function
        \[
            \varphi(t) := -\AD Z^\delta_{0,1}(t) + \zeta^\delta(t),
        \]
        so that 
        \begin{align*}
            t^{\AD} \frac{\mathrm{d}}{\mathrm{d}t}[t^{-\AD} Z^\delta_{0, t}(s)] = -\AD  t^{-1} Z^\delta_{0,t} (s) + \zeta^\delta_t(s) = \varphi_t(s).
        \end{align*}
        By the fundamental theorem of calculus,
        \[
            \Lambda(s) = \delta^\alpha \cdot \int_0^\infty t^{-\AD} \varphi_t(s) \sum_{j=1}^J w_j \cdot \mathbf{1}_{[u_{j-1}, u_j]}(t) \, \mathrm{d}t.
        \]
        The estimates \eqref{e:ba_upper} and \eqref{e:ba_hol} scale correctly in $s$, so we may assume $s = 1$ and $s' \in [1/2, 1)$.
        
        For \eqref{e:ba_upper}, we bound $|w_j| \le 1$ and use that $\varphi$ is supported in $[0,1+\delta]$ to obtain the estimate
        \begin{equation*}
            |\Lambda_x(1)| \le \delta^\alpha \cdot \int_{1/(1+\delta)}^\infty t^{-\AD} |\varphi_t(1)| \, \mathrm{d}t = \delta^\alpha \cdot \int_0^{1+\delta} t^{\AD - 1} |\varphi(t)| \, \mathrm{d}t\,.
        \end{equation*}
        Using \eqref{e:zeta_int} and \eqref{e:Z_prop}, this integral is bounded by
        \begin{equation}\label{e:int_est}
            (1 + \delta)^\AD \int \zeta^\delta_t(1) \, \mathrm{d}t + \int_0^{1+\delta} \AD \cdot t^{\AD - 1} \, \mathrm{d}t = 2 (1 + \delta)^\AD \le 2^{\AD + 1}, 
        \end{equation}
        which proves \eqref{e:ba_upper}.
        For \eqref{e:ba_hol}, we write as in the proof of Lemma \ref{l:gamma} 
        \begin{align*}
            &\delta^{-\alpha}\cdot |\Lambda(1) - \Lambda(s')|\\
            &\le \int_{1/(1+\delta)}^\infty t^{-\AD}|\varphi_t(1)| \Big| \sum_{j \in \mathcal{I}_1} w_j \mathbf{1}_{[u_{j-1},u_j]}(t) \Big| \, \mathrm{d}t \\
            &+ \int_{1/(1+\delta)}^\infty t^{-\AD}|\varphi_t(1)| \Big| \sum_{j \in \mathcal{I}_1} w_j \mathbf{1}_{[u_{j-1},u_j]}(\frac{1}{s'} t) \Big| \, \mathrm{d}t\\
            &+ \int_{1/(1+\delta)}^\infty t^{-\AD}|\varphi_t(1)| \Big| \sum_{j \in \mathcal{I}_2} w_j \mathbf{1}_{[u_{j-1},u_j]}(t)  - \sum_{j \in \mathcal{I}_2} w_j \mathbf{1}_{[u_{j-1},u_j]}(\frac{1}{s'}t) \Big| \, \mathrm{d}t
        \end{align*}
        The computation \eqref{e:int_est} shows that we can apply Jensen's inequality to each of these three terms as in the proof of Lemma \ref{l:gamma}, up to losing a factor $2^{\AD + 1}$. Combining this with the fact that 
        \[
            |t^{-\AD}\varphi_t(1)| \le 2^{\AD + 1} \delta^{-1}
        \]
        and then the same estimate as in the proof of Lemma \ref{l:gamma} for the result of Jensen's inequality completes the proof. 
    \end{proof}

\section{Proof of the variational Wiener--Wintner theorem for \texorpdfstring{$p \in (1,2)$}{p in (1,2)}}
\label{s:ShTruformSmoTru}

In this section we replace the smooth cutoff functions in Proposition \ref{p:SmoTrun} by indicator functions. This uses a standard approximation argument, which previously appeared for example in \cite{DOP}.

\subsection{Reduction to variation along short sequences}

We start with a dyadic pigeonholing lemma that allows us to reduce to sequences of controlled length in the definition of the variation norms. Set 
\begin{equation}
    \label{e:epsdef}
    r_- = 1 + \frac{r}{2}, 
\end{equation}
and let $\varepsilon$ be a small positive number with 
\begin{equation}
    0 < \varepsilon \le 10^{-1}\big(\frac{1}{r_-} - \frac{1}{r}\big).
\end{equation}
It will be chosen sufficiently small depending on the other parameters at the end of the proof. 

\begin{lemma}
    For all $U: (0, \infty) \times X \to \mathbb{C}$ there exists $J_0 \ge 2$ such that
    \begin{align}\label{e:epsilonvar}
        &\big\|\|U(u,x)\|_{V^r_u}\big\|_{p}\nonumber\\
        &\le C(r,\varepsilon) J_0^{-\varepsilon} \Big\|\sup_{u_0<\dots <u_{J_0}} \Big(\sum_{j=1}^{J_0} |U(u_j, x)-U(u_{j-1},x)|^{r_-}\Big)^{\frac{1}{r_-}}\Big\|_p.
    \end{align}
\end{lemma}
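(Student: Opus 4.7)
The plan is to establish the inequality by dyadic pigeonholing on the sizes of increments of $U(\cdot, x)$, carried out pointwise in $x$ and then integrated over $X$. For each $x \in X$, by measurable selection I would choose a finite partition $u_0(x) < u_1(x) < \dots < u_{N(x)}(x)$ realizing the supremum in $\|U(u,x)\|_{V^r_u}$ up to a factor of $2$. Writing $a_j(x) = U(u_j(x), x) - U(u_{j-1}(x), x)$ and $b(x) = \max_j |a_j(x)|$, I would then group the indices into dyadic shells $B_k(x) = \{j : 2^{-k-1} b(x) < |a_j(x)| \le 2^{-k} b(x)\}$ with cardinalities $N_k(x)$, so that
\[
    \|U(u,x)\|_{V^r_u}^r \le 2\, b(x)^r \sum_{k \ge 0} N_k(x) \, 2^{-kr}.
\]

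Next, for each non-empty shell $B_k$ and each $M \le \min(N_k, J_0)$, by selecting $M$ indices $i_1 < \dots < i_M$ from $B_k$ and retaining the associated endpoints $u_{i_l-1}, u_{i_l}$ (sorted and de-duplicated), I extract a sub-partition of length at most $2J_0 + 1$ whose $r_-$-variation is at least $M^{1/r_-} \cdot 2^{-k-1} b$. This yields the pointwise lower bound
\[
    \sup_{v_0 < \dots < v_{2J_0}} \Big(\sum_{i=1}^{2J_0} |U(v_i, x) - U(v_{i-1}, x)|^{r_-}\Big)^{1/r_-} \ge \min(N_k(x), J_0)^{1/r_-} \cdot 2^{-k-1} b(x).
\]
Splitting the shells into \emph{small} ones ($N_k \le J_0$) and \emph{large} ones ($N_k > J_0$), the small shells give a direct control of $N_k$ by the restricted $r_-$-variation times $2^{k r_-} b^{-r_-}$, so their contribution to $\sum_k N_k 2^{-kr}$ telescopes into a convergent geometric series via $r > r_-$. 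For the large shells, the inequality instead controls the dyadic level $2^{-k} b$ by $C J_0^{-1/r_-}$ times the restricted $r_-$-variation. Combining both cases and invoking $\varepsilon \le \tfrac{1}{10}(1/r_- - 1/r)$ yields the target factor $J_0^{-\varepsilon r}$.

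Finally, taking $L^p$-norms in $x$ of the resulting pointwise estimate and noting that passing from partitions of length $2J_0 + 1$ back to length $J_0 + 1$ costs only a constant factor (by splitting any $2J_0$-increment sequence into two $J_0$-increment halves and summing $r_-$-th powers) delivers the claimed inequality. The main obstacle will be the large-shell regime: the cardinality $N_k$ is not bounded by $J_0$ and can in principle be arbitrarily large. To circumvent this, I would combine the level bound with a further pigeonholing inside each large shell, splitting $B_k$ into consecutive blocks of length at most $J_0$ and applying the restricted $r_-$-variation bound to each block; the smallness of $\varepsilon$ relative to $1/r_- - 1/r$ then leaves just enough room for the geometric series in $k$ to converge with the desired $J_0^{-\varepsilon}$ gain.
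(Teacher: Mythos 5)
Your approach is genuinely different from the paper's and, as written, contains a gap that cannot be closed. The paper pigeonholes by \emph{rank}: it defines $\lambda_k(x)$ as the largest $\lambda$ admitting $2^k$ disjoint increments of size $\ge \lambda$, observes that the $2^k$-th through $(2^{k+1}-1)$-th largest differences of any partition are $\le \lambda_k(x)$, and so bounds $\|U(\cdot,x)\|_{V^r}^r \le \sum_k 2^k \lambda_k(x)^r$. The crucial structural move is then to pull the sum over $k$ \emph{outside} the $L^p$-norm, reduce to a geometric series, and choose $J_0 = 2^{k+1}$ where $k$ attains $\sup_k 2^{-\varepsilon k}\|2^{k/r_-}\lambda_k\|_p$. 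You instead pigeonhole by \emph{level} (dyadic shells $B_k$ of the increment magnitudes relative to $b(x)$), and you do it pointwise in $x$ before taking $L^p$-norms.

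The problem is that the level-based decomposition cannot control the shell cardinalities $N_k(x)$, and the obstacle you flag in your last paragraph is fatal, not merely technical. Consider $U(t,x)$ oscillating $N$ times by a fixed amount $1$ on disjoint intervals, independent of $x$. Then $\|U\|_{V^r} = N^{1/r}$, whereas $\sup_{v_0<\dots<v_{J_0}}(\sum |U(v_i)-U(v_{i-1})|^{r_-})^{1/r_-} = J_0^{1/r_-}$ for $J_0 \le N$. A pointwise estimate of the shape $\|U\|_{V^r} \le C J_0^{-\varepsilon} V_{J_0}$ for a fixed $J_0$ would force $N \le C' J_0^{r(1/r_- - \varepsilon)}$, which fails once $N$ is large. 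Your proposed repair---splitting a large shell $B_k$ into consecutive blocks of length $J_0$---does not help: each block produces the \emph{same} inequality $J_0^{1/r_-} 2^{-k-1}b \le V_{2J_0}$, so you still extract no upper bound on $N_k$. There is also a second, related gap you never address: your pigeonholing produces shells and cardinalities that depend on $x$, but $J_0$ in the lemma must be a single integer. In the paper's argument $J_0$ is selected only after passing to $L^p$-norms, as a maximizing dyadic scale; this is exactly what makes a single $J_0$ work and is absent from your proposal. To repair the argument you would essentially need to replace the level decomposition by a rank decomposition and move the $k$-sum outside the $L^p$-norm, which is the paper's proof.
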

\begin{proof}
For $k\in \N$, let $\lambda_k(x)$ be the supremum of all real numbers $\lambda$ such that there exists a sequence
\[u_1 < v_1 \leq u_2 < v_2 \leq \dotsb \leq u_{2^k} < v_{2^k}\]
with 
$$|U(u_j,x) - U(v_j,x)| \geq \lambda.$$ 
In any sequence $0<u_0<\dots<u_J$,
sort the differences $$ |U(u_j,x)-U(u_{j-1}, x)|$$ for $0<j\le J$
in decreasing order. Then for $2^k\le J$, 
the $2^k$-th difference is  at most $\lambda_k(x)$,
and so are all differences numbered between $2^k$ and $2^{k+1}-1$.
Thus
\begin{equation}\label{e:sortsum}
\sum_{j=1}^J|U(u_j,x)-U(u_{j-1}, x)|^{{r}}
\le \sum_{k=0}^\infty 2^k \lambda_k(x)^r. 
\end{equation}
It follows that the left-hand-side of \eqref{e:epsilonvar}
is bounded by
\[
    \Big\|\Big(\sum_{k=0}^\infty 2^k \lambda_k(x)^r\Big)^{\frac{1}{r}}\Big\|_p \le \Big\|\sum_{k=0}^\infty 2^{\frac{k}{r}}
    \lambda_k(x)\Big\|_p \le \sum_{k=0}^\infty 2^{-2\varepsilon k} \Big\| 2^{\frac{k}{r_-}}\lambda_k(x)\Big\|_p 
\]
where we use \eqref{e:epsdef}. Summing a geometric series, the previous is bounded by
\begin{equation}
    \label{e:epsilonk}
    C \sup_{k\in \N} 2^{-\varepsilon k} \Big\|  2^{\frac{k}{r_-}}\lambda_k(x)\Big\|_p . 
\end{equation}
Using the definition of $\lambda_k$, we can estimate \eqref{e:epsilonk}
by the right hand side of \eqref{e:epsilonvar} for $J_0=2^{k+1}$, where $k$ is a value that
attains the supremum in \eqref{e:epsilonk} up to possibly a factor of size 
at most $2$. 
\end{proof}

\subsection{Singular integrals}
We prove the bound \eqref{e:thm ShTru1}. 
Let $f$ be a function with $\|f\|_p = 1$.
By the monotone convergence theorem, we may restrict the supremum in $Q$ in \eqref{e:thm ShTru1} to a finite set. Then there exists a measurable function $Q: X \to \mathcal{Q}$ selecting a maximizer, and we denote
\begin{equation}\label{e:casetr}
    S(u,x) := S_u(K, Q(x), f)(x)\,, \qquad S^\delta(u,x) := S_u^\delta(K, Q(x), f)(x).
\end{equation}
Our task is to show
\begin{equation}
    \label{e:g bound}
    \|\|S(u,x)\|_{V^r_u}\|_{p}  \leq C\,. 
\end{equation}
We will compare the truncated singular integral $S(u,x)$ with its smooth version $S^\delta(u,x)$. Let
\[
    s = \frac{1 + p}{2}.
\]
\begin{lemma}
    \label{l:sing_comp}
    For all $0 < u_0 < u_1$ and $0 < \delta < 1$
    \begin{equation}
        \label{e:comp_tru}
        |S(u_0, x) - S(u_1,x) - S^\delta(u_0,x) + S^\delta(u_1,x)| \le C \delta^{1 -\frac{1}{s}} (M|f|^{s})^{\frac{1}{s}}(x). 
    \end{equation}
\end{lemma}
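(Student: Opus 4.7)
The plan is to express the difference on the left-hand side of \eqref{e:comp_tru} as a single integral in $y$ whose integrand vanishes outside two thin shells, each of width comparable to $\delta u_i$, and then to control each shell contribution by H\"older's inequality with exponent $s$.

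First, from the identity $Z^\delta_{u_0,\infty} - Z^\delta_{u_1,\infty} = Z^\delta_{u_0, u_1}$ and the definitions \eqref{e:casetr}, \eqref{e:trsdef}, \eqref{e:trsdefdelta}, I would rewrite the left-hand side of \eqref{e:comp_tru} as
\[
  \int \Phi(\rho(x,y))\, K(x,y)\, f(y)\, e(Q(x)(y))\, \mathrm{d}\mu(y), \qquad \Phi := \mathbf{1}_{(u_0, u_1]} - Z^\delta_{u_0, u_1}.
\]
From the pinching \eqref{e:Z_prop} one sees that $|\Phi|$ is pointwise bounded by $\mathbf{1}_{[u_0, u_0(1+\delta)]} + \mathbf{1}_{[u_1, u_1(1+\delta)]}$, since the indicator and the smooth cutoff agree with $1$ on the bulk interval $[u_0(1+\delta), u_1]$ and each is trivially bounded by $1$ near the endpoints (the degenerate case $u_0(1+\delta) > u_1$ only strengthens this bound).

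Second, by the kernel size bound \eqref{e:kernel size} we have $|K(x,y)| \le u_i^{-\AD}$ on each shell $\{\rho(x,y) \in [u_i, u_i(1+\delta)]\}$, reducing the claim to showing for $i \in \{0, 1\}$ that
\[
  u_i^{-\AD} \int_{u_i \le \rho(x,y) \le u_i(1+\delta)} |f(y)|\, \mathrm{d}\mu(y) \le C \delta^{1 - 1/s} M(|f|^s)(x)^{1/s}.
\]
I then apply H\"older's inequality with exponents $s$ and $s/(s-1)$: the $\AD$-regularity \eqref{e:ADregular} together with $(1+\delta)^\AD - 1 \le C\delta$ for $\delta \in (0,1)$ bounds the shell measure by $C u_i^\AD \delta$, while $B(x, 2u_i)$ has measure comparable to $u_i^\AD$, so its $|f|^s$-integral is at most $C u_i^\AD M(|f|^s)(x)$. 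Combining,
\[
  u_i^{-\AD} \bigl(C u_i^\AD \delta\bigr)^{1 - 1/s} \bigl(C u_i^\AD M(|f|^s)(x)\bigr)^{1/s} = C\, \delta^{1 - 1/s}\, M(|f|^s)(x)^{1/s},
\]
which is the desired estimate.

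I anticipate no significant obstacle: the lemma reduces to the observation that the mismatch between the sharp and smooth truncations is supported on shells of relative width $\delta$, and the H\"older exponent $s = (1+p)/2 \in (1, p)$ is chosen precisely so that the $\delta$-small shell measure pays off as $\delta^{1 - 1/s}$ while the remaining factor becomes the $L^s$-maximal function of $f$, which is admissible since $s < p$.
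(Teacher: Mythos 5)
Your proof is correct and follows essentially the same route as the paper: both reduce the mismatch between sharp and smooth truncations to integrals over two thin shells of relative width $\delta$ via \eqref{e:Z_prop}, then apply the kernel size bound \eqref{e:kernel size}, H\"older with exponent $s$, and $\AD$-regularity \eqref{e:ADregular} to extract the factor $\delta^{1-1/s}$ and the maximal function. Your explicit introduction of $\Phi$ makes the reduction more transparent but is not a substantive departure.
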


\begin{proof}
    We estimate the left hand side using the triangle inequality and \eqref{e:Z_prop} by 
    \[
        \sum_{j=0,1} \int_{u_j < \rho(x,y) < (1 + \delta)u_j} |K(x,y)| |f(y)| \, \mathrm{d}\mu(y).
    \]
    By \eqref{e:kernel size} and \eqref{e:ADregular}, this is at most
    \[
        C \sum_{j=0,1} u_j^{-\AD} \int_{u_j < \rho(x,y) < (1 + \delta)u_j} |f(y)| \, \mathrm{d}\mu(y).
    \]
    We estimate this using Hölder and \eqref{e:ADregular} by 
    \[
        C  \sum_{j=0}^1 [(1 + \delta)^\AD - 1]^{1 - \frac{1}{s}} (M |f|^s)^{\frac{1}{s}}(x) \le C \delta^{1 - \frac{1}{s}} (M |f|^s)^{\frac{1}{s}}(x). \qedhere
    \]
\end{proof}

To complete the proof of \eqref{e:g bound}, note that by Equation \eqref{e:epsilonvar}, it suffices to estimate
\[
    J_0^{-\varepsilon} \Big\|\sup_{u_0<\dots <u_{J_0}} (\sum_{j=1}^{J_0} |S(u_j, x)-S(u_{j-1},x)|^{r_-})^{\frac{1}{r_-}}\Big\|_p\,.
\]
Using the triangle inequality and Lemma \ref{l:sing_comp} for each term we estimate
\begin{align*}
    &\le J_0^{-\varepsilon} \Big\|\sup_{u_0<\dots <u_{J_0}} (\sum_{j=1}^{J_0} |S^{\delta}(u_j, x)-S^{\delta}(u_{j-1},x)|^{r_-})^{\frac{1}{r_-}}\Big\|_p\\
    &\quad + C \, J_0^{-\varepsilon + \frac{1}{r_-}} \delta^{1 - \frac{1}{s}} \|(M |f|^s)^\frac{1}{s}\|_p\,.
\end{align*}
The $\alpha$-kernel $K$ is also a $\beta$-kernel for all $\beta \le \alpha$. 
By equation \eqref{e:SmoTrun1} of Proposition \ref{p:SmoTrun} and the boundedness of the Hardy--Littlewood maximal function, the previous is hence for every $\beta \le \alpha$ further bounded by
\begin{equation}
    \label{e:optimize}
    C(\beta)[J_0^{-\varepsilon} \delta^{-\beta} + J_0^{-\varepsilon + \frac{1}{r_-}} \delta^{1 - \frac{1}{s}}].
\end{equation}
We choose the parameters as 
\[
    \delta = J_0^{-\nu}, \qquad \beta = \frac{\varepsilon}{\nu}, \qquad  \nu = \Big(-\varepsilon + \frac{1}{r_-}\Big)\frac{s}{s-1},
\]
and we choose $\varepsilon$ sufficiently small so that $\beta \le \alpha$. Note that $J_0 \ge 2$ and $\nu > 0$, hence this value of $\delta$ was admissible when applying \eqref{e:comp_tru}. 
Then \eqref{e:optimize} is bounded by $C$, which completes the proof.
 
\subsection{Averages}
The proof of the estimate  \eqref{e:thm ShTru2} in Theorem \ref{t:metric} is very similar to the proof of estimate \eqref{e:thm ShTru1}, so we only indicate the difference. We now compare the variations of the averages
\begin{equation*}
    A(u,x) := A_u(Q(x), f)(x)\,, \qquad A^\delta(u,x) := A_u^\delta(Q(x), f)(x),
\end{equation*}
and the comparison Lemma \ref{l:sing_comp} has to be replaced by the following. 

\begin{lemma}
    \label{l:avg_comp}
    For all $0 < u_0 < u_1$
    \begin{equation*}
        |A(u_0, x) - A(u_1,x) - A^\delta(u_0,x) + A^\delta(u_1,x)| \le C \delta^{1 -\frac{1}{s}} (M |f|^s)^\frac{1}{s}(x).
    \end{equation*}
\end{lemma}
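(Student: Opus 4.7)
The plan is to mimic the singular-integral comparison Lemma \ref{l:sing_comp} applied to averages: bound each first-order difference $|A(u_j, x) - A^\delta(u_j, x)|$ by $C\delta^{1 - 1/s}(M|f|^s)^{1/s}(x)$ for $j = 0, 1$ separately, and conclude by the triangle inequality.

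First I would combine the two integrals into one and write
\[
A(u_j, x) - A^\delta(u_j, x) = u_j^{-\AD} \int \psi_j(\rho(x, y))\, f(y)\, e(Q(x)(y)) \, \mathrm{d}\mu(y),
\]
with $\psi_j := \mathbf{1}_{[0, u_j]} - Z^\delta_{0, u_j}$ (after normalizing so that the constant in \eqref{e:ADregular} equals $1$, which is the standing convention; otherwise the constant is harmlessly absorbed into the $C^{-1}$ in front of the indicator). The key observation is then that by the pointwise bounds \eqref{e:Z_prop}, the function $\psi_j$ is supported in the thin annulus $A_j := [u_j/(1 + \delta),\, (1+\delta)u_j]$ and satisfies $|\psi_j| \le 1$.

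Next I would apply Hölder's inequality with exponents $s$ and $s'$ on $\{y \in X : \rho(x,y) \in A_j\}$ to obtain
\[
|A(u_j, x) - A^\delta(u_j, x)| \le u_j^{-\AD}\, \mu\bigl(\{y : \rho(x,y) \in A_j\}\bigr)^{1/s'} \Big(\int_{B(x, (1+\delta)u_j)} |f|^s \, \mathrm{d}\mu\Big)^{1/s}.
\]
From \eqref{e:ADregular}, the annular volume is bounded by $C\bigl[(1+\delta)^\AD - (1+\delta)^{-\AD}\bigr] u_j^\AD \le C\delta u_j^\AD$ for $\delta \le 1$ (which is the only regime relevant to the application in \eqref{e:optimize}), while the $L^s$-integral is bounded by $C u_j^\AD\, M|f|^s(x)$ by definition of the Hardy--Littlewood maximal operator. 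Combining these, the prefactor $u_j^{-\AD}$ cancels the two $u_j^\AD$ contributions, and the remaining power of $\delta$ is exactly $\delta^{1/s'} = \delta^{1 - 1/s}$, as desired.

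The argument is essentially identical to Lemma \ref{l:sing_comp}: the role previously played by the kernel size bound $|K(x,y)| \le \rho(x,y)^{-\AD}$ is now played by the uniform averaging normalization $u_j^{-\AD}$. The only bookkeeping difference is that the relevant annulus $A_j$ now has both an inner piece $[u_j/(1+\delta), u_j]$, coming from the slow rise of $Z^\delta_{0, u_j}$ up to the value $1$, in addition to the outer piece $[u_j, (1+\delta) u_j]$; since both pieces contribute volume of order $\delta u_j^\AD$ for $\delta \le 1$, this makes no essential change to the argument, and I do not foresee any real obstacle.
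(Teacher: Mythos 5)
Your proof is correct and follows the same route as the paper: bound $|A(u_j,x)-A^\delta(u_j,x)|$ for $j=0,1$ separately by integrating $|f|$ over a thin annulus at scale $u_j$, then apply H\"older and the definition of the maximal function, just as in the singular-integral comparison (Lemma \ref{l:sing_comp}). One small imprecision to flag: by \eqref{e:zeta_supp} and \eqref{e:zeta_int}, the cutoff $Z^\delta_{0,R}$ equals $1$ identically on $(0,R]$ and only drops from $1$ to $0$ on $(R,(1+\delta)R]$, so $\psi_j=\mathbf{1}_{[0,u_j]}-Z^\delta_{0,u_j}$ vanishes on $(0,u_j]$ and is supported, up to the measure-zero set where $\rho(x,y)=0$, in the one-sided annulus $(u_j,(1+\delta)u_j]$ alone --- there is no ``slow rise'' producing an inner piece $[u_j/(1+\delta),u_j]$. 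Your two-sided annulus is a harmless superset with the same $C\delta u_j^{\AD}$ volume bound, so the estimate goes through unchanged; just note that the picture of $Z^\delta_{0,R}$ sketched in your last paragraph does not match the definition.
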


\begin{proof}
    We directly estimate the left hand side by
    \[
        C \sum_{j=0,1} u_j^{-\AD} \int_{u_j < \rho(x,y) < (1 + \delta)u_j} |f(y)| \, \mathrm{d}\mu(y),
    \]
    from which the claim follows exactly as in the proof of Lemma \ref{l:sing_comp}.
\end{proof}

\section{Sparse bounds: Proof of Theorem \ref{t:metric} for \texorpdfstring{$p \in (1,\infty)$}{p in (1,infty)}}
\label{s:sparse}

Here we complete the proof of Theorem \ref{t:metric} by extending the range of exponents $p \in (1,2)$ proved in the previous section to $(1, \infty)$. 

\subsection{Sparse bounds on metric measure spaces}

We will apply the following special case of a result of Lorist \cite[Corollary 1.2]{Lorist2021}. 

\begin{theorem}\label{t:lorist}
    Let $Y$ be a Banach space. Let $T: L^{3/2}(X) \to L^{3/2}(X, Y)$ be a bounded linear operator. Define 
    \begin{equation}\label{e:sparse_nont}
        \mathcal{M}_T f(x) = \sup_{x\in B} \sup_{x',x'' \in B} \|T(f \mathbf{1}_{X \setminus 3B})(x') - T(f \mathbf{1}_{X \setminus 3B})(x'')\|_Y,
    \end{equation}
    where the supremum is over all balls $B$ in $X$. If $\mathcal{M}_T$ is bounded as an operator $L^{3/2}(X)\to L^{3/2}(X)$, then $T$ is bounded as an operator 
    \[
        L^p(X) \to L^p(X, Y)
    \]
    for all $p \in (3/2, \infty)$.
\end{theorem}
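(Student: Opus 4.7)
The plan is to establish a pointwise sparse domination
\[
\|Tf(x)\|_Y \le C \sum_{Q \in \mathcal{S}} \langle f \rangle_{cQ, 3/2} \, \mathbf{1}_Q(x),
\]
where $\mathcal{S}$ is a sparse family of Christ-type dyadic cubes on $X$, $c$ is a fixed dilation factor compatible with the enlargement ``$3B$'' in \eqref{e:sparse_nont}, and $\langle g\rangle_{E,q} := (\mu(E)^{-1}\int_E |g|^q\,\mathrm{d}\mu)^{1/q}$; the $L^p \to L^p(X,Y)$ bound for $p > 3/2$ then follows from the standard sparse-operator duality argument. This is essentially the Lerner--Conde-Alonso--Rey scheme adapted to the space of homogeneous type $(X,\rho,\mu)$, which is doubling by the Ahlfors regularity \eqref{e:ADregular}.

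To build $\mathcal{S}$, I would fix a Christ-type dyadic system on $X$ and, given $f \in L^{3/2}(X)$ supported in some large ball (the general case by a monotone limit), carry out a stopping-time construction. Starting from a top cube $Q_0$, set $E_{Q_0} \subset Q_0$ to be the union of the three level sets
\[
\{|f| > A\langle f\rangle_{cQ_0,3/2}\}, \quad \{\mathcal{M}_T f > A\langle \mathcal{M}_T f\rangle_{cQ_0,3/2}\}, \quad \{\|T(f\mathbf{1}_{cQ_0})\|_Y > A\langle f\rangle_{cQ_0,3/2}\}.
\]
By Chebyshev and the assumed $L^{3/2}$-boundedness of $T$ and $\mathcal{M}_T$, each set has measure at most $A^{-3/2}\mu(cQ_0) \lesssim A^{-3/2}\mu(Q_0)$; choosing $A$ large yields $\mu(E_{Q_0}) \le \mu(Q_0)/2$. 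The children of $Q_0$ in $\mathcal{S}$ are the maximal Christ subcubes of $Q_0$ contained in $E_{Q_0}$, and iterating into each child produces $\mathcal{S}$, which is sparse since $Q \setminus \bigcup \text{children}(Q)$ has measure $\ge \mu(Q)/2$.

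For the pointwise bound, on the good set $Q_0 \setminus \bigcup \text{children}$ I would split $f = f\mathbf{1}_{cQ_0} + f\mathbf{1}_{X \setminus cQ_0}$. The first piece is directly at most $A\langle f\rangle_{cQ_0,3/2}$ in $Y$-norm by construction; the second is controlled via \eqref{e:sparse_nont}, since for such $x$ and any point $x'$ in a child cube one has $\|T(f\mathbf{1}_{X\setminus cQ_0})(x) - T(f\mathbf{1}_{X\setminus cQ_0})(x')\|_Y \le \mathcal{M}_T f(x) \le A\langle \mathcal{M}_T f\rangle_{cQ_0,3/2}$, and one then iterates downward into the child. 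The passage from the sparse bound to $L^p$ boundedness for $p > 3/2$ is standard: duality against $g \in L^{p'}$ together with sparseness yields
\[
\sum_{Q \in \mathcal{S}} \langle f\rangle_{cQ,3/2}\,\langle g\rangle_Q \, \mu(Q) \lesssim \int M(|f|^{3/2})^{2/3} \cdot Mg \,\mathrm{d}\mu \lesssim \|f\|_p \|g\|_{p'},
\]
where $M$ is the Hardy--Littlewood maximal operator; this uses $M(|\cdot|^{3/2})^{2/3} : L^p \to L^p$ (valid exactly when $p > 3/2$) and $M : L^{p'} \to L^{p'}$ (valid when $p < \infty$). The main obstacle I anticipate is geometric rather than analytic: choosing the dilation factor $c$ so that Christ-cube enlargements $cQ$ align with the ``$3B$'' in \eqref{e:sparse_nont} and the splitting $f = f\mathbf{1}_{cQ} + f\mathbf{1}_{X\setminus cQ}$ telescopes cleanly into the children. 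The Banach-space structure of $Y$ plays no essential role beyond the triangle inequality in $\|\cdot\|_Y$.
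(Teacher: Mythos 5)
The paper does not prove Theorem~\ref{t:lorist}: it cites it as a special case of \cite[Corollary~1.2]{Lorist2021}, so there is no in-text proof to compare against. Your proposal is a reconstruction of Lorist's argument, which is indeed the Lacey--Lerner--Conde-Alonso--Rey sparse-domination scheme adapted to spaces of homogeneous type with a Banach-valued target, and the overall skeleton (Christ cubes, stopping-time construction, pointwise sparse bound, duality with $M_{3/2}$ and $M$) is the right one.

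There is, however, a genuine gap in the second stopping condition, and it is not merely a technicality. You stop on $\{\mathcal{M}_T f > A\langle \mathcal{M}_T f\rangle_{cQ_0,3/2}\}$, i.e.\ you compare the \emph{global} $\mathcal{M}_T f$ to its own local average. Notice that the Chebyshev estimate you then invoke is vacuous with respect to the hypothesis: $\mu(\{x \in Q_0 : \mathcal{M}_T f(x) > A\langle\mathcal{M}_T f\rangle_{cQ_0,3/2}\}) \le A^{-3/2}\mu(cQ_0)$ follows from Chebyshev alone and never uses the assumed $L^{3/2}$-boundedness of $\mathcal{M}_T$ --- a clear signal that the hypothesis is being wasted. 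The price shows up downstream: your pointwise bound on the good set comes out as $\lesssim \langle f\rangle_{cQ,3/2} + \langle\mathcal{M}_T f\rangle_{cQ,3/2}$, so the sparse operator you obtain is $\sum_Q \bigl(\langle f\rangle_{cQ,3/2} + \langle\mathcal{M}_T f\rangle_{cQ,3/2}\bigr)\mathbf{1}_Q$. In the duality step the second contribution requires $\|\mathcal{M}_T f\|_p \lesssim \|f\|_p$ for the target $p > 3/2$, which is \emph{not} assumed (only $p = 3/2$ is), so the argument does not close. The fix, as in Lorist's actual proof, is to localize the function inside the maximal operator: stop on $\{\mathcal{M}_T(f\mathbf{1}_{cQ_0}) > A\langle f\rangle_{cQ_0,3/2}\}$, with the same threshold $A\langle f\rangle_{cQ_0,3/2}$ as the other two conditions. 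Then the Chebyshev estimate genuinely uses $\|\mathcal{M}_T(f\mathbf{1}_{cQ_0})\|_{3/2} \lesssim \|f\mathbf{1}_{cQ_0}\|_{3/2}$, the pointwise coefficient stays $\lesssim \langle f\rangle_{cQ,3/2}$ throughout the iteration, and the final sparse bound is purely in terms of $\langle f\rangle_{cQ,3/2}$ as needed. Correspondingly, the recursion should track $T(f\mathbf{1}_{cQ})$ rather than $Tf$ at each cube $Q$: one writes $T(f\mathbf{1}_{cQ}) = T(f\mathbf{1}_{cQ_j}) + T(f\mathbf{1}_{cQ \setminus cQ_j})$ on each child $Q_j$, controls the second summand via $\mathcal{M}_T(f\mathbf{1}_{cQ})$ evaluated at a good reference point near $Q_j$ (using a ball $B \supset Q_j$ with $3B$ comparable to $cQ_j$), and iterates on the first; the geometric compatibility between $c$ and the factor $3$ that you correctly flag as the main obstacle is resolved exactly here.
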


We apply this with the following choice of $Y$ and $T$. Let $Y$ be the Banach space of functions $G: \mathcal{Q} \times (0, \infty) \to \mathbb{C}$ with the norm  
\[
    \| G \|_Y = \sup_{Q \in \mathcal{Q}} (|G(Q,1)| + \|G(Q, u)\|_{V^r_u}).
\]
Let $T_a$ and $T_b$ be the operators mapping into $Y$-valued functions on $X$
\[
    T_af = A_t(Q, f), \qquad\qquad
    T_bf = S_t(K, Q, f)\,.
\]
They are bounded as operators $L^{3/2}(X) \to L^{3/2}(X, Y)$, as proved in Section \ref{s:ShTruformSmoTru}. By Theorem \ref{t:lorist} it remains only to show $L^{3/2}(X)$ boundedness of $\mathcal{M}_{T_a}$ and $\mathcal{M}_{T_b}$ to establish Theorem \ref{t:metric} for the full claimed range of exponents. It follows from the next proposition.

\begin{prop}\label{p:sparse}
    There exists a constant $C$ such that
    \[
        \mathcal{M}_{T_a}f + \mathcal{M}_{T_b}f \le C (M|f|^{4/3})^{3/4},
    \]
    where $M$ is the Hardy--Littlewood maximal function.
\end{prop}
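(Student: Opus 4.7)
The plan is to establish the pointwise domination of both $\mathcal{M}_{T_a}$ and $\mathcal{M}_{T_b}$ by $C(M|f|^{4/3})^{3/4}$ via a Calder\'on--Zygmund-type decomposition combined with a dyadic $V^r$-interpolation for the truncation-boundary term. Fix $x$, a ball $B = B(x_B, r_B) \ni x$, points $x', x'' \in B$, and a modulation $Q \in \mathcal{Q}$, and set $h := f \mathbf{1}_{X \setminus 3B}$. I treat the singular-integral piece $\mathcal{M}_{T_b}$ in detail; the averaging case is parallel and indicated at the end. Define $G(Q, t) := S_t(K, Q, h)(x') - S_t(K, Q, h)(x'')$. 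Since $y \in X \setminus 3B$ and $x' \in B$ force $\rho(x', y) \ge 2 r_B$, the truncation $\rho(x', y) > t$ is automatic for $t \le 2 r_B$, so $G(Q, t)$ is constant on $(0, 2 r_B]$. It therefore suffices to bound $\|G(Q, t)\|_{V^r_t}$ restricted to $(2 r_B, \infty)$ and the point value $|G(Q, 1)|$ by $C (M|f|^{4/3})^{3/4}(x)$, uniformly in $Q$.

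Apply the decomposition $G(Q, t) = I_t + II_t$ with
\[
    I_t = \int (K(x', y) - K(x'', y)) \mathbf{1}_{\rho(x', y) > t} h(y) e(Q(y)) \, \mathrm{d}\mu(y),
\]
\[
    II_t = \int K(x'', y) (\mathbf{1}_{\rho(x', y) > t} - \mathbf{1}_{\rho(x'', y) > t}) h(y) e(Q(y)) \, \mathrm{d}\mu(y).
\]
For $I_t$, the indicator is monotone in $t$ for each fixed $y$, so $\|I_t\|_{V^r_t} \le \|I_t\|_{V^1_t} \le \int |K(x', y) - K(x'', y)| \, |h(y)| \, \mathrm{d}\mu(y)$. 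Splitting into $\{\rho(x', y) \ge 2 \rho(x', x'')\}$ and its complement, the H\"older bound \eqref{e:kernel y smooth} with a dyadic sum in $\rho(x', y)$ handles the main region, while on the complement (contained in a bounded-radius annulus because $h$ vanishes on $3B$) the estimate $|K| \le \rho^{-\AD}$ combined with $\AD$-regularity suffices. This yields $\|I_t\|_{V^r_t} \le C Mf(x) \le C (M|f|^{4/3})^{3/4}(x)$.

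The main obstacle is $II_t$: the indicator difference is non-monotone in $t$, so $V^1$ alone diverges on $(2 r_B, \infty)$. The remedy is a dyadic interpolation $\|a\|_{V^r([A, B])}^r \le (2 \|a\|_{L^\infty([A, B])})^{r-1} \|a\|_{V^1([A, B])}$ exploiting genuine decay of $II_t$ in $t$. By the triangle inequality for $\rho$, the indicator difference vanishes outside the shell $\{y : |\rho(x'', y) - t| \le \rho(x', x'') \le 2 r_B\}$, which has $\mu$-measure $O(t^{\AD - 1} r_B)$ by $\AD$-regularity. Combining $|K(x'', y)| \le \rho(x'', y)^{-\AD}$ with H\"older's inequality of exponent $4/3$ on this shell and using the Hardy--Littlewood maximal function on the enveloping ball $B(x, Ct)$ yields $|II_t| \le C (r_B/t)^{1/4} (M|f|^{4/3})^{3/4}(x)$. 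The analogous H\"older estimate on each of the annuli $\{y : 2^k < \rho(x', y) \le 2^{k+1}\}$ and $\{y : 2^k < \rho(x'', y) \le 2^{k+1}\}$, together with the telescoping bound $\sum_j \mathbf{1}_{t_{j-1} < \rho(x', y) \le t_j} \le \mathbf{1}_{2^k < \rho(x', y) \le 2^{k+1}}$ valid for any finite partition of $[2^k, 2^{k+1}]$, gives $\|II_t\|_{V^1([2^k, 2^{k+1}])} \le C (M|f|^{4/3})^{3/4}(x)$.

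Feeding these into the interpolation inequality produces
\[
    \|II_t\|_{V^r([2^k, 2^{k+1}])}^r \le C (r_B/2^k)^{(r-1)/4} (M|f|^{4/3})^{3r/4}(x),
\]
and summing over $2^k \ge 2 r_B$ yields a convergent geometric series (because $r > 1$), producing $\|II_t\|_{V^r_t} \le C (M|f|^{4/3})^{3/4}(x)$. The point value $|G(Q, 1)|$ is controlled by the same arguments, and all constants are uniform in $Q, x', x'', B$, completing the bound on $\mathcal{M}_{T_b}$. For $\mathcal{M}_{T_a}$, the Calder\'on--Zygmund split is replaced by the symmetric-difference identity
\[
    A_t(Q, h)(x') - A_t(Q, h)(x'') = \mu(B(x_B, t))^{-1} \int (\mathbf{1}_{B(x', t)} - \mathbf{1}_{B(x'', t)}) h(y) e(Q(y)) \, \mathrm{d}\mu(y),
\]
valid because $\mu(B(x', t)) = \mu(B(x'', t))$ by $\AD$-regularity, and the parallel $L^\infty$ and $V^1$ bounds on the ball-symmetric-difference shell are obtained by identical H\"older/maximal-function estimates; the non-monotonicity of the symmetric difference is again the key difficulty and is handled by the same dyadic interpolation.
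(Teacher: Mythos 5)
Your decomposition $G = I_t + II_t$ — kernel difference with common truncation versus common kernel with truncation difference — is a genuinely different route from the paper. The paper instead fixes a near-maximizing sequence $u_0 < \dots < u_J$ achieving the variation norm, passes from $V^r$ to $\ell^{4/3}$ (using $r > 2$), and then splits the intervals into \emph{normal} ($u_j > u_{j-1} + u_{j-1}^\rho$) and \emph{tiny}, bounding each class separately by counting how many normal intervals fit per dyadic scale and by exploiting small annulus measure for the tiny ones. Your approach buys a cleaner conceptual picture: the monotone piece $I_t$ is killed outright by a $V^1$ bound, and the non-monotone boundary piece $II_t$ is handled by a transparent $L^\infty/V^1$ interpolation on dyadic blocks. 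Both arguments ultimately rely on the same three ingredients (kernel H\"older regularity, the $O(t^{\AD-1}r_B)$ shell measure, and a $4/3$-H\"older/maximal-function bound), so neither is strictly stronger, but yours avoids the normal/tiny dichotomy and the interval-counting.

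There is one genuine imprecision you should repair. After obtaining $\|II_t\|_{V^r([2^k,2^{k+1}])}^r \lesssim (r_B/2^k)^{(r-1)/4}(M|f|^{4/3})^{3r/4}(x)$ you ``sum over $2^k \ge 2r_B$'' and conclude a bound on $\|II_t\|_{V^r_t}$. But $\|a\|_{V^r(\cup_k I_k)}^r$ is \emph{not} bounded by $\sum_k \|a\|_{V^r(I_k)}^r$: a partition point $t_{j-1} < t_j$ may straddle several dyadic blocks, and the jumps between block endpoints are not captured by the per-block variations. One needs the standard splitting
\[
\|a\|_{V^r}^r \;\lesssim_r\; \sum_k \|a\|_{V^r(I_k)}^r \;+\; \bigl\| (a(2^k))_k \bigr\|_{V^r}^r,
\]
and then to control the second (``long-variation'') term separately, e.g.\ by $\|(a(2^k))_k\|_{V^r} \le 2 \|(a(2^k))_k\|_{\ell^r}$ together with your $L^\infty$ bound $|II_{2^k}| \lesssim (r_B/2^k)^{1/4}(M|f|^{4/3})^{3/4}(x)$, which makes the $\ell^r$ norm a convergent geometric series. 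This is easy but needs to be said. A smaller point: in the averaging case the normalization $\mu(B(\cdot,t))^{-1} = c\,t^{-\AD}$ itself varies with $t$, so writing $D(t) = c\,t^{-\AD} g(t)$ you get an extra term $|t_j^{-\AD}-t_{j-1}^{-\AD}|\,|g(t_{j-1})|$ in the $V^1$ bound, which you must handle (it does close, via the $L^\infty$ bound on $g$ and the telescoping $\sum_j|t_j^{-\AD}-t_{j-1}^{-\AD}|\lesssim 2^{-k\AD}$, but ``identical estimates'' glosses over it). With these two points patched, the argument is correct.
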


\begin{proof}
    We may take $x'' = x$ in \eqref{e:sparse_nont}, at the cost of a factor of at most $2$. We may also assume, by scaling invariance of all assumptions, that $B$ is a ball of radius $1$. 
        
   We start with $\mathcal{M}_{T_b}$. Our task is to estimate for any $Q \in \mathcal{Q}$
    \begin{equation}\label{e:var}
        \|S_u(K, Q, f \mathbf{1}_{X \setminus B(x,3)})(x') - S_u(K, Q, f\mathbf{1}_{X \setminus B(x,3)})(x)\|_{V^r_u}.
    \end{equation}
    Since $r > 2$, there exists $J$ and a sequence
    \begin{equation}\label{e:partition}
        u_0 < \dotsb < u_J
    \end{equation}
    such that \eqref{e:var} is bounded by
    \begin{equation}\label{e:var2}
         2 \Big(\sum_{j = 1}^J |S_{u_{j-1}, u_j}(K, Q, f \mathbf{1}_{X \setminus 3B})(x') - S_{u_{j-1}, u_j} (K, Q, f \mathbf{1}_{X \setminus 3B})(x)|^{4/3}\Big)^{3/4}.
    \end{equation}
    Since $f\mathbf{1}_{X \setminus 3B}$ vanishes on the ball $B(x', 2)$, we can assume that $u_0 \ge 2$. 
    
    We distinguish normal and tiny intervals in the partition \eqref{e:partition}. 
    An interval $[u_{j-1}, u_j]$ is called tiny if 
    \[
        u_j \le u_{j-1} + u_{j-1}^\rho, \qquad \rho = \max\{1 - \frac{\alpha}{2}, \frac{7}{8}\}\,,
    \]
    and it is normal if it is not tiny. Let $[u_{j-1}, u_j]$ be a normal interval. Then 
    \begin{align*}
        &|S_{u_{j-1}, u_j}(K, Q, f \mathbf{1}_{X \setminus 3B})(x') - S_{u_{j-1}, u_j} (K, Q, f \mathbf{1}_{X \setminus 3B})(x)|\\
        &=  \Big\lvert \int_{u_{j-1} < \rho(x',y) < u_j} K(x', y) e(Q(y)) f(y) \mathbf{1}_{X \setminus 3B}(y) \, \mathrm{d}\mu(y)\\
        &\qquad- \int_{u_{j-1} < \rho(x,y) < u_j} K(x, y) e(Q(y)) f(y) \mathbf{1}_{X \setminus 3B}(y) \, \mathrm{d}\mu(y)\Big\rvert
    \end{align*}
    and by the triangle inequality
    \begin{align}
        &\le \int_{u_{j-1} < \rho(x,y) < u_j} |K(x,y) - K(x', y)| |f(y)| \, \mathrm{d}\mu(y)\label{e:lvar1}\\
        &+\int_{A_{j} \cup A_{j-1}} |K(x',y)| |f(y)| \, \mathrm{d}\mu(y)\label{e:lvar2}
    \end{align}
    where 
    \begin{equation}\label{e:ajdef}
        A_j = B(x, u_{j} + 2) \setminus B(x, u_{j} - 2).
    \end{equation}
    The term \eqref{e:lvar1} is, by \eqref{e:kernel y smooth}, at most
    \begin{equation}\label{e:normal1}
        \int_{u_{j-1} \le \rho(x,y) \le u_j} \Big( \frac{\rho(x,x')}{\rho(x,y)} \Big)^\alpha \rho(x,y)^{-\AD} |f(y)| \, \mathrm{d}\mu(y) \le C u_{j-1}^{-\alpha} M|f|(x)\,.
    \end{equation}
    For the term \eqref{e:lvar2} we use that by the $\AD$-dimensionality condition 
    \begin{equation}\label{e:smallAj}
        \mu(A_j) \le C \, u_j^{\AD - 1}.
    \end{equation}
    With Hölder's inequality and \eqref{e:kernel size}, it follows that the term \eqref{e:lvar2} is bounded by 
    \begin{equation}\label{e:normal2}
        C \sum_{i=0}^1  \Big(\frac{\mu(A_{j-i})}{u_{j-i}^\AD}\Big)^{1/4} \Big(\frac{1}{u_{j-i}^\AD} \int_{A_{j-i}} |f|^{4/3} \, \mathrm{d}\mu \Big)^{3/4} \le C \sum_{i=0}^1 u_{j-i}^{-1/4} (M|f|^{4/3})^{3/4}\,.
    \end{equation}
    There are at most $\min\{2^{k\alpha/2}, 2^{k/8}\}$ many normal intervals with $u_{j-1} \in [2^k, 2^{k+1}]$. Combined with \eqref{e:normal1} and \eqref{e:normal2}, it follows that the contribution of all normal intervals to \eqref{e:var2} is bounded by 
    \[
        C (M|f|^{4/3})^{3/4}\,.
    \]
    
    We turn to the tiny intervals. By $\AD$-dimensionality of $X$ and tininess
    \[
        \mu(\{y \ : \ u_{j-1} \le \rho(z, y) \le u_j\}) \le C (u_j - u_{j-1}) u_j^{\AD - 1} \le C u_{j-1}^{\AD + \rho - 1}\,.
    \]
    With Hölder, it follows that for $z \in \{x, x'\}$ and tiny $[u_{j-1}, u_j]$
    \begin{align*}
        &|S_{u_{j-1}, u_j}(K, Q, f \mathbf{1}_{X \setminus 3B})(z)|\\
        &\le C \Big(u_{j-1}^{\rho -1}\Big)^{1/4} \Big(\int_{u_{j-1} \le \rho(z, y) \le u_j} u_{j-1}^{-\AD} |f|^{4/3} \, \mathrm{d}\mu \Big)^{3/4}\,.
    \end{align*}
    Thus, the contribution of all tiny intervals to \eqref{e:var} is at most
    \begin{align*}
        &C \Big( \sum_{z \in \{x,x'\}} \sum_{\mathrm{tiny}} \int_{u_{j-1} \le \rho(z, y) \le u_j} u_{j-1}^{-\AD + (\rho - 1)/3} |f|^{4/3} \, \mathrm{d}\mu\Big)^{3/4} \\
        &\le C (M |f|^{4/3})^{3/4}(x)\,.
    \end{align*}
    Here we used that $u_{j-1} \sim u_j$, by tininess. This completes the proof for $\mathcal{M}_{T_b}$. 

    For $\mathcal{M}_{T_a}$ we proceed similarly. Now we have the simpler estimate
    \begin{align}
        &|A_{u_{j-1}, u_j}(Q, f \mathbf{1}_{X \setminus 3B})(x') - A_{u_{j-1}, u_j} (Q, f \mathbf{1}_{X \setminus 3B})(x)|\nonumber\\
        &\le \sum_{i = 0}^1 u_{j - i}^{-\AD} \int_{A_{j - i}} |f(y)| \, \mathrm{d}\mu\,,\label{e:lvar2_a}
    \end{align}
    where $A_j$ is defined in \eqref{e:ajdef}. This is the same as in the singular integral case for the term \eqref{e:lvar2}, so the contribution of the normal intervals can be controlled as shown there. 

    For tiny intervals, we use that 
    \begin{align*}
        &|A_{u_{j-1}, u_j}(f\mathbf{1}_{X \setminus 3B})(z)|\\
        &\le u_j^{-\AD} \int_{u_{j-1} \le \rho(z,y) \le  u_j} |f(y)| \, \mathrm{d}\mu + (u_{j-1}^{-\AD} - u_j^{-\AD}) \int_{\rho(z,y) \le u_{j-1}} |f(y)| \, \mathrm{d}\mu\,.
    \end{align*}
    For the first term, we argue again as in the singular integral case. The second summand is bounded by
    \[
        \AD \frac{u_j - u_{j-1}}{u_{j-1}} M|f|(x)\,.
    \]
    Taking now an $\ell^{4/3}$ sum in $j$ yields at most $M|f|(x)$ times the factor
    \begin{align*}
        &\quad \AD \Big(\sum_{\mathrm{tiny}} \Big(\frac{u_{j} - u_{j-1}}{u_{j-1}} \Big)^{4/3} \Big)^{3/4} \\
        &\le \AD \Big(\sum_{k \ge 1} 2^{k (\rho / 3 - 4/3)} \sum_{2^{k-1} \le u_{j-1} < 2^k, \mathrm{tiny}} |u_j - u_{j-1}| \Big)^{3/4}\\
        &\le \AD \Big(\sum_{k \ge 1} 2^{k (\rho / 3 - 4/3 + 1)} \Big)^{3/4}  \le C \,.
    \end{align*}
    This completes the proof.
\end{proof}

\section{Specification to homogeneous Lie groups}\label{s:lie}
 
Here we prove Corollary~\ref{c:hom Lie}, by verifying that homogeneous Lie groups satisfy the properties listed in Section \ref{ss:setup results}. 

Fix the homogeneous Lie group $G = (\R^n, \circ, \delta_\lambda)$ with group law $\circ$ and dilations $\delta_\lambda$ as in Section \ref{ss:lie}. Further fix a degree $d \ge 1$ and denote by $\mathcal{Q}$ the collection of unital, i.e. satisfying $Q(e) = 0$, measurable Leibman polynomials on $G$ of degree at most $d$. Note that Corollary~\ref{c:hom Lie} for unital polynomials is equivalent to the stated version with all polynomials, as adding a constant to the polynomial does not change the absolute value inside the suprema in \eqref{e:hom lie1} and \eqref{e:hom lie2}.
In what follows, all constants are allowed to depend on the group $G$ and on the degree $d$. The homogeneous dimension of $G$ is $\AD$.

\begin{lemma}\label{l: comp ball}There exists a constant $C_1$ such that for all $\lambda > 0$
    \[
        [-C_1^{-1}, C_1^{-1}]^n \subset \delta_{\lambda^{-1}} B(0, \lambda) \subset  [-C_1, C_1]^n.
    \]
\end{lemma}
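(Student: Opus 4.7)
The plan is to reduce the lemma to the single scale $\lambda=1$ by the homogeneity of the quasi-metric $\rho$, and then to observe that at $\lambda=1$ the ball $B(0,1)$ coincides with the Euclidean ball $U_\epsilon$, for which the two-sided cube containment is immediate.

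First I would verify the scaling identity
\[
    \delta_{\lambda^{-1}} B(0,\lambda) = B(0,1).
\]
From the definition \eqref{e:def homgp rho}, for any $y \in G$,
\[
    \rho(\delta_\lambda x, 0) = \inf\{\mu > 0 : \delta_{\mu^{-1}}\delta_\lambda x \in U_\epsilon\} = \inf\{\mu > 0 : \delta_{\lambda/\mu} x \in U_\epsilon\},
\]
and the substitution $\nu = \mu/\lambda$ identifies this with $\lambda\rho(x,0)$. Hence $y \in B(0,\lambda)$ if and only if $\delta_{\lambda^{-1}} y \in B(0,1)$, which gives the claimed equality.

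Next, by the cited result of Hebisch--Sikora recalled in Section \ref{ss:lie}, $B(0,1) = U_\epsilon = \{x \in \R^n : \sum x_j^2 < \epsilon^2\}$. A Euclidean ball of radius $\epsilon$ satisfies
\[
    [-\epsilon/\sqrt{n},\ \epsilon/\sqrt{n}]^n \subset U_\epsilon \subset [-\epsilon, \epsilon]^n,
\]
so choosing $C_1 = \max(\sqrt{n}/\epsilon,\ \epsilon)$ and combining with the scaling identity above completes the argument for all $\lambda > 0$.

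There is no real obstacle here: the only subtle point is checking that the infimum defining $\rho$ transforms correctly under dilations, and this is a direct change of variables. The lemma is essentially a bookkeeping statement that converts the group-theoretic control of $B(0,1)$ provided by \cite{hebisch1990smooth} into a scale-invariant comparison with Euclidean cubes, which is the form needed in later sections when one wishes to compare integrals over $\rho$-balls with integrals over boxes on which Leibman polynomials are classical polynomials.
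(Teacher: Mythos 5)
Your proof is correct and follows essentially the same approach as the paper's one-line argument: reduce to $\lambda=1$ by the homogeneity of $\rho$ under the dilations, then compare $B(0,1)=U_\epsilon$ with cubes. One small quibble: since $U_\epsilon$ is the open Euclidean ball, the closed cube $[-\epsilon/\sqrt{n},\epsilon/\sqrt{n}]^n$ has its corners on $\partial U_\epsilon$ and is not strictly contained in it, so you should take $C_1$ slightly larger (e.g.\ $C_1=2\max(\sqrt{n}/\epsilon,\epsilon)$) to make the first inclusion valid.
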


\begin{proof}
    We may pick $C_1$ such that this holds for $\lambda  = 1$, but by homogeneity of the metric the expression in the middle does not depend on $\lambda$.
\end{proof}

Using Lemma \ref{l: comp ball}, we verify properties (1) to (5) of the metrics $d_B$. 

\begin{lemma}
    The collection $\mathcal{Q}$ forms a compatible collection of functions on $G$.
\end{lemma}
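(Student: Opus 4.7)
The plan is to verify the five defining properties of a compatible collection in turn. The overarching strategy rests on two facts recalled in Subsection \ref{ss:lie}: real-valued Leibman polynomials on the homogeneous Lie group $G$ are classical polynomials on $\mathbb{R}^n$ of uniformly bounded classical degree $D = D(d, G)$, so $\mathcal{Q}$ is a finite-dimensional real vector space; and the combination of right-invariance of $\rho$ with the homogeneity $\rho(\delta_\lambda x, \delta_\lambda y) = \lambda \rho(x, y)$ allows any ball $B(x, R)$ to be bijectively parametrised by the unit ball via the map $\Phi_{x, R}(u) := \delta_R(u) \circ x$. Because the group law and the dilations are polynomial maps on $\mathbb{R}^n$, composition with $\Phi_{x, R}$ takes classical polynomials of degree $\le D$ to classical polynomials of degree $\le D'$ for some $D' = D'(D, G)$, uniformly in $(x, R)$.

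First I would dispatch the easy items. For (1) take $x_0 = 0$, the identity of $G$, which works since every $Q \in \mathcal{Q}$ is unital. For (2) the only nontrivial axiom is that $d_B(f, g) = 0$ forces $f = g$: vanishing means $f - g$ is constant on $B$; by Lemma \ref{l: comp ball}, $B$ contains a Euclidean box of positive diameter, so the classical polynomial $f - g$ of degree $\le D$ is constant on all of $\mathbb{R}^n$, and $(f - g)(0) = 0$ closes the argument. For (5), after pulling back by $\Phi_{x, R}$ the seminorm $d_B$ becomes, up to the linear isomorphism of $\mathcal{Q}$ induced by $\Phi_{x, R}$, a norm on a finite-dimensional space, and in any finite-dimensional normed space a ball of radius $2R$ is covered by a bounded number of balls of radius $R$, with the bound depending only on the dimension; the dimension of $\mathcal{Q}$ is an absolute constant depending only on $(G, d)$.

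The substantive content is (3) and (4), and both reduce after reparametrisation to oscillation estimates for classical polynomials of degree $\le D'$ on bounded subsets of $\mathbb{R}^n$. For (3), right-invariance yields $B_2 \subset B(x_1, 4R)$ whenever $x_1 \in B_2$, so it suffices to compare $d_{B(x_1, R)}$ with $d_{B(x_1, 4R)}$; the map $\Phi_{x_1, R}$ turns this into comparing $\operatorname{osc}_{B(0, 1)}$ and $\operatorname{osc}_{B(0, 4)}$ on the finite-dimensional space of classical polynomials of degree $\le D'$ modulo constants, where equivalence of norms finishes the job. For (4), I reparametrise instead by $\Phi_{x_2, CR}$: a direct computation using right-invariance shows that $B_1 = B(x_1, R)$ pulls back to a $\rho$-ball $B(u_1, 1/C) \subset B(0, 2)$ with $u_1 \in B(0, 1)$, while $B_2$ pulls back to $B(0, 1)$. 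The factor-of-two improvement is obtained by combining a mean value inequality $\operatorname{osc}_{B(u_1, 1/C)}(\tilde h) \lesssim C^{-\sigma_1} \sup_{B(0, 2)} |\nabla \tilde h|$ (the Euclidean diameter of a $\rho$-ball of radius $r \le 1$ is controlled by $r^{\sigma_1}$ via Lemma \ref{l: comp ball}) with the finite-dimensional bound $\sup_{B(0, 2)} |\nabla \tilde h| \le C(D') \operatorname{osc}_{B(0, 1)}(\tilde h)$; choosing the constant $C$ in (4) large enough delivers the inequality $2 d_{B_1} \le d_{B_2}$.

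The main obstacle is the bookkeeping around the reparametrisation: one has to verify that $h \circ \Phi_{x, R}$ is again a classical polynomial with degree bounded uniformly in $(x, R)$ (this uses the polynomial nature of both $\circ$ and $\delta_\lambda$, together with the uniform classical degree bound for elements of $\mathcal{Q}$), and to confirm that the pullback of $B_1$ under $\Phi_{x_2, CR}$ is exactly a $\rho$-ball of radius $1/C$ (for which right-invariance $B(a, r) \circ b = B(a \circ b, r)$ and the dilation identity $\delta_\mu(B(0, r)) = B(0, \mu r)$ are both needed). Once this is in place, every remaining comparison becomes a finite-dimensional statement about oscillations of classical polynomials of bounded degree on bounded subsets of $\mathbb{R}^n$, and equivalence of norms does all the work.
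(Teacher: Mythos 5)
Your proof is correct and follows the same overall strategy as the paper: translate and dilate so that everything lives on a fixed compact set, identify $\mathcal{Q}$ (via the uniform bound $d_\R = D$ on classical degree) with a fixed finite-dimensional space of classical polynomials, and invoke equivalence of norms. The normalization via the map $\Phi_{x,R}(u)=\delta_R(u)\circ x$ is a slightly more systematic packaging of the paper's ad hoc ``assume $R=1$ and the center is $0$'' reductions, and you correctly note the one nontrivial bookkeeping point (that precomposition with $\Phi_{x,R}$ preserves the class of classical polynomials of uniformly bounded degree, which indeed follows since $\circ$ and $\delta_\lambda$ are polynomial, or alternatively since dilations and right translations preserve Leibman degree). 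Your treatment of (1), (2), (3), and (5) matches the paper's.

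The one genuinely different ingredient is property (4). The paper proves the separation inequality $2C_2\sup_{[-C_1,C_1]^n}|p|\le\sup_{\delta_{2C_3}([-C_1^{-1},C_1^{-1}]^n)}|p|$ by citing multivariate Lagrange interpolation to control the coefficients of $p$ by its values on a cuboid, in effect exploiting that the dilation $\delta_\lambda$ acts on the space of nonconstant polynomials vanishing at the origin with all eigenvalues at least $\lambda$ (since every $\sigma_i\ge 1$), and then reduces the general center $x_2$ to $x_2=0$ by invoking the already-proved property (3). You instead pull back by $\Phi_{x_2,CR}$ directly (handling the center in one stroke), observe via Lemma \ref{l: comp ball} and right-invariance that the pullback of $B_1$ is a $\rho$-ball of radius $1/C$ with Euclidean diameter $\lesssim C^{-\sigma_1}$, and combine the Euclidean mean value inequality with the finite-dimensional bound $\sup_{B(0,2)}|\nabla\tilde h|\lesssim\operatorname{osc}_{B(0,1)}\tilde h$ (both sides being norms on polynomials mod constants). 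Both arguments ultimately extract the same $C^{-\sigma_1}$ gain from the dilation structure, with $\sigma_1\ge 1$; yours is the ``upper bound on oscillation over small balls'' formulation and the paper's is the dual ``lower bound on growth under dilation'' formulation, and neither needs property (3) as a prerequisite while the paper's does. One small point worth making explicit in your version: the Euclidean diameter bound for $B(u_1,1/C)=B(0,1/C)\circ u_1$ requires the right-translation $x\mapsto x\circ u_1$ to be Euclidean-Lipschitz uniformly for $u_1$ in the unit ball, which holds because the group law is polynomial and the relevant sets are compact.
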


\begin{proof} 
    Property (1) holds for unital polynomials by definition with $x_0 = e$. Property (2) holds because a polynomial that is constant on any ball must be constant.

    We turn to property (3). By right invariance and scaling, we may assume $x_1=0$ and $d_{B_1}(f,g)=1$ and $R = 1$. Then $B_2 \subset B(0, 4)$. 
    Set $h=f-g$. By Lemma \ref{l: comp ball}, it suffices to show that there exists $C_2 > 0$ so that
    \begin{equation}\label{e:prop3proof}
        \sup_{\delta_{4}([-C_1, C_1]^n)} |h| \le C_2 \sup_{[-C_1^{-1}, C_1^{-1}]^n} |h|.
    \end{equation}
        Recall that $h$ is a classical polynomial on $\mathbb{R}^n$ of bounded degree $d_\R$ depending on $d$ and $G$. Inequality \eqref{e:prop3proof} then holds for some $C_2$ because all norms on the finite dimensional vector space of such polynomials are equivalent.

    We turn to property (4). We may again assume that $x_1 = 0$. Using a multivariate Lagrange interpolation formula (cf.~\cite[Theorem~3.1]{nicolaides1972class}), the coefficients of a polynomial can be determined linearly from its values on any cuboid. Hence, there exists a constant $C_3$ such that for every polynomial $p$ of degree $d_\R$ 
    \[
        2C_2 \sup_{[-C_1, C_1]^n} |p| \le  \sup_{\delta_{2C_3}( [-C_1^{-1}, C_1^{-1}]^n)} |p|\,.
    \]
    By Lemma \ref{l: comp ball} and dilation invariance, this implies
    \[
        2C_2 d_{B_1}(f,g) \le d_{B(0, 2C_3)}(f,g).
    \]
    Combining this with property (3) yields for all $x_2 \in B(0, C_3)$ that
    \[
        2 d_{B_1}(f,g) \le d_{B(x_2, C_3)}(f,g)\,,
    \]
    as required.

    For property (5) it suffices by Lemma \ref{l: comp ball} and dilation invariance to show that in the space $\mathcal{Q}$, every
    $L^\infty([-C_1^{-1}, C_1^{-1}]^n)$ ball of radius $2$ can be covered by at most $C_4$ many 
    $L^\infty([-C_1, C_1]^n)$
    balls of radius $1$. Both of these are just fixed norms on the finite dimensional vector space of polynomials of degree $d_\R$ on $\R^n$. All such norms are equivalent, and it is well known that they are all geometrically doubling.

    This completes the proof, with constant $C = \max\{C_2, C_3, C_4\}$.
\end{proof}

It remains to verify the cancellative condition. We deduce it from van der Corput's lemma for oscillatory integrals. A convenient form for our purposes was proved in \cite[Lemma~A.1]{zk-polynomial}. To adapt this lemma from the abelian group $\R^n$ to our context, we need the following.

\begin{lemma}\label{l:abel_holder}
    There exists a constant $C_5 > 0$ such that if $x,y \in B(0,1)$, then 
    \begin{equation}\label{e:abel_holder}
        \rho(x, x-y) \le C_5 \rho(e,y)^{1/\AD}\,,
    \end{equation}
    where $x - y$ denotes the usual abelian group law on $\R^n$.
\end{lemma}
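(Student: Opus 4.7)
The plan is to recognize $\Phi(x,y) := x \circ (x-y)^{-1}$ as a polynomial map with appropriate dilation homogeneity, to which a simple scaling argument applies. By right-invariance of $\rho$ we have $\rho(x, x-y) = \rho(e, \Phi(x,y))$, so it suffices to bound $\rho(e, \Phi(x,y))$ in terms of $u := \rho(e, y)$.

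Since the group law of $G$ is polynomial and $G$ is nilpotent, the inverse $y \mapsto y^{-1}$ is itself polynomial (one solves $y + y^{-1} + P(y, y^{-1}) = 0$ recursively in the coordinates in order of increasing dilation weight), hence $\Phi$ is a polynomial map. At $y = 0$ it vanishes: $\Phi(x, 0) = x \circ x^{-1} = 0$. The dilations $\delta_\lambda$ are group automorphisms, so $\Phi(\delta_\lambda x, \delta_\lambda y) = \delta_\lambda \Phi(x, y)$, which means each coordinate $\Phi_i$ is weighted-homogeneous of degree $\sigma_i$. Taylor expanding in the second variable,
\[
    \Phi_i(x, y) = \sum_{|\beta| \ge 1} c_{\beta, i}(x)\, y^\beta,
\]
the homogeneity constraint forces every nonzero monomial $c_{\beta, i}(x) y^\beta$ to have total weighted degree exactly $\sigma_i$, so in particular $\sum_j \beta_j \sigma_j \le \sigma_i$ and the sum is finite. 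Since $|\beta| \ge 1$ and $\sigma_j \ge 1$, we also have $\sum_j \beta_j \sigma_j \ge 1$.

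By Lemma \ref{l: comp ball}, $|x_j| \le C_1$ and $|y_j| \le C_1 u^{\sigma_j}$, so $|c_{\beta, i}(x)| \le C$ and $|y^\beta| \le C u^{\sum_j \beta_j \sigma_j} \le C u$ (using $u \le 1$, which holds since $y \in B(0,1)$). Summing the finitely many monomials gives $|\Phi_i(x, y)| \le C' u$ for every $i$. A second application of Lemma \ref{l: comp ball} yields
\[
    \rho(e, \Phi(x,y)) \le C'' \max_i |\Phi_i(x,y)|^{1/\sigma_i} \le C''' u^{1/\sigma_n},
\]
and since $\sigma_n \le \sum_i \sigma_i = \AD$ and $u \le 1$, this is bounded by $C_5 u^{1/\AD}$, proving \eqref{e:abel_holder}. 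The substantive content is the combination of $\Phi(x, 0) = 0$ with dilation homogeneity: together they force the decay $|\Phi_i(x, y)| \le C u$, and the stated Hölder exponent $1/\AD$ then follows as a convenient lower bound for the sharper exponent $1/\sigma_n$.
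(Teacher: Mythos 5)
Your proof is correct and follows the same overall architecture as the paper's: reduce via right-invariance of $\rho$ to bounding $\rho(e,\Phi(x,y))$ for $\Phi(x,y)=x\circ(x-y)^{-1}$, establish $|\Phi_i(x,y)|\lesssim\rho(e,y)$, and then convert to a $\rho$-estimate using Lemma~\ref{l: comp ball}. The only place you diverge is in the proof of the coordinatewise bound $|\Phi_i(x,y)|\lesssim\rho(e,y)$. The paper simply notes that $\Phi$ is a polynomial vanishing at $y=0$, bounds its $y$-partial derivatives on a compact set, and applies the mean value theorem along the segment from $0$ to $y$. You instead observe that $\Phi$ intertwines the dilations --- valid because $\delta_\lambda$ is both a group automorphism of $(G,\circ)$ and a linear map commuting with abelian subtraction --- and therefore that each $\Phi_i$ is weighted-homogeneous of degree $\sigma_i$; combined with $\Phi(x,0)=0$ this forces every monomial in the $y$-expansion of $\Phi_i$ to have weighted $y$-degree at least $1$, whence $|y^\beta|\lesssim\rho(e,y)$. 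Your route is more structural, exposes the scaling symmetry of $\Phi$, and gives the slightly sharper intermediate exponent $1/\sigma_n$, which you correctly relax to the stated $1/\AD$; the paper's mean-value argument is shorter and avoids tracking weights. Both are valid and need nothing beyond Lemma~\ref{l: comp ball}.
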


\begin{proof}
    By Lemma \ref{l: comp ball},
    \begin{equation}\label{e:y}
        |y_i| \le C_1\rho(e,y), \qquad i = 1, \dotsc, n\,.
    \end{equation}
    Denote
    \[
        q(x,y) = x \circ (x - y)^{-1}.
    \]
    The function $q(x,y)$ is a polynomial in $x, y$ because $G$ is a nilpotent group. Therefore, there exists a constant $C_6$ such that 
    \begin{equation}\label{e:abel_holder2}
        \max_{i,j = 1, \dotsc, n} \sup_{x, y \in [-C_1, C_1]^n} |\frac{\partial}{\partial y_i} q_j(x,y)| \le C_6\,.
    \end{equation}
    Further, we have that $q(x,0) = 0$. Combining this with right invariance of $\rho$, Lemma \ref{l: comp ball}, \eqref{e:abel_holder2} and \eqref{e:y} yields
    \begin{align*}  
        \rho(x,x-y) &= \rho( q(x,y), e)\\
        &\le C_1 \max_{j=1,\dotsc, n} |q_j(x,y)|^{1/\AD}\\
        &\le nC_1 C_6 \max_{i = 1, \dotsc, n} |y_i|^{1/\AD}\\
        &\le n C_1^2 C_6 \ (\rho(e, y))^{1/\AD}\,.
    \end{align*}
    This completes the proof.
\end{proof}

\begin{lemma}
There exists $\varepsilon > 0$ such that $\mathcal{Q}$ forms an $\varepsilon$-cancellative collection on $G$. 
\end{lemma}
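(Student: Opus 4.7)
The plan is to reduce, by right invariance and the dilation structure, to the case $B = B(0,1)$, and then to apply a classical van der Corput estimate for polynomial phases on $\R^n$, such as \cite[Lemma A.1]{zk-polynomial}.

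First I would observe that for an arbitrary ball $B = B(x_0, R)$, right invariance of $\rho$ and $\mu$ allows us to translate $x_0$ to the identity. The change of variables $x \mapsto \delta_R(x)$ then maps $B(0,R)$ to $B(0,1)$ and produces the Jacobian factor $R^{\AD}$ appearing on the right of \eqref{e:vdc cond}. Under these operations, $\|\varphi\|_{C^{0,1}(B)}$ and the value $d_B(f,g)$ are preserved (one checks this directly from the scaling $\rho(\delta_R x, \delta_R y) = R\rho(x,y)$), and both $f$ and $g$ remain Leibman polynomials of the same degree.

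Having reduced to $B(0,1)$, set $h := f - g$. As recalled in Section~\ref{ss:lie}, $h$ is a classical polynomial on $\R^n$ of some degree $d_\R$ depending only on $d$ and $G$. Combining Lemma~\ref{l:abel_holder} with Lemma~\ref{l: comp ball}, any $\rho$-Lipschitz function on $B(0,1)$ is Hölder continuous of some positive exponent $\alpha_0$ with respect to the Euclidean metric on $\R^n$: indeed, taking $y = x - x'$ in Lemma~\ref{l:abel_holder} bounds $\rho(x,x')$ by a power of the Euclidean distance via the inclusions of Lemma~\ref{l: comp ball}. After smoothly truncating $\varphi$ near the boundary of $B(0,1)$ at a small scale $\sigma$ (which costs a factor $\sigma^{-1}$ in the Lipschitz constant and produces a boundary error of order $\sigma\|\varphi\|_\infty$, using the $\AD$-regularity of $\mu$), I would apply \cite[Lemma A.1]{zk-polynomial} to the resulting smooth oscillatory integral on a Euclidean cube containing $B(0,1)$ to obtain
\[
    \Big|\int_{B(0,1)} e(h(x))\, \varphi(x)\, \mathrm{d}\mu\Big| \le C\sigma^{-1}\|\varphi\|_{C^{0,1}(B(0,1))}(1+\|h\|_{*})^{-\varepsilon_0} + C\sigma\|\varphi\|_\infty,
\]
where $\|h\|_{*}$ is the norm appearing in the van der Corput bound (e.g.\ a largest-coefficient or oscillation norm). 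Since $d_{B(0,1)}(f,g)$ is another norm on the finite-dimensional space of polynomials of degree $\le d_\R$ modulo constants, the two norms are equivalent, hence $\|h\|_{*}$ is comparable to $d_{B(0,1)}(f,g)$.

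Optimizing $\sigma$ in the above inequality (e.g.\ $\sigma = (1+d_{B(0,1)}(f,g))^{-\varepsilon_0/2}$) yields the desired bound with $\varepsilon := \varepsilon_0/2$. The main technical obstacle is converting the $\rho$-Lipschitz regularity of $\varphi$ into Euclidean regularity that is compatible with a classical van der Corput estimate; this is exactly the role of Lemma~\ref{l:abel_holder}. A secondary issue, the non-smoothness of the anisotropic boundary of $B(0,1)$, is handled by the smoothing argument and the volume estimate from $\AD$-regularity.
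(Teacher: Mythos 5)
Your overall route matches the paper's: reduce by right invariance and dilation to $B(0,1)$, pass to the classical polynomial $h=f-g$ on $\R^n$, invoke \cite[Lemma~A.1]{zk-polynomial}, and use Lemma~\ref{l:abel_holder} to translate Euclidean displacements into $\rho$-distances so that the $\rho$-Lipschitz bound on $\varphi$ becomes usable. The substantive difference is the smoothing/truncation of $\varphi$ near $\partial B(0,1)$ at scale $\sigma$ and the subsequent optimization in $\sigma$: this step is unnecessary, and it stems from reading \cite[Lemma~A.1]{zk-polynomial} as if it required a $C^1$ (or Euclidean-Lipschitz) test function. In fact that lemma is stated in modulus-of-continuity form — it bounds the oscillatory integral by $C\sup_{y\in\eta[-C_1,C_1]^n}\int|\varphi(x)-\varphi(x-y)|\,dx$ with $\eta=(1+\|h\|_*)^{-1/d_\R}$ — so no Euclidean smoothness is needed: since $\varphi$ is supported in $B\subset[-C_1,C_1]^n$, one extends the integral to the cube for free, and then Lemma~\ref{l:abel_holder} combined with the $\rho$-Lipschitz bound on $\varphi$ controls $|\varphi(x)-\varphi(x-y)|$ directly by $C\eta^{1/\AD}\|\varphi\|_{C^{0,1}}$. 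The "secondary issue" you flag (non-smooth boundary of the anisotropic ball) is therefore not an issue at all. Your version still reaches a correct bound after optimizing $\sigma$, at the cost of a spurious factor-of-two loss in the exponent and a more elaborate argument; the paper's proof gets $\varepsilon=1/(\AD d_\R)$ in one pass. The norm-equivalence step (identifying the coefficient norm $\|h\|_*$ with $d_{B(0,1)}(f,g)$ on the finite-dimensional space of polynomials mod constants) is used implicitly in the paper as well and is fine.
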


\begin{proof}
Note that on a homogeneous group, the cancellative condition \eqref{e:vdc cond} is consistent with group translations and dilations. Thus we may assume that $B$ is centered at $0$ and that $R = 1$.
Let $f,g \in \mathcal{Q}$. Recall that this implies that, after identifying $G$ with $\R^n$, they are polynomials of degree at most $d_\R$, for some $d_\R$ depending only on $G$ and $d$. Let $h = f - g$. By Lemma \ref{l: comp ball} and the support assumption on $\psi$,
\[
    \int_B e((f-g)(x)) \psi(x) \, \mathrm{d}\mu(x) = \int_{[-C_1, C_1]^n} e(h(x)) \psi(x) \, \mathrm{d}\mu(x).
\]
Since $h$ is a polynomial of degree at most $d_\R$ on $\R^n$, Lemma~A.1 of \cite{zk-polynomial} bounds the previous by
\[
     C_7 \, \sup_{y \in \eta [-C_1, C_1]^n} \int_{\R^n} |\psi(x) - \psi(x-y)| \, \mathrm{d}x, \qquad \eta = (1 + d_B(f,g))^{-1/d_\R}\,.
\]
Note that $x-y$ here denotes the abelian group law on $\R^n$. 
By Lemma \ref{l: comp ball}, and since both $\psi(x)$ and $\psi(x-y)$ are supported in sets of bounded measure, the previous is at most
\[
     C_8   \sup_{\rho(e,y) \le C_1^2 \eta} \sup_{\rho(e,x) \le 1} |\psi(x) - \psi(x- y)|\,.
\]
Without loss of generality, we may assume that $C_1^2 \eta \le 1$, otherwise the cancellative condition already follows from estimating $\psi$ pointwise by $\|\psi\|_{C^{0,1}}$. Then we  bound the previous using Lemma \ref{l:abel_holder} by
\[
    2C_8 \sup_{x \in B} \sup_{\rho(x,z) \le C_5 (C_1^2\eta)^{1/\AD}} |\psi(x) - \psi(z)| \le C_9 \eta^{1/\AD} \|\psi\|_{C^{0,1}}\,.
\]
This completes the proof with $\varepsilon = 1/(\AD d_\R)$. 
\end{proof}

\section{Polynomials into unitary groups: Corollary \ref{c:unitary}}

\label{s:quadratic}

Here we prove Corollary \ref{c:unitary}. It is a consequence of the following characterization of quadratic polynomials from nilpotent Lie groups into unitary groups, deduced from the main result of \cite{quadratic}.

\begin{prop}\label{p:abelian}
Let $G$ be a nilpotent, connected, simply connected Lie group, and let $U(n)$ be a unitary group. 
If $Q\colon G\to U(n)$ is a unital Leibman quadratic map, then $Q(G)$ is contained in a torus subgroup of $U(n)$.
\end{prop}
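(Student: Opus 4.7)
The plan is to reduce the conclusion to showing that the elements of $\phi(G)$ pairwise commute in $U(n)$. Once this is established, $\phi(G)$ is a set of commuting unitary matrices, hence simultaneously unitarily diagonalizable by the spectral theorem; any orthonormal basis that diagonalizes all of $\phi(G)$ exhibits a maximal torus $T \subset U(n)$ containing $\phi(G)$, which is the desired conclusion.

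To prove commutativity I would first observe that $\phi$ is automatically continuous. Indeed, the Leibman polynomial $\phi$ between Polish groups is Borel measurable, and by \cite[Theorem~1.5]{auto-cont}, as recalled in Section \ref{ss:lie}, this forces continuity. Since $G$ is connected and simply connected nilpotent, the exponential map $\exp \colon \mathfrak{g}\to G$ is a global diffeomorphism and the Baker--Campbell--Hausdorff formula expresses the group law polynomially in exponential coordinates. In these coordinates the Leibman quadratic identity
\[
\Delta_{g_1}\Delta_{g_2}\Delta_{g_3}\phi \equiv e,
\]
together with the unital normalization $\phi(e)=e$, becomes an explicit algebraic constraint on $\phi$.

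The core step is to invoke the main structural result of \cite{quadratic} for quadratic Leibman maps from a nilpotent group into an arbitrary group. That theorem describes such maps in terms of data involving the abelianization $G/[G,G]$ of the source and a symmetric bilinear correction; specialized to a target in $U(n)$, the quadraticity plus unitality should force $\phi(g)\phi(h)=\phi(h)\phi(g)$ for all $g,h\in G$. The heuristic reason is that a nontrivial commutator $[\phi(g),\phi(h)]$ in the non-abelian target $U(n)$ would contribute to a third-order Leibman difference via a BCH expansion, contradicting quadraticity. The main obstacle is the precise extraction of this commutativity from the classification in \cite{quadratic}; once commutativity is in hand, the passage to a torus via simultaneous diagonalization is routine.
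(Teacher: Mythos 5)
Your reduction (pairwise commutativity of $\phi(G)$ implies $\phi(G)$ lies in a torus, via simultaneous unitary diagonalization) is correct and matches the target statement. However, the core step — actually proving that $\phi(g)$ and $\phi(h)$ commute for all $g,h \in G$ — is exactly what the proposal leaves unproved, and the heuristic you offer for it is not correct. A ``nontrivial commutator contributing to a third-order Leibman difference'' is not a theorem: quadratic Leibman maps into a non-abelian group can have non-abelian image. For instance, taking $G=\mathbb{Z}$ and a $2$-step nilpotent finite target $H$ (embedded in some $U(n)$ via the regular representation), the map $n\mapsto g^n z^{\binom{n}{2}}$ with $z$ central is quadratic but its image together with iterated differences involves non-commuting elements of $H$, and concrete quadratic maps $\mathbb{Z}\to U(n)$ with non-abelian image do exist. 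So quadraticity plus unitality alone cannot force commutativity; some extra input is essential.

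The missing ingredient in the paper's argument is the \emph{divisibility} of $G$ (a consequence of $G$ being connected and simply connected, via the exponential map), together with a precise structural description of the universal object for quadratic maps. The paper factors $\phi=\psi\circ\mathrm{quad}_G$ through a group homomorphism $\psi\colon \mathrm{Pol}_2(G)\to U(n)$ (Theorem~1.2 of \cite{quadratic}), uses the short exact sequence $0\to \omega(G)\otimes_{\mathbb{Z}} G^{\mathrm{ab}} \to \mathrm{Pol}_2(G)\to G\to 0$ (Theorem~1.3 of \cite{quadratic}) to see that $\mathrm{Pol}_2(G)$ is \emph{solvable} and admits no nontrivial homomorphisms to finite groups (because both the kernel and the quotient are divisible), and then observes that the closure $K$ of $\psi(\mathrm{Pol}_2(G))$ in $U(n)$ is a compact solvable Lie group, so $K^\circ$ is a torus and $K/K^\circ$ is finite; divisibility kills the composite $\mathrm{Pol}_2(G)\to K/K^\circ$, forcing the image into the torus $K^\circ$. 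Your proposal gestures at the structural result of \cite{quadratic} but does not identify the solvability/divisibility mechanism, which is precisely what distinguishes the Lie-group setting (where the result holds) from the discrete setting (where it fails). To repair your argument you would need to make these two points explicit; as written, the proof has a genuine gap at its central step.
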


\begin{proof}
By \cite[Theorem~1.2]{quadratic}, there exists a unique homomorphism 
\[
    \psi\colon \mathrm{Pol}_2(G)\to U(n)
\]
such that $Q=\psi\circ \mathrm{quad}_G$, where 
$\mathrm{quad}_G\colon G\to \mathrm{Pol}_2(G)$ is the universal quadratic map.  
Thus it suffices to show that every homomorphism 
$\psi\colon \mathrm{Pol}_2(G)\to U(n)$ has abelian image.

By \cite[Theorem~1.3]{quadratic}, there is a short exact sequence
\[
0 \longrightarrow A:=\omega(G)\otimes_{\mathbb{Z}} G^{\mathrm{ab}}
   \longrightarrow \mathrm{Pol}_2(G)
   \longrightarrow G 
   \longrightarrow 0,
\]
where $\omega(G)$ is the augmentation ideal of $\mathbb{Z}[G]$ and where 
$A$ is abelian.  
Since $G$ is nilpotent, the extension shows that $\mathrm{Pol}_2(G)$ is a solvable group.

Next we verify divisibility.  
Because $G$ is connected and simply connected nilpotent, the exponential map 
$\exp\colon\mathfrak g\to G$ is a global diffeomorphism.  
Given $g=\exp(X)\in G$ and $m\ge1$, set $h=\exp(X/m)$.  
Then $h^m=\exp(m\cdot X/m)=g$, so $G$ is a divisible group.  
Moreover, $G^{\mathrm{ab}}\cong \mathbb{R}^k$ is a real vector space, hence divisible abelian.  
Since tensoring any abelian group with a $\mathbb{Q}$-vector space produces a
$\mathbb{Q}$-vector space, the group
\[
A=\omega(G)\otimes_{\mathbb{Z}} G^{\mathrm{ab}}
\]
is a divisible abelian group.

Let $\psi'\colon \mathrm{Pol}_2(G)\to F$ be any homomorphism into a finite group $F$.  
The restriction $\psi'|_A$ is trivial (no finite group contains nontrivial divisible subgroups),
so $\psi'$ factors through the quotient $\mathrm{Pol}_2(G)/A\cong G$.
But $G$ is divisible, hence admits no nontrivial homomorphisms to finite groups.
Thus $\psi'\equiv 1$.  
Consequently, every homomorphism $\mathrm{Pol}_2(G)\to$ (finite group) is trivial.

Now let $H=\psi(\mathrm{Pol}_2(G))$ and let $K=\overline{H}\subseteq U(n)$ be its closure.
Then $K$ is a compact, solvable Lie subgroup of $U(n)$, so its identity component
$K^\circ$ is a torus.  
Because $K$ is compact, the quotient $K/K^\circ$ is a finite group.  
The composite map
\[
\mathrm{Pol}_2(G)\xrightarrow{\psi} K \longrightarrow K/K^\circ
\]
is therefore a homomorphism into a finite group, hence trivial.
Thus $\psi(\mathrm{Pol}_2(G))\subseteq K^\circ$, which is abelian.

Hence $Q(G)=\psi(\mathrm{quad}_G(G))$ lies in a torus subgroup of $U(n)$.
\end{proof}

\begin{remark}\label{higher-degree}
For the above argument to extend to arbitrary unital Leibman polynomial maps 
$Q \colon G \to U(n)$ of degree at most $d$, it would be sufficient to know that, 
for every $d \ge 3$, the successive quotient 
\[
    \mathrm{Pol}_d(G) \big/ \mathrm{Pol}_{d-1}(G)
\]
is a divisible and solvable group.  
Indeed, under this hypothesis, one could repeat verbatim the proof for $\mathrm{Pol}_2(G)$:
every homomorphism $\mathrm{Pol}_d(G)\to$ (finite group) would vanish on the divisible kernel and on the divisible quotient, and hence would be trivial; the solvability of $\mathrm{Pol}_d(G)$ would then force the image of any homomorphism $\mathrm{Pol}_d(G)\to U(n)$ to lie in a torus and thus to be abelian.

At present, however, no explicit structural description of $\mathrm{Pol}_d(G)$ is known for $d\ge 3$ that is comparable to the formula for $\mathrm{Pol}_2(G)$ obtained in 
\cite[Theorem~1.3]{quadratic}, cf.~recent results of Alekseev and Thom \cite{at}.   
We only know that $\mathrm{Pol}_d(G)$ is characterized by the appropriate universal property (the higher-degree analogue of \cite[Theorem~1.2]{quadratic}), and this abstract description does not yet allow one to verify divisibility or solvability of the higher-degree layers.
\end{remark}

In order to apply Theorem \ref{t:avg} to Corollary \ref{c:unitary} we also need the following lifting result, which characterizes measurable Leibman polynomials with image in tori.

\begin{remark}\label{rem:lifting}
Let $G$ be a homogeneous Lie group, and let $T$ be a $k$-dimensional torus. Suppose that $P:G\to T$ is a measurable, and thus continuous \cite{auto-cont}, Leibman polynomial.

Choose an isomorphism $\psi:(S^1)^k\to T$. Since $G$ is simply connected, the map
$\psi^{-1} \circ P:G\to (S^1)^k$ admits a global continuous lift through the covering map
$$
    \mathbb R^k\to (S^1)^k,\qquad
    (x_1,\dots,x_k)\mapsto (e^{ix_1},\dots,e^{ix_k}).
$$
Thus there exist continuous functions $Q_1,\dots,Q_k:G\to \mathbb R$ such that
$$
    P(g)
    =
    \psi\bigl(e^{iQ_1(g)},\dots,e^{iQ_k(g)}\bigr).
$$
Moreover, $Q=(Q_1,\dots,Q_k)$ is itself an $\mathbb R^k$-valued Leibman polynomial, thus each $Q_j$ is a classical polynomial. 
Indeed, if $\deg P\leq d$, then
$$
    \Delta_{h_1}\cdots \Delta_{h_{d+1}}Q(g)
    \in 2\pi\mathbb Z^k.
$$
The right-hand side is a discrete lattice in $\mathbb R^k$, while the left-hand side is
continuous in $g,h_1,\dots,h_{d+1}$. Since $G$ is connected, this difference is constant,
and evaluating at $h_j=e_G$ gives zero. Thus
$$
    \Delta_{h_1}\cdots \Delta_{h_{d+1}}Q(g)=0,
$$
so each $Q_j$ is a continuous real-valued Leibman polynomial of degree at most $d$, hence a classical polynomial of degree at most $C(d)$.
\end{remark}

Corollary \ref{c:unitary} then follows by first applying Proposition \ref{p:abelian}, to deduce that up to a shift $Q$ has image in a torus. Then one applies Remark \ref{rem:lifting} to deduce that there exist real-valued polynomials $Q_1, \dotsc, Q_k$ such that
\[
    \phi(Q(g)) = \phi \circ \psi^{-1}(Q_1(g), \dotsc, Q_k(g))\,.
\]
Finally, one Fourier expands the function $\phi \circ \psi^{-1}$ on $(S^1)^k$. Note that $k \le n$, since $k$ is the dimension of a torus contained in a unitary group of dimension at most $n$. This implies that the $C^n$ function $\phi \circ \psi^{-1}$ is in the Wiener algebra, with Wiener algebra norm bounded only in terms of $n$ and $C$ from Corollary \ref{c:unitary}. Thus 
\[
    \phi(Q(g)) = \sum_{\xi \in \mathbb{Z}^k} c_\xi e(\xi_1 Q_1(g) + \dotsb + \xi_k Q_k(g))
\]
and there exists a constant $K = K(n, C)$ such that
\begin{equation}\label{e:Wiener}
    \sum_{\xi \in \mathbb{Z}^k} |c_\xi| \le K\,.
\end{equation}
Finally, one applies Theorem \ref{t:avg} with each of the polynomial phases $\xi_1 Q_1 + \dotsb + \xi_k Q_k$, and sums the resulting estimates using \eqref{e:Wiener}. This proves Corollary \ref{c:unitary}.

\appendix

\section{Transference}\label{transference}

We spell out the transference step from Corollary \ref{c:hom Lie} to
Theorem \ref{t:avg}.  In this generality the Calderón transference principle is also stated by Nevo \cite[Theorem~6.2]{nevo}. Although Nevo considers only maximal functions, the proof uses only sublinearity, the group 
$L^p$-estimate, and locality in the averaging parameter.  These
properties are unchanged when the supremum norm in the parameter is
replaced by the $r$-variation norm. 

Fix $R_0<\infty$.  For $F\in L^p(G,\mu)$, define
\[
\mathcal C_{R_0}F(a)
:=
\sup_Q
\left\|
\frac{1}{\mu(B(0,R))}
\int_{B(0,R)}
F(g\circ a)e(Q(g))\,d\mu(g)
\right\|_{V^r_{R \in (0, R_0]}},
\]
where here and below all suprema are over all measurable Leibman polynomials $Q$ of degree at most $d$.

We first note that Corollary \ref{c:hom Lie} implies, with $C$ independent of $R_0$,
\[
        \|\mathcal C_{R_0}F\|_{L^p(G,\mu)}
        \le C\|F\|_{L^p(G,\mu)},
\]
Indeed, since the metric on $G$ is right-invariant,
\[
        B(a,R)=B(0,R)\circ a \,.
\]
Moreover, since right translations
preserve Haar measure,  after the change of variables
$y=g\circ a$,
\[
\frac{1}{\mu(B(0,R))}
\int_{B(0,R)}F(g\circ a)e(Q(g))\,d\mu(g)
=
A_R(Q_a,F)(a),
\]
where
\[
        Q_a(y):=Q(y\circ a^{-1}).
\]
The function $Q_a$ is again a real-valued Leibman polynomial of degree
at most $d$.  Hence the preceding display is dominated by
the left-hand side of Corollary  \ref{c:hom Lie}.

Now let $T:G\times X\to X$ be a measure-preserving action on
$(X,\nu)$, and define
\[
\mathcal C^T_{R_0}f(x)
:=
\sup_Q
\left\|
\frac{1}{\mu(B(0,R))}
\int_{B(0,R)}
f(T^g x)e(Q(g))\,d\mu(g)
\right\|_{V^r_{R \in (0, R_0]}}.
\]Let
\[
        \Lambda_{R_0}:=\overline{B(0,R_0)}
        \qquad\text{and}\qquad
        \Omega_S:=B(0,S).
\]
For $x\in X$, define the truncated orbit function
\[
        F_{x,S}(a)
        :=
        \mathbf 1_{\Lambda_{R_0}\Omega_S}(a)\, f(T^a x),
        \qquad a\in G.
\]
If $a\in\Omega_S$, $g\in B(0,R)$, and $R\le R_0$, then
$g\circ a\in \Lambda_{R_0}\Omega_S$.  Therefore we have
\[
        \mathcal C^T_{R_0}f(T^a x)
        =
        \mathcal C_{R_0}F_{x,S}(a),
        \qquad a\in\Omega_S .
\]
Consequently,
\begin{align}
\mu(\Omega_S)\|\mathcal C^T_{R_0}f\|_{L^p(X,\nu)}^p
&=
\int_{\Omega_S}\int_X
|\mathcal C^T_{R_0}f(T^a x)|^p
\,d\nu(x)\,d\mu(a) \nonumber\\
&\le
\int_X\int_G
|\mathcal C_{R_0}F_{x,S}(a)|^p
\,d\mu(a)\,d\nu(x) \nonumber\\
&\le
C^p
\int_X\int_G |F_{x,S}(a)|^p
\,d\mu(a)\,d\nu(x) \nonumber\\
&=
C^p
\int_X\int_{\Lambda_{R_0}\Omega_S}
|f(T^a x)|^p
\,d\mu(a)\,d\nu(x)\nonumber \\
&=
C^p\mu(\Lambda_{R_0}\Omega_S)\|f\|_{L^p(X,\nu)}^p . \label{e:finite_rad}
\end{align}
Since $0\in\Lambda_{R_0}$, we have
$\Omega_S\subset\Lambda_{R_0}\Omega_S$.  On the other hand, by the
triangle inequality and right-invariance of the metric,
\[
        \Lambda_{R_0}\Omega_S\subset B(0,S+R_0).
\]
Hence, 
\[
1
\le
\frac{\mu(\Lambda_{R_0}\Omega_S)}{\mu(\Omega_S)}
\le
\frac{\mu(B(0,S+R_0))}{\mu(B(0,S))}
\longrightarrow 1
\]
as $S\to\infty$. Dividing \eqref{e:finite_rad} by $\mu(\Omega_S)$
and letting $S\to\infty$, we obtain
\[
        \|\mathcal C^T_{R_0}f\|_{L^p(X,\nu)}
        \le C\|f\|_{L^p(X,\nu)}.
\]
Finally, letting $R_0\to\infty$ and using monotone convergence gives

Theorem \ref{t:avg}. 

The same argument applies to the singular integral averages, since for
$R, R'\le R_0$ the difference of two truncated singular integral operators at radii $R, R'$, as it appears in the variation norm, only depends on the values of the
orbit function on the compact set $B(0,R_0)$.

\end{document}